\def\cal{\mathcal}
\newtheorem {pro}{Proposition}[section]
\newtheorem {thm}[pro]{Theorem}
\newtheorem {cor}[pro]{Corollary}
\newtheorem{lem}[pro]{Lemma}
\theoremstyle{definition}
 \newtheorem {rem}[pro]{Remark}
\newtheorem {dfn}[pro]{Definition}
\newtheorem {exa}[pro]{Example}
\newtheorem {step}{Step}
\newcommand{\ccn}{\mathcal{C}_n(R)}
\newcommand{\hk}{(H_k,\lambda_k)_{1 \leq k \leq b}}
\newcommand{\G}{\mathbb{G}}
\newcommand{\R}{\mathbb{R}}
\newcommand{\N}{\mathbb{N}}
\newcommand{\cc}{\mathscr{C}}\newcommand{\C}{\mathcal{C}}
\newcommand{\s}{\mathcal{S}}
\newcommand{\tim}{{t \in \R^m}}
\newcommand{\smn}{\mathcal{S}_{m+n}}
\newcommand{\xt}{{\tilde{x}}}
\newcommand{\D}{\mathcal{D}}
\newcommand{\ep}{\varepsilon}
\newcommand{\pa}{\partial}
\newcommand{\Pa}{\mathcal{P}}
\newcommand{\bou}{\mathbf{B}}
\newcommand{\sph}{\mathbf{S}}
\newcommand{\supp}{\mbox{\rm supp}}
\title[]{Regular vectors and bi-Lipschitz trivial stratifications in o-minimal structures}
\author[ G. Valette]{ Guillaume Valette}
\address[G. Valette]{Instytut Matematyki Uniwersytetu
Jagiello\'nskiego, ul. S. \L ojasiewicza 6, Krak\'ow, Poland}\email{guillaume.valette@im.uj.edu.pl}
\keywords{Lipschitz geometry, o-minimal structures, regular vectors, bi-Lipschitz triviality, stratification.}
 \thanks{Research partially supported
by the NCN grant  2021/43/B/ST1/02359.}
\subjclass[2020]{14P10, 14D06, 51F30}
\begin{document}

\begin{abstract} These notes focus on the Lipschitz geometry of sets that are definable in o-minimal structures (expanding the real field).   We show that every set which is definable in a polynomially bounded o-minimal structure admits a stratification which is locally definably bi-Lipschitz trivial along the strata. This result is obtained as a byproduct of two foregoing results of the author. The first one asserts that, given a family definable in an o-minimal structure, there is a regular vector, up to a definable family of bi-Lipschitz homeomorphisms. The second one is a bi-Lipschitz version of the famous Hardt's theorem. We give proofs of these two theorems that avoid the use of the real spectrum.  The article recalls the basic facts and the results about Lipschitz geometry that are needed to understand the proofs, providing references.
\end{abstract}
\maketitle
\begin{section}{Introduction}
The study of the Lipschitz geometry of singularities that arise in algebraic and analytic geometry began when T. Mostowski constructed stratifications of complex analytic sets that admit a Lipschitz version of Thom-Mather isotopy theorem  \cite{most}. This result was extended by A. Parusi\'nski to real analytic geometry  \cite{parureal,parusub}, and his proof was then adapted to polynomially bounded o-minimal structures \cite{lipsomin}. The Lipschitz geometry of singularities was investigated later independently by the author of the present paper \cite{vlt,link,sullivan, linfty,gvpoincare} (see \cite{livre} for a complete expository), as well as by several other authors \cite{bfg, bfgo, bgg, bnp,kdec, kp, keq} (among many others).

In \cite{vlt}, the author proved a bi-Lipschitz triviality theorem, which can be considered as a bi-Lipschitz  version of Hardt's theorem (recalled in Theorem \ref{hardt} below). These  notes provide a new proof of this theorem and then derive existence of definably locally bi-Lipschitz trivial stratifications.
We also include a short introduction to the theory of the Lipschitz geometry of sets that are definable in o-minimal structures,   providing many necessary definitions, proofs, and references.


The existence of locally bi-Lipschitz trivial stratifications and the description of the aspect of singularities occurring in tame geometry (subanalytic, semialgebraic, or o-minimal) from the metric point of view that was achieved during the four last decades  recently turned out to be valuable for applications to analysis of PDE's and geometric measure theory.
In \cite{linfty,gvpoincare}, the author relied on it so as to compute the $L^p$ cohomology of differential forms of bounded subanalytic manifolds, not necessarily compact.  More recently, these techniques turned out to be useful to study the theory of currents \cite{depauw}, as well as to investigate the Sobolev spaces of these manifolds \cite{ guillermou, poincwirt,trace, lprime}, which is valuable for applications to  the theory of PDE on domains with non Lipschitz boundary \cite{laplac}.


 The main difficulty of the proof of the bi-Lipschitz version of Hardt's theorem \cite{vlt} (see Theorem \ref{hardt} below) is the ``regular vector theorem'' \cite[Theorem $3.13$]{vlt} (see Theorem \ref{thm_proj_reg_hom_pres_familles} and Corollary \ref{cor_proj_reg_hom_pres}). This theorem asserts that, given a set $A\subset \R^n$ which is definable in an o-minimal structure, there is a definable bi-Lipschitz homeomorphism $h:\R^n\to \R^n$ such that $h(A)$ has a regular vector, which means that there is a vector such that the angle between it and all the tangent spaces to $h(A)$ (at its regular points) is bounded away from zero. We prove this result, which does not require the structure to be polynomially bounded and which is of its own interest. The regular vector theorem was actually also the key ingredient of the Lipschitz conic structure theorem in \cite{gvpoincare}, itself useful later to study Sobolev spaces of definable manifolds \cite{poincwirt,trace, lprime}, and was of service to M. Czapla  to show existence of triangulations inducing Whitney stratifications \cite{wt}.



 Quite often, especially if, like in the aforementioned applications of Lipschitz geometry,  Lipschitz isotopies are necessary, one needs a regular vector not only for one single set but for a definable family of sets. In \cite{vlt}, this is obtained by applying the regular vector theorem to the generic fiber of a family,  relying on the compactness of the Stone space of the Boolean algebra of definable sets (sometimes called the real spectrum). This has nevertheless the inconvenience to involve abstract material to which specialists of PDE or geometric measure theory may be unfamiliar, and to force to work with an o-minimal structure that expands an arbitrary real closed field, that may be non archimedean and totally disconnected, which is prone to generate technical complications.

 In the present article, we give a parameterized version of the regular vector theorem (Theorem \ref{thm_proj_reg_hom_pres_familles}) on o-minimal structures following the proof given in \cite{vlt}, but relying only on very elementary methods. We also combine it with the techniques of \cite{gvpoincare} to prove a local version, with additional properties (Theorem \ref{thm_proj_loc}), which was used in the latter article to prove the Lipschitz conic structure of subanalytic sets. We then provide a proof  of the bi-Lipschitz version of Hardt's Theorem on polynomially bounded o-minimal structures (Theorem \ref{hardt}), and derive existence of definably bi-Lipschitz trivial stratifications (Corollary \ref{cor_strat_bil_triv}).

 \noindent {\bf Some notations and definitions.}
 Throughout this article,  $m,n$, $j$, and $k$ will stand for  integers.
The origin of $\R^n$ will be denoted $0_{\R^n}$. When the ambient space will be obvious from the context, we will however omit the subscript $\R^n$. We write $e_1,\dots,e_n$ for the canonical basis of $\R^n$.

We write $|x|$  for the euclidean norm and $d(x,y)$ for the euclidean distance (and the distance to a subset $P\subset \R^n$ will be denoted by $d(x,P)$).   Given $x\in  \R^n$ and $\ep>0$, we denote by $\bou(x,\ep)$ the open ball of radius $\ep$  centered  at $x$ (for the euclidean norm). The unit sphere of $\R^n$ centered at the origin is denoted $\sph^{n-1}$.  Given a subset $A$ of $\R^n$, we respectively denote the closure and interior of $A$ by $cl(A)$ and $int(A)$, and set $\delta A=cl(A)\setminus int (A)$.

A mapping $\xi:A \to \R^k$ is said to be {\bf Lipschitz}\index{Lipschitz} if there is a constant $L$ such that for all $x$ and $x'$ in $A$: $$|\xi(x)-\xi(x')|\le L|x-x'|.$$ We say that $\xi$ is {\bf $L$-Lipschitz}\index{l-Lipschitz@$L$-Lipschitz} if we wish to specify the constant. 
The smallest nonnegative number $L$ having this property is called the {\bf Lipschitz constant of $\xi$} and is denoted $L_\xi$.
 By convention, if $A$ is empty then $\xi$ is Lipschitz and $L_\xi=0$. A mapping $\xi$ is {\bf bi-Lipschitz} if it is a homeomorphism onto its image such that $\xi$ and $\xi^{-1}$ are both Lipschitz.

 Given two functions $\zeta$ and $\xi$ on a set $A \subset
\R^n$ with $\xi \leq \zeta$ we define the {\bf closed band $[\xi,\zeta]$} as the set:
$$[\xi,\zeta]:=\{(x,y)\in A \times \R: \xi(x)\leq y \leq \zeta(x)\}.$$
The open and semi-open bands $(\xi,\zeta)$, $(\xi,\zeta]$, and $[\xi,\zeta)$, are then defined analogously. 
 
  Given a subset $B$ of $A$, we write ``$\xi\lesssim \zeta$ on $B$'' when there is a constant $C$ such that $\xi(x) \le C\zeta(x)$ for all $x\in B$. We write ``$\xi\sim \zeta$ on $B$'' or ``$\xi(x)\sim \zeta(x)$ for $x$ in $B$'' whenever both $\xi\lesssim \zeta$ and $\zeta \lesssim \xi$ hold on $B$.
\end{section}
\begin{section}{O-minimal structures}
A \textbf{structure} (expanding the field $(\mathbb{R}, + ,.)$) is a family $\s = (\s_n)_{n \in \mathbb{N}}$ such that for each $n$ the following properties hold
\begin{enumerate}
\item [(1)] $\s_n$ is a boolean algebra of subsets of $\mathbb{R}^n$,
\item[(2)] If $A \in \s_n$ then $\mathbb{R}\times A$ and $A \times \mathbb{R}$ belong to $\s_{n+1}$,
\item[(3)] $\s_n$ contains $\{x \in \mathbb{R}^n: P(x) = 0\}$, for all $P \in \mathbb{R}[X_1,\ldots, X_n]$,
\item[(4)] If $A \in \s_n$ then $\pi(A)$ belongs to $\s_{n-1}$, where $\pi: \mathbb{R}^n \to \mathbb{R}^{n-1}$ is the standard projection onto the first $(n-1)$ coordinates.
 \end{enumerate}
 A structure $\s$ is said to be \textbf{o-minimal} if in addition:
 \begin{enumerate}
 \item [(5)] Any set $A\in \s_1$ is a finite union of intervals and points.
 \end{enumerate}
A set belonging to  $\s_n$, for some $n$, is called a {\bf definable set}, and a map whose graph is in some $\s_n$ is called a {\bf definable map}.

A structure $\s$ is said to be \textbf{polynomially bounded} if for each definable function $f: \mathbb{R}\to \mathbb{R}$, there exists a positive number $a$ and an $n \in \mathbb{N}$ such that $|f(x)| < x^n$ for all $x > a$.

{\it We fix an o-minimal structure $\s$ for all this article.} It will be assumed to be polynomially bounded in section \ref{sect_triviality}. For the other sections, this assumption is unnecessary.

We refer to \cite{costeomin, vdd} for all the basic facts and definitions about o-minimal structures that we shall use all along this article, such as cell decompositions or curve selection lemma. We however recall a few definitions helpful to understand the statements of the theorems.

It is a fundamental feature of o-minimal structures that it is possible to construct a cell decomposition of $\R^n$ that is {\bf compatible} with a given arbitrary finite collection of elements of $\s_n$, in the sense that the given sets are unions of cells of this decomposition (the word ``adapted'' is used in \cite{costeomin}, instead of compatible).

 The definition of cell decompositions being inductive on the dimension of the ambient space, it is obvious that if $\C$ is a cell decomposition of $\R^n$ and if $\pi:\R^n \to \R^k$ (with $k\le n$) is the canonical projection, then $\{\pi(C):C\in \C\}$ is a cell decomposition. We will denote it by $\pi(\C)$. 

 A {\bf 
stratification} of a subset of $ \R^n$ is a finite partition of it into
definable smooth submanifolds of $\R^n$, called {\bf strata}. A {\bf stratification is compatible} with a set if this set is the union of some strata. It is well-known that we can construct Whitney $(b)$ or Verdier $(w)$ regular stratifications compatible with any given definable set \cite{taleloi, livre}.


\begin{dfn}\label{dfn_familles}
 We say that $(A_t)_{t\in\R^m}$ is a {\bf definable family of subsets of} $\R^n$ if the set 
$$A:=\bigcup_{t\in\R^m}\{t\}\times A_t$$ is a definable subset of $\R^m\times\R^n$. 

We will sometimes regard a definable subset $A\subset\R^m\times\R^n$ as a definable family,
setting for $\tim$: $$A_t:=\{x\in\R^n\,: (t,x)\in A\}.$$ 

Given two definable families $A\subset\R^m\times\R^n$ and $B\subset\R^m\times\R^k$, we say that $F_t:A_t \to B_t$, $t\in \R^m$, is a {\bf definable family of mappings} if the family of the graphs $(\Gamma_{F_t})_{t\in \R^m}$, is a definable family of subsets of $\R^{n+k}$.  We will sometimes regard a function $f:A\to \R$, $A\in \s_{m+n}$, as a family of functions $f_t:A_t\to \R$, $\tim$, setting $f_t(x):=f(t,x)$. 

A definable family of mappings $F_t:A_t\to B_t$, $\tim$ is {\bf uniformly Lipschitz (resp. bi-Lipschitz)} if there exists a constant $L$ such that
 $F_t$ is $L$-Lipschitz  (resp. $F_t$ and $F_t^{-1}$ are $L$-Lipschitz) for all $\tim$. 
\end{dfn}

Given $B\in\s_m$ and $A$ as above, we also define the {\bf restriction of $A$ to $B$}:
\begin{equation}\label{eq_rest_familles}A_B:=A \cap  (B\times \R^n).\end{equation}
                              
Define finally the {\bf $m$-support} of  $A$ by $$\supp_m(A):=\{t \in \R^m :A_t \ne \emptyset\}.$$

\begin{dfn}\label{dfn_trivialite}
Let $A\in \s_{m+n}$. We will say
that $A$ is \textbf{definably topologically trivial along} $U
\subset \R^m$ if there exist $t_0 \in U$ and a definable  
homeomorphism $H :U \times  A_{t_0}  \rightarrow A_U$, $(t,x)\mapsto (t,h_t(x))$. The mapping $h$ is then called {\bf the trivialization} of the set $A$ along $U$.
\end{dfn}

The following theorem is sometimes called ``definable Hardt's theorem'', because it is the  o-minimal counterpart of a theorem proved by R. Hardt about semialgebraic families of sets \cite{hardt}. In this theorem, by {\bf definable partition of a set}, we mean a {\it finite} partition of it into definable sets.

\begin{thm}\label{thm_hardt_C0}\cite[Theorem 5.22]{costeomin}
Given $A\in \s_{m+n}$, there
exists a definable partition of $\R^m$ such that  $A$
is definably topologically trivial along each  element of this
partition.
\end{thm}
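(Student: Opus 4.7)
My plan is to proceed by induction on $n$ (with $m$ arbitrary). The base case $n=0$ is immediate: the partition $\{\supp_m(A),\R^m\setminus \supp_m(A)\}$ works, since fibers of $A$ above these pieces are either empty or equal to $\{0_{\R^0}\}$, and the trivialization is the identity on the nonempty piece.

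For the inductive step, I would start from a cell decomposition $\C$ of $\R^{m+n}$ compatible with $A$, and consider the projected cell decomposition $\pi(\C)$ of $\R^{m+n-1}$, where $\pi$ forgets the last coordinate. Applying the inductive hypothesis to the (finitely many) cells $C'\in\pi(\C)$, viewed as definable subsets of $\R^m\times\R^{n-1}$, and taking the common refinement of the resulting partitions of $\R^m$, one obtains a single definable partition $U_1,\dots,U_s$ of $\R^m$ such that for every $C'\in\pi(\C)$ and every $i$, there is a trivialization $U_i\times C'_{t_i}\to C'_{U_i}$, $(t,x)\mapsto(t,h^{C'}_{i,t}(x))$, for some chosen $t_i\in U_i$.

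Fixing $i$, I would then build a trivialization of $A_{U_i}$ along $U_i$ cell by cell. Each $C\in\C$ lying above some $C'\in\pi(\C)$ is either the graph of a continuous definable function $\xi_C$ on $C'$, or a band $(\xi_C,\zeta_C)$ over $C'$ with $\xi_C,\zeta_C$ continuous definable functions on $C'$ possibly taking the values $\pm\infty$. On a graph cell I would send $(t,(x,y))\mapsto(t,(h^{C'}_{i,t}(x),\xi_C(t,h^{C'}_{i,t}(x))))$; on a bounded band, I would use the unique affine reparameterization in $y$ carrying the interval $(\xi_C(t_i,x),\zeta_C(t_i,x))$ onto $(\xi_C(t,h^{C'}_{i,t}(x)),\zeta_C(t,h^{C'}_{i,t}(x)))$; on an infinite or half-infinite band, a translation (plus an affine rescaling of the finite side when present) does the job. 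Combining these piecewise formulas on the cells that compose $A$ produces the candidate trivialization $U_i\times A_{t_i}\to A_{U_i}$.

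The one step that will require care is to verify that these cell-by-cell formulas fit together into a continuous map. This follows from the observation that any graph cell $\Gamma_{\xi_C}$ lies in the topological boundary of the adjacent band cells $(\xi_C,\zeta_C)$ or $(\zeta_C,\xi_C)$, and the affine reparameterization used on the band degenerates to the graph formula as $y\to\xi_C$, thanks to the continuity of $\xi_C$ and $\zeta_C$ on $C'$ and the continuity of $h^{C'}_{i,t}(x)$ in $(t,x)$. The inverse is constructed symmetrically by swapping the roles of $t$ and $t_i$, so it too is continuous, yielding the desired definable homeomorphism; carrying out the construction over each $U_i$ completes the induction.
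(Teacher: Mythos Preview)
Your inductive scheme has a genuine gap. You obtain, for each base cell $C'\in\pi(\C)$, its \emph{own} trivialization $h^{C'}_{i,t}$, and these are a priori unrelated for different $C'$. The lifted map you build on a cell $C$ above $C'$ uses $h^{C'}$; when two cells $C_1\subset A$ and $C_2\subset A$ lie over different base cells $C'_1\neq C'_2$ with $C_1\subset cl(C_2)$, nothing forces your formula on $C_2$ to extend continuously to your formula on $C_1$. Your continuity check only treats adjacent graphs and bands \emph{over the same base cell}, which is not enough: the delicate part is precisely the passage between cells sitting over distinct elements of $\pi(\C)$.

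The standard remedy---and the one the paper itself adopts when proving the bi-Lipschitz analogue, Theorem~\ref{hardt} (see also Remark~\ref{rem thm princ})---is to strengthen the inductive statement: prove that for finitely many $A_1,\dots,A_p\in\s_{m+n}$ there is a partition of $\R^m$ and, over each piece $U$, a \emph{single} trivialization $h_t:\R^{n}\to\R^{n}$ of the whole ambient space that preserves every $A_j$. In the inductive step one then applies this stronger hypothesis to the collection of \emph{all} cells of $\pi(\C)$ simultaneously, obtaining one map $h_t:\R^{n-1}\to\R^{n-1}$ carrying each $C'_{t_0}$ onto $C'_t$. Your cell-by-cell lifting formulas, now with the same $h_t$ throughout, glue into a globally continuous homeomorphism that preserves every cell of $\C$, hence $A$ and any prescribed subfamily. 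The paper does not supply a proof of Theorem~\ref{thm_hardt_C0} (it is quoted from \cite{costeomin}), but the argument there follows exactly this strengthened induction.
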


\begin{rem}\label{rem_cc}
We shall make use of the following immediate consequence of this theorem: given $A\in \s_{m+n}$, there is a definable partition $\Pa$ of $\R^m$ such that, for every $B \in \Pa$,  $E_t$  is connected for every connected component $E$ of $A_B$ and all $t \in B$. 
\end{rem}

\end{section}


\begin{section}{The regular vector theorem}\label{sect_main_res}
 We denote by $\G^{n}_k$ the Grassmannian of $k$-dimensional vector subspaces of $\R^n$, and we set $\G^n:=\bigcup_{k=1}^{n}\G^n_k$ as well as $\G^n_*:=\bigcup_{k=1}^{n-1}\G^n_k$.  
 
 Given a
definable set $A \subset \R^n$, we denote by $A_{reg}$ the set constituted by all the points of $A$ at which this set is a $\cc^1$ manifold (without boundary, of dimension $\dim A$ or smaller).
Define $\tau (A)$  as the
closure of the set of vector spaces that are tangent to $A$ at a
regular point, i.e.:
$$\tau(A):=cl ( \{ T_x A  \in \G^n : x \in A_{reg} \}).$$

 Given an element $\lambda$ of $\sph^{n-1}$ and a subset $Z \subset \G^n$  we set (caution, here $Z$ is not a subset of $\R^n$):$$d(\lambda,Z) := \inf \{d(\lambda,T):T \in Z\},$$ with $d(\lambda, \emptyset):=+\infty$.
\begin{dfn}\label{boule_reguliere}
Let $A\in \s_n$. An element $\lambda$ of
$\sph^{n-1} $ is said to be {\bf regular  for the set $A$}\index{regular! for a set}  if there is $\alpha >0$ such that:
$$
d(\lambda, \tau(A)) \geq \alpha.
$$
More generally, we say that $\lambda \in \sph^{n-1} $ is {\bf regular for  $A\in \s_{m+n}$}  if there exists $\alpha >0$ such that for any  $t\in \R^m$:
\begin{equation}\label{eq_reg_familles}
  d(\lambda, \tau(A_t)) \geq \alpha.
 \end{equation}
We then also say that $\lambda$ is {\bf regular for the family $(A_t)_{t \in \R^m}$}.
A {\bf subset $C \subset \sph^{n-1}$ is regular}  for a set  $A\in \s_{m+n}$ if so are all the elements of $cl(C)$. 
\end{dfn}

 If $\lambda\in \sph^{n-1}$ is regular for $A\in \smn$, it is regular for $A_t\in \s_n$ for all $t \in \R^m$. But it is indeed even stronger since 
in (\ref{eq_reg_familles}), the  angle between the vector $\lambda$ and the tangent spaces to the fibers is required to be bounded below away from zero by a positive constant {\it independent of the parameter $t$}.

Regular vectors do not always exist, even if the considered set has empty interior (which is clearly a necessary condition), as it is shown by the simple example of a circle. Nevertheless, when the considered sets have empty interior, up to a definable bi-Lipschitz map, we can find such a vector:
\begin{thm}\label{thm_proj_reg_hom_pres_familles}
Let $A\in \smn$ such that $A_t$ has empty interior for every $t \in \R^m$. There exists a uniformly bi-Lipschitz 
  definable family of homeomorphisms  $h_t :  \R^n \rightarrow  \R^n$, $\tim$,  such that  $e_n$   is regular for the family $(h_t(A_t))_{t\in \R^m}$.
\end{thm}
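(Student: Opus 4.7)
The plan is to proceed by induction on the ambient dimension $n$, treating the parameter $t\in\R^m$ throughout as a tag rather than as a geometric variable. For $n=1$, each fiber $A_t\subset\R$ has empty interior, so by o-minimality it is a finite union of points, and in particular $\tau(A_t)=\emptyset$. Hence every vector is trivially regular for $(A_t)_{t\in\R^m}$ and one can simply take $h_t=\mathrm{id}$.

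For the inductive step, I would first apply Theorem~\ref{thm_hardt_C0} together with a cell decomposition of $\R^m\times\R^n$ compatible with $A$; its projection to $\R^m$ yields a definable partition $\Pa$ of the parameter space. It suffices to construct the family $h_t$ separately on each element of $\Pa$, so one may assume that the fibers form a definable family admitting a uniformly definable family of cell decompositions $\C_t$ of $\R^n$ compatible with $A_t$. Since $A_t$ has empty interior, every cell of $A_t$ has dimension at most $n-1$. Each top-dimensional such cell is the graph of a $\cc^1$ function $f_t^i:U_t^i\to\R$ defined over a cell $U_t^i\subset\R^{n-1}$, and lower-dimensional cells are contained in the frontiers of such graphs, so it suffices to ensure the regularity condition on the graph cells.

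The core of the argument is to make the partial derivatives $\pa_j f_t^i$ uniformly bounded after composing with a bi-Lipschitz family. Applying the inductive hypothesis to the definable family of frontiers $\bigcup_i\delta U_t^i\subset\R^{n-1}$ produces a uniformly bi-Lipschitz family $g_t:\R^{n-1}\to\R^{n-1}$ making $e_{n-1}$ regular for those frontiers; lifting by $(x,y)\mapsto(g_t(x),y)$ reduces the problem to a setting where the base cells $U_t^i$ themselves have good geometry in the last horizontal direction. To bound $|\nabla f_t^i|$ one invokes a uniform o-minimal \L ojasiewicz inequality: on each cell there are a constant $C$ and an exponent $N$, independent of $t$, with $|\nabla f_t^i(x)|\le C\,d(x,\delta U_t^i)^{-N}$. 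One then constructs a definable family of homeomorphisms of the form $(x,y)\mapsto(x,\psi_t(x,y))$ whose vertical derivative scales like $d(x,\delta U_t^i)^{N}$ on a neighborhood of each graph, so that the transformed graphs have uniformly bounded slope, i.e.\ $e_n$ becomes regular for the image family.

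The main obstacle will be the last step: the local bi-Lipschitz corrections required to cancel gradient blow-up differ from cell to cell, and gluing them into a single homeomorphism of $\R^n$ with Lipschitz constant independent of $t$ is delicate. One carries out this gluing via a pancake-type decomposition of $\R^n$ into finitely many definable regions on each of which a single rescaling exponent suffices, and patches across their common boundaries by definable cutoff-type shears, verifying that both the shears and their inverses remain uniformly Lipschitz in $t$. It is precisely this gluing that was sidestepped in \cite{vlt} via the real spectrum; replacing that tool, the combined finiteness provided by Hardt's theorem and by a uniformly definable cell decomposition controls the combinatorial complexity of the family and lets the patching be performed by hand.
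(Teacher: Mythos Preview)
Your proposal has a genuine gap at its core. The vertical rescaling you describe cannot be bi-Lipschitz: if the vertical derivative of $\psi_t$ behaves like $d(x,\delta U_t^i)^{N}$ near the frontier, then the vertical derivative of $\psi_t^{-1}$ behaves like $d(x,\delta U_t^i)^{-N}$, which is unbounded. Thus $h_t^{-1}$ fails to be uniformly Lipschitz exactly where you need the correction. More conceptually, a bi-Lipschitz map of $\R^n$ can only change slopes by a bounded multiplicative factor, so no such map can turn a graph with unbounded gradient into one with bounded gradient by a purely vertical shear. The circle in $\R^2$ already illustrates the obstruction: it is the union of two graphs over $(-1,1)$ whose gradients blow up at $\pm 1$, and at those two points the tangent is vertical; no bi-Lipschitz map that is vertical (i.e.\ preserves the $x$-coordinate) can make $e_2$ regular for the image, because the image would still have a vertical tangent somewhere. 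The homeomorphism that works for the circle must genuinely mix the coordinates.

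A second issue is the uniform \L ojasiewicz bound $|\nabla f_t^i(x)|\le C\,d(x,\delta U_t^i)^{-N}$ with $C,N$ independent of $t$. Such estimates typically require polynomial boundedness, whereas the theorem is stated (and proved in the paper) for arbitrary o-minimal structures.

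The paper's argument is of a completely different nature. It does not attempt to flatten the graphs; instead it shows (Theorem~\ref{thm_existence_des_familles}) that, after partitioning the parameter space, one can cover $B\times\R^n$ by finitely many ``slabs'' $G_k(H)$ bounded by hypersurfaces $H_k,H_{k+1}$ which are graphs of uniformly Lipschitz functions for a direction $\lambda_k\in\sph^{n-1}$, with the compatibility condition $E(H_{k+1,t},\lambda_k)=E(H_{k+1,t},\lambda_{k+1})$, and such that $A_B\subset\bigcup_k H_k$. The homeomorphism $h_t$ is then built by stacking these slabs in the $e_n$ direction: on each $G_k(H)_t$ one uses the $\lambda_k$-coordinate as the new last coordinate, and the compatibility condition guarantees that the pieces glue without turning back. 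The hard work is entirely in producing this \emph{regular system of hypersurfaces}, which is done by a rather intricate induction on $n$ (four steps, using Lemma~\ref{lem_direction_generique_droit_proj} to choose the new regular directions inside the connected component $\Lambda_p(H)$ so that Proposition~\ref{pro_connexite_des_projections} applies). Your ``pancake-type decomposition'' and ``cutoff-type shears'' gesture toward this, but the actual mechanism is the careful bookkeeping of directions $\lambda_k$ and the condition~(ii) of Definition~\ref{dfn_systemes}, not any rescaling of gradients.
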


\begin{rem}\label{rem_proj_reg_parametres}
In the above theorem, the family $h_t$ is not required to be Lipschitz nor continuous with respect to the parameter $t \in \R^m$. It is nevertheless continuous for generic parameters (see \cite[Lemma 5.17]{costeomin} or \cite[Proposition 2.4.9]{livre}). 
Also, using Proposition \ref{pro_lipschitz_parametres} below, one could see that, along the elements of a suitable partition  of $\R^m$, $h$ and $h^{-1}$ may be required to be Lipschitz with respect to the parameters  on compact sets.  
\end{rem}


In the case $m=0$, we have the following immediate corollary which was proved in \cite{vlt}:

\begin{cor}\label{cor_proj_reg_hom_pres}
Let $A\in \s_n$ be of empty interior. There exists a
definable  bi-Lipschitz homeomorphism $h : \R^n \rightarrow \R^n$
such that $e_n$ is regular for $h(A)$.
\end{cor}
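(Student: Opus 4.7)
The plan is simply to specialize Theorem \ref{thm_proj_reg_hom_pres_familles} to the case $m = 0$, which is really what makes this an \emph{immediate} corollary. When $m = 0$, the parameter space $\R^0$ consists of a single point $t_0$, so the definable family $(A_t)_{t \in \R^0}$ is nothing but the single definable set $A$, regarded as an element of $\s_n = \s_{0+n}$. Likewise, a definable family of homeomorphisms $(h_t)_{t \in \R^0}$ reduces to a single definable self-homeomorphism $h : \R^n \to \R^n$.

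Under this identification, each piece of the hypothesis and conclusion of Theorem \ref{thm_proj_reg_hom_pres_familles} transports cleanly to the statement of the corollary. The condition that $A_t$ has empty interior for every $t \in \R^m$ collapses to ``$A$ has empty interior in $\R^n$,'' which is the hypothesis given. The \emph{uniform} bi-Lipschitz requirement on the family becomes the statement that the unique map $h$ is bi-Lipschitz, because uniformity across a one-point index set is vacuous. Finally, regularity of $e_n$ for the family $(h_t(A_t))_{t\in \R^m}$, namely existence of $\alpha > 0$ with $d(e_n, \tau(h_t(A_t))) \geq \alpha$ for all $t \in \R^m$, reduces to $d(e_n, \tau(h(A))) \geq \alpha$, which is exactly regularity of $e_n$ for $h(A)$ in the sense of Definition \ref{boule_reguliere}.

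I expect no real obstacle here: the whole content of the corollary is carried by the parameterized theorem above, and the deduction is a one-line specialization once one has identified $\s_n$ with the set of definable families indexed by $\R^0$. The only thing worth double-checking is that the definability notion for a family over $\R^0$ indeed coincides with plain definability (which is immediate from Definition \ref{dfn_familles}, since $\R^0 \times \R^n \cong \R^n$), so that the output $h$ is genuinely definable in $\s$ rather than in some enlarged structure.
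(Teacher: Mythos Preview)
Your proposal is correct and matches the paper's own treatment exactly: the paper simply states that this is the ``immediate corollary'' obtained from Theorem~\ref{thm_proj_reg_hom_pres_familles} in the case $m=0$, without giving any further argument. Your careful unpacking of how each hypothesis and conclusion specializes is more detailed than what the paper itself provides, but the approach is identical.
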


The example of a circle that we already mentioned points out the fact that it is not possible for the homeomorphism given by this corollary to be always smooth, even if so is $A$.

  The proof of this theorem is given in section \ref{sect_proof_main}.  In order to motivate the material that we are going to introduce for this purpose in sections \ref{sect_a_few_lemmas} and \ref{sect_regular_systems} (especially Definition \ref{dfn_systemes} and Theorem \ref{thm_existence_des_familles}), let us now give a brief outline of the construction of this homeomorphism (we assume $m=0$ in the outline for simplicity), with explicit references to the key results and definitions.

It is actually easy to see that given $A\in \s_n$, there is a covering of $\R^n$ by finitely many sets, say $G_1,\dots, G_l$, such that each $G_k\cap A$ has a regular vector $\lambda_k$.  As a matter of fact, for each $k$,  there is a linear automorphism $h_k$ such that the vector $e_n$ is regular for $h_k(G_k\cap A)$. The problem is that it is not easy to ``paste'' these ``local embeddings'' $h_1,\dots,h_l$ into a bi-Lipschitz map $h:\R^n\to \R^n$.   Somehow, the idea will be to define $h$ on $\bigcup_{i=1}^k G_i$ inductively on $k$, by means of the $h_i$'s, starting with $h=h_1$.

We introduce for this purpose an ``induction machinery'', called {\it regular systems of hypersurfaces} (Definition \ref{dfn_systemes}).  Extending $h$ from  $\bigcup_{i=1}^k G_i$ to  $\bigcup_{i=1}^{k+1} G_i$ somehow requires to change coordinates, and the transition map $h_{k+1}\circ h_k^{-1}$  can be interpreted as a turn from the direction  $\lambda_k$ to the direction $\lambda_{k+1}$.
 These turns could make it difficult to extend a bi-Lipschitz  mapping to a bi-Lipschitz mapping for we might come back to our starting  point. Working with a regular  system of hypersurfaces $H$ makes it possible to turn ``without turning back'' (see (\ref{eq_E_lambda}) as well as (\ref{item_reg_syst_ii}) of Definition \ref{dfn_systemes}), progressing in a zigzag but somehow always going toward the same Lipschitz upper half-space $G_b(H)$. 

The main difficulty is therefore the proof of Theorem \ref{thm_existence_des_familles}, which yields existence of a suitable regular system of hypersurfaces. In the proof of this theorem, the trick to avoid to ``turn back'' (in the sense of  (\ref{item_reg_syst_ii}) of Definition \ref{dfn_systemes}) is to choose $\lambda_{k+1}$ in the same connected component as $\lambda_k$ of the sets of all the regular vectors of the previous step (see Proposition \ref{pro_connexite_des_projections}). The key lemma on this issue is Lemma \ref{lem_direction_generique_droit_proj}, which relies on the fact that the fiber $\widetilde{\pi}_e^{-1}(\lambda)$, for each $e$ and $\lambda$ in $\sph^{n-1}$ (see (\ref{eq_pitilda})), is a connected curve of length at least $2$, which leaves enough space to choose our regular vector (see Remark \ref{rem_connexite_pi_tilda}).

The proof of Theorem  \ref{thm_existence_des_familles} is splitted into four steps, and a  more explicit outline of it  is provided before the first step (see section \ref{sect_proof}). Moreover, the second and third steps are preceded by a paragraph that motivates them and gives some more details on their proof.
\end{section}

\begin{rem} Mostowski's work \cite{most} involved establishing results about existence of regular projections  (see also \cite{kdec,oudrane, pdec, parusub}).
Existence of a regular vector is closely related to existence of a regular projection but not completely equivalent \cite{nhan, oudrane}. This is however not the main difference between Corollary \ref{cor_proj_reg_hom_pres} and the theorems of \cite{most,oudrane, pdec, parusub}:
 Corollary \ref{cor_proj_reg_hom_pres}  provides one single vector which is regular for the whole image of the considered set by some definable bi-Lipschitz mapping whereas the theorems of \cite{most,  oudrane, pdec, parusub} provide a finite set of projections such that, at each point of the considered set itself, at least one of them is regular.
\end{rem}


\begin{section}{A few lemmas on Lipschitz geometry}\label{sect_a_few_lemmas}
\begin{subsection}{Regular vectors and Lipschitz functions.}\label{sect_reg_lips}
  Proving Theorem \ref{thm_proj_reg_hom_pres_familles} will require to prove parameterized versions of all the lemmas and propositions of \cite{vlt}. In \cite{kp},  K. Kurdyka and A. Parusi\'nski provided a parameterized version of the ``$L$-regular cell decomposition theorem''  \cite{kdec}, which enabled them to generalize their proof of Thom's gradient conjecture on o-minimal structures.  We start with a result of \cite{kp} that  will be useful for our purpose.
  
  Given $a$ and  $b$ in a definable connected set $A$, let:$$d_{A}(a,b):=\inf \{ length(\gamma):\gamma:[0,1]\to A, \mbox{ $\cc^0$ definable arc joining } a \mbox{ and } b\}$$
  (as definable arcs are piecewise $\cc^1$, their length is well-defined).  This defines a metric on $A$, generally referred as {\bf the inner metric} of $A$. 
  
   To avoid any confusion, we will refer to the restriction to $A$ of the euclidean
 metric as the {\bf outer metric}.  When $A$ is smooth, a $\cc^1$ function that has bounded derivative is Lipschitz with respect to the inner metric, but is not necessary  Lipschitz (w.r.t. the outer metric),  these two metrics being not always equivalent. We however have the following result \cite[Theorem 1.2]{kp}:

    \begin{thm}Every $A\in \s_{m+n}$ admits a definable partition into cells, such that for each $E\in \Pa$ and each $\tim$, the inner and outer metrics of $E_t$ are equivalent. The constants of this equivalence just depend on $n$ (and not on $m$ and $t$). 
\end{thm}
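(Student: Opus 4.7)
The plan is to establish the stronger statement that $A$ admits a definable partition into $L$-regular cells with $L=L(n)$ depending only on $n$; the standard fact that an $L$-regular cell has its inner and outer metrics equivalent with a ratio depending only on $L$ and $n$ then yields the claim. Recall that a cell $E\subset \R^m\times\R^n$ is fiberwise $L$-regular if, for each $t\in \R^m$, $E_t$ is inductively either a singleton, an open interval, or the graph or open band between two $L$-Lipschitz definable functions over an $L$-regular cell of $\R^{n-1}$.

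I would proceed by induction on $n$, with the parameter space $\R^m$ playing a passive role. The base case $n=1$ is immediate, since fibers of cells in $\R^{m+1}$ over $\R^m$ are points or open intervals, on which inner and outer metrics trivially coincide. For the inductive step, fix a cell decomposition $\C$ of $\R^{m+n}$ compatible with $A$, let $\pi:\R^{m+n}\to\R^{m+n-1}$ be the projection forgetting the last coordinate, and refine $\pi(\C)$ into an $L(n-1)$-regular partition $\Pa'$ by the inductive hypothesis. Every cell of $\C$ over a cell $D\in\Pa'$ is then either the graph of some definable function $\xi:D\to\R$ or a band between two such graphs.

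The heart of the argument is to refine $\Pa'$ further and to choose a definable family of orthogonal changes of coordinates $R_t\in O(n-1)$ of the fiber variables so that each resulting graph function $\xi_t\circ R_t^{-1}$ becomes $L(n)$-Lipschitz in the first $n-2$ arguments. The mechanism, due to Kurdyka \cite{kdec}, is to partition $D_t$ according to the direction in $\sph^{n-2}$ pointing roughly toward $\nabla_x\xi_t$ and to rotate so that this direction becomes $e_{n-1}$; the coordinate thereby isolated absorbs the largest partial derivative and the remaining $n-2$ partial derivatives of $\xi_t\circ R_t^{-1}$ get bounded by an absolute constant, so the output cell fits the inductive definition of $L(n)$-regularity.

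The principal obstacle I anticipate is to keep the constant $L(n)$ independent of $t$ and $m$. A naive fiber-by-fiber application of Kurdyka's decomposition only gives a constant depending on $t$; the content of \cite[Theorem 1.2]{kp} is that the bad set of rotation directions is sufficiently small parametrically that, after a further definable partition of $\R^m$ into finitely many pieces, a single rotation $R_t$ per piece can be chosen with uniformly controlled effect on all fibers simultaneously. Formalizing this uniformity is the crux of the proof and requires that at every stage of the induction the refinements of $\R^m$ and the Lipschitz constants entering depend only on $n$, which in turn rests on the fact that the locus of exceptional directions in the Grassmannian is, in each fiber, a definable set of codimension at least one and hence avoidable by a definable choice of frame on each piece of a suitable partition of the parameter space.
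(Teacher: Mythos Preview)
The paper itself does not prove this statement: it is quoted from \cite[Theorem 1.2]{kp}, with only the remark that it can alternatively be derived from Proposition~\ref{pro_dec_L_regulier_avec_proj_fixees} by induction on $n$. Your plan follows the original \cite{kp} strategy ($L$-regular cells, Kurdyka's rotation trick), and the implication ``$L$-regular $\Rightarrow$ inner and outer metrics equivalent with constant depending only on $L,n$'' is indeed standard.

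There is, however, a genuine gap at precisely the point you flag as the crux. You write that the required uniformity in $t$ and $m$ is ``the content of \cite[Theorem 1.2]{kp}'' --- but that is exactly the theorem you are proving, so invoking it is circular. Your own sketch (``the locus of exceptional directions is of codimension at least one and hence avoidable by a definable choice of frame'') does not close the gap: a definable selection lets you avoid the bad set fiberwise, but nothing in your outline controls the \emph{distance} to it, and it is precisely this quantitative margin that governs the Lipschitz constant of $\xi_t\circ R_t^{-1}$. Without it you only obtain $L=L(t)$, not $L=L(n)$.

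The route the paper hints at handles this cleanly and is worth comparing. Instead of a $t$-dependent rotation toward the gradient, Proposition~\ref{pro_dec_L_regulier_avec_proj_fixees} (via Lemmas~\ref{lem_kurdyka} and~\ref{lem  kurdyka proj fixees}) fixes in advance finitely many directions $\lambda_1,\dots,\lambda_N\in\sph^{n-1}$ depending only on $n$, and shows that for each open cell $C$ some $\lambda_{j(C)}$ is regular for $(\delta C_t)_{\tim}$ with an angle bound depending only on $n$ (Remark~\ref{rem_borne_ligne_reguliere_L_reg_dec}); the uniformity comes from compactness of the Grassmannian, not from a parametric choice. If you wish to repair your argument, replace the rotation $R_t$ by a choice among finitely many fixed linear maps and supply a quantitative lemma, in the spirit of Lemma~\ref{lem  kurdyka proj fixees}, guaranteeing that one of them works with a margin depending only on $n$.
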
\label{thm_kp}
 The techniques that we use in section \ref{sect_finding} to prove  Proposition \ref{pro_dec_L_regulier_avec_proj_fixees}  are actually related to the main ideas of the proof of this theorem that is given in \cite{kp}. It is indeed possible to show the above theorem from the latter proposition, together with an induction on $n$. For more details we refer the reader to the latter article (see also \cite[Chapter 3]{livre}).

\begin{pro}\label{pro_extension_fonction_lipschitz} Every definable Lipschitz function $\xi:A \to \R$,  $A \in \s_n$, can be extended to an $L_\xi$-Lipschitz definable function  $\tilde{\xi}:\R^n \to \R$.
 \end{pro}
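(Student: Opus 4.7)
The plan is to adapt the classical McShane--Whitney extension formula and verify that it stays inside the o-minimal structure. Concretely, assuming $A\neq\emptyset$ (otherwise there is nothing to prove, since then $L_\xi=0$ and one may take $\tilde\xi\equiv 0$), I set
\begin{equation*}
\tilde\xi(x):=\inf_{a\in A}\bigl\{\xi(a)+L_\xi\,|x-a|\bigr\}, \qquad x\in\R^n.
\end{equation*}
First I would check finiteness of this infimum: picking any $a_0\in A$ and applying the Lipschitz inequality $\xi(a_0)-\xi(a)\le L_\xi|a-a_0|$ together with the triangle inequality yields, for every $a\in A$,
\begin{equation*}
\xi(a)+L_\xi|x-a|\ge \xi(a_0)-L_\xi|a-a_0|+L_\xi|x-a|\ge \xi(a_0)-L_\xi|x-a_0|,
\end{equation*}
so the infimum is bounded below and $\tilde\xi$ is finite-valued on $\R^n$.

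Next I would verify the two properties expected of a McShane--Whitney extension. For $x\in A$, the choice $a=x$ gives $\tilde\xi(x)\le \xi(x)$, while $\xi(a)+L_\xi|x-a|\ge \xi(x)$ for all $a\in A$ by the Lipschitz condition on $\xi$, so $\tilde\xi_{|A}=\xi$. For $x,x'\in\R^n$, the inclusion $|x-a|\le |x'-a|+|x-x'|$ inside the infimum yields $\tilde\xi(x)\le\tilde\xi(x')+L_\xi|x-x'|$, and by symmetry $|\tilde\xi(x)-\tilde\xi(x')|\le L_\xi|x-x'|$.

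The only nontrivial point is definability: one needs to know that the infimum of a definable family of functions is still definable. This is a standard consequence of the o-minimality axioms: the set
\begin{equation*}
\Gamma:=\bigl\{(x,y)\in\R^n\times\R : \forall a\in A,\; \xi(a)+L_\xi|x-a|\ge y\bigr\}
\end{equation*}
is definable (it is the complement of the projection onto $\R^n\times\R$ of a definable subset of $\R^n\times\R\times A$), and since we have just shown that its fibers over $\R^n$ are non-empty bounded-above intervals, $\tilde\xi(x)$ is the upper endpoint of the fiber of $\Gamma$ above $x$, which is again a definable function. Thus $\tilde\xi$ is a definable $L_\xi$-Lipschitz extension of $\xi$ and I expect no serious obstacle beyond explicitly invoking this ``definability of $\sup$/$\inf$'' fact.
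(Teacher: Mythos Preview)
Your proposal is correct and uses exactly the same McShane--Whitney formula as the paper, $\tilde\xi(q)=\inf\{\xi(p)+L_\xi|q-p|:p\in A\}$; the paper's own proof is a two-line version of yours, citing the quantifier elimination principle for definability and leaving the Lipschitz verification as an easy computation. Your more detailed justification of finiteness, the extension property, the Lipschitz bound, and definability via projection is entirely in the same spirit.
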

\begin{proof}
Set $\widetilde{\xi}(q):=\inf \{\xi(p)+L_\xi |q-p|:p\in A\}$. By the quantifier elimination principle, it is a definable function. An easy computation shows that it is $L_\xi$-Lipschitz.
\end{proof}

\begin{rem}\label{rem_extension_familles_fonctions_lipschitz}
Let $A \in \s_{m+n}$  and let a definable function $\xi:A\to  \R$ be such that $\xi_t:A_t \to \R$ is a Lipschitz function for every $t\in \R^m$. 
The respective extensions $\tilde{\xi}_t$  of $\xi_t$, $t \in \R^m$ (with for instance $\tilde{\xi}_t\equiv 0$ if $t \notin \supp_m A$), provided by the proof of the  above proposition constitute a definable family of functions. We thus can extend definable families of Lipschitz functions to definable  families of Lipschitz functions. This will be of service.
\end{rem} 
 
\begin{lem}\label{lem_reg_inclusion}
 Let $A$ and $B$ in $\s_{n+m}$ with $B\subset A$.  If $\lambda \in \sph^{n-1}$ is regular for $A$, then it is regular for $B$.
 \end{lem}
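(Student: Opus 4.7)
The plan is to prove, for each fixed $t \in \R^m$, that every $V \in \tau(B_t)$ is contained, as a linear subspace, in some $T \in \tau(A_t)$. From the elementary observation that $V \subseteq T$ as subsets of $\R^n$ implies $d(\lambda, V) \geq d(\lambda, T)$, this yields $d(\lambda, \tau(B_t)) \geq d(\lambda, \tau(A_t)) \geq \alpha$, where $\alpha > 0$ is the uniform constant witnessing the regularity of $\lambda$ for $A$. Since $\alpha$ is independent of $t$, the same constant works for $B$ and the lemma follows. Because $\tau(B_t)$ is a closure and $d(\lambda, \cdot)$ is continuous, it suffices to establish the containment claim when $V = T_x B_t$ for some $x \in (B_t)_{reg}$.

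Fix such an $x$ and take a Whitney $(a)$-regular stratification of $A_t$ compatible with $B_t$, whose existence for definable sets was recalled earlier in the paper. Let $S$ be the stratum through $x$. Since $B_t$ is a $\cc^1$ submanifold on a neighborhood of $x$ and is a union of strata, $S$ coincides with $B_t$ near $x$, so $T_x S = T_x B_t$. Now choose a stratum $S'$ of $A_t$ of the same local dimension as $A_t$ at $x$ and satisfying $x \in cl(S')$; such an $S'$ exists because the strata of maximal local dimension cover a dense open subset of $A_t$ near $x$. At points $y$ of $S'$ sufficiently close to $x$, upper semi-continuity of the local dimension forces $A_t$ to coincide locally with $S'$, so $y \in (A_t)_{reg}$ and $T_y A_t = T_y S' \in \tau(A_t)$.

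The core of the argument is then the Whitney $(a)$ condition applied to the pair $(S, S')$: there exists a sequence $y_k \to x$ in $S'$ such that $T_{y_k} S'$ converges to some $T \in \G^n$ with $T_x S \subseteq T$. As $\tau(A_t)$ is closed and each $T_{y_k} S'$ lies in $\tau(A_t)$, the limit $T$ belongs to $\tau(A_t)$, and the inclusion $T_x B_t = T_x S \subseteq T$ delivers the desired containment.

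The main, and rather mild, obstacle is to secure the stratum $S'$ and to verify that its nearby points are regular points of $A_t$; once this is set up, Whitney $(a)$ and the elementary subspace-distance inequality $d(\lambda, V) \geq d(\lambda, T)$ for $V \subseteq T$ conclude. No parametric difficulty arises, as the argument is performed fiberwise and the constant $\alpha$ is inherited unchanged from $A$ to $B$.
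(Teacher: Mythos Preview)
Your approach is essentially the paper's: both use a Whitney~$(a)$ stratification of $A_t$ compatible with $B_t$ to show that each tangent space to $B_t$ is contained in a limit of tangent spaces to $(A_t)_{reg}$. The paper argues by contradiction via a sequence across fibers, you argue directly fiberwise; this is cosmetic.

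There is, however, a small gap. The assertion ``since $B_t$ is a $\cc^1$ submanifold on a neighborhood of $x$ and is a union of strata, $S$ coincides with $B_t$ near $x$'' is not justified: compatibility with $B_t$ only gives $S\subset B_t$, and $S$ may have strictly smaller dimension than $\dim_x B_t$ (take $B_t=\R$ stratified as $\{0\}\cup(-\infty,0)\cup(0,\infty)$ with $x=0$). The paper deals with exactly this by also requiring compatibility with $(B_t)_{reg}$ and then \emph{moving the point slightly} so that its stratum is open in $(B_t)_{reg}$, which forces $T_xS=T_xB_t$. You can make the same move: the set of $x\in (B_t)_{reg}$ lying in a stratum that is open in $(B_t)_{reg}$ is dense, and once you establish $d(\lambda,T_xB_t)\ge\alpha$ on that dense set, continuity of $x\mapsto T_xB_t$ on $(B_t)_{reg}$ propagates the inequality everywhere. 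With this adjustment your argument is complete and yields, as in the paper's Remark following the lemma, that the same constant $\alpha$ works for $B$.
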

\begin{proof}
 Assume that $\lambda \in \sph^{n-1}$ is not regular for $B$. It means that there is a sequence $((t_i,b_i))_{i \in \N}$, with $b_i \in B_{t_i,reg}$ such that $\tau:= \lim T_{b_i}B_{t_i,reg}$ exists and contains $\lambda$. Choose for every $i$ a Whitney $(a)$ regular stratification of $A_{t_i}$ (see for instance \cite{bcr,livre} for the definition) compatible with $B_{t_i}$ and $B_{t_i,reg}$  and denote by $S_i$ the stratum containing $b_i$.  Moving slightly $b_i$ if necessary, we may assume that $S_i$ is open in $B_{t_i,reg}$ (since $B_{t_i,reg}$ is open and dense in $B_{t_i}$), which entails that  $T_{b_i} S_i=T_{b_i}B_{t_i,reg}$.  As $A_{t_i,reg}$ is dense in $A_{t_i}$, for every $i\in \N$, we can find $a_i$ in $A_{t_i,reg}$, which is close to $b_i$.   Moreover, possibly extracting a sequence, we may assume that $\tau':= \lim T_{a_i}A_{t_i,reg}$ exists. 
 If $a_i$ is sufficiently close to $b_i$, by Whitney $(a)$ condition, we deduce that $\tau' \supset \tau$, which contains $\lambda$. This yields that $\lambda$ is not regular for $A$.
\end{proof}
\begin{rem} It is worthy of notice that the proof of the above lemma has established that the corresponding number $\alpha$ (see (\ref{eq_reg_familles})) can remain the same for $B$. 
\end{rem}

Given  $\lambda \in \sph^{n-1} $, we denote by $\pi_\lambda :\R^n
\rightarrow N_\lambda$  the orthogonal  projection onto the
hyperplane $N_\lambda$ normal to the vector $\lambda$, and by $q_\lambda$ the coordinate
of $q\in \R^n$ along $\lambda$, i.e. the number given by the euclidean inner product of $q$ with $\lambda$.

Given $B\in \s _{n}$ and  $\lambda \in \sph^{n-1} $, with $B \subset
 N_\lambda$, as well as
 a function $\xi:B \to \R$, we set 
\begin{equation}\label{eq_graph_for_lambda}
\Gamma_\xi ^\lambda:=\{ q \in \R^n : \pi_\lambda(q)  \in B
\quad  \mbox{and} \quad q_\lambda = \xi(\pi_\lambda(q))\},
\end{equation} and call this set {\bf the graph of $\xi$ for $\lambda$}. 

\begin{pro}\label{pro_proj_reg_decomposition_en_graphes}
The vector $\lambda \in \sph^{n-1}$ is regular for the set $A \in \smn$  if and only if there are finitely many uniformly Lipschitz definable families of functions $\xi_{i,t}: B_{i,t} \to  \R$, $\tim$, with $B_i \subset \R^m \times N_\lambda$, $i=1,\dots,p$,  such that for all $\tim$:
$$A_t = \bigcup_{i=1}^p\Gamma_{\xi_{i,t}}^\lambda.$$
\end{pro}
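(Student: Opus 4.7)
The plan is to prove the equivalence by treating the two directions separately, with the hard direction being the construction of the graph decomposition.

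For the easy direction ($\Leftarrow$), suppose $A_t = \bigcup_{i=1}^p \Gamma_{\xi_{i,t}}^\lambda$ with each $\xi_{i,t}$ being $L$-Lipschitz for a fixed constant $L$ independent of $t$. Fix $\tim$ and a regular point $x \in A_{t,reg}$, and let $v$ be a unit vector in $T_x A_t$. Writing $v$ as a limit of normalized secants $(x_k - x)/|x_k - x|$ with $x_k \in A_t \setminus \{x\}$, $x_k \to x$, I would extract a subsequence so that all $x_k$ lie on a single graph $\Gamma_{\xi_{i,t}}^\lambda$. The Lipschitz condition on $\xi_{i,t}$, together with a Cauchy argument to identify $x \cdot \lambda$ with the limit of $\xi_{i,t}(\pi_\lambda(x_k))$, yields $|(x_k - x)\cdot \lambda| \le L|\pi_\lambda(x_k - x)|$. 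Squaring and rearranging shows the $\lambda$-component of the unit secant is bounded by $L/\sqrt{1+L^2}$, hence $|v \cdot \lambda| \le L/\sqrt{1+L^2}$ and $d(\lambda, T_x A_t) \ge 1/\sqrt{1+L^2}$ uniformly in $\tim$. Passing to the closure gives $\lambda$ regular for $A$.

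For the hard direction ($\Rightarrow$), assume $\lambda$ regular for $A$ with constant $\alpha > 0$; up to a linear isometry we may take $\lambda = e_n$. Apply o-minimal cell decomposition to $\R^{m+n} = \R^m \times N_\lambda \times \R \lambda$ compatibly with $A$. Each cell contained in $A$ is either a graph cell $\Gamma^\lambda_{\xi_i}$ over a cell $D_i \subset \R^m \times N_\lambda$, or a band cell of the form $\{(t,y,s) : (t,y) \in D, \zeta_1(t,y) < s < \zeta_2(t,y)\}$. I claim no band cell $C$ can lie in $A$: refine so $C$ is a single stratum of a Whitney $(a)$-regular stratification compatible with $A$; then for any $p = (t_0, \bar{p}) \in C$, the slice $C_{t_0}$ is smooth at $\bar{p}$ with $e_n \in T_{\bar{p}} C_{t_0}$. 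If $\bar{p}$ is regular in $A_{t_0}$, then $T_{\bar{p}} A_{t_0} \supset T_{\bar{p}} C_{t_0} \ni e_n$, immediately contradicting regularity of $\lambda$; otherwise there is a higher-dimensional stratum $S' \subset A_{t_0}$ with $\bar{p} \in cl(S')$, and Whitney's $(a)$ condition produces a sequence $x_k \in S' \cap A_{t_0, reg}$, $x_k \to \bar{p}$, whose tangent spaces converge to some $\tau \supset T_{\bar{p}} C_{t_0} \ni e_n$, so $d(e_n, \tau) = 0$ with $\tau \in \tau(A_{t_0})$, again contradicting regularity.

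It follows that $A = \bigcup_i \Gamma^\lambda_{\xi_i}$ with each $\xi_i$ a definable $\cc^1$ function on a cell $D_i$, and setting $\xi_{i,t}(y) := \xi_i(t,y)$ produces the desired definable families. The pointwise derivative bound follows from the elementary calculation $d(\lambda, T_{(y,\xi_{i,t}(y))} \Gamma^\lambda_{\xi_{i,t}}) = 1/\sqrt{1+|\nabla \xi_{i,t}(y)|^2}$ together with the inclusion $T_y \Gamma \subset T_{(y,\xi_{i,t}(y))} A_t$ (at regular points of $A_t$ on top-dimensional graphs, and via Whitney $(a)$ limits otherwise); this gives $|\nabla \xi_{i,t}(y)| \le \sqrt{1-\alpha^2}/\alpha =: L_0$ uniformly in $\tim$. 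The main obstacle, which I expect to require the most care, is converting this pointwise derivative bound into a Lipschitz bound with constant independent of $t$, since the cells $D_{i,t}$ are in general not convex. This is resolved by invoking the Kurdyka--Parusi\'nski theorem (Theorem \ref{thm_kp}) to refine the decomposition so that the inner and outer metrics of each $D_{i,t}$ are equivalent with a constant $K$ depending only on $n-1$. Integrating $\nabla \xi_{i,t}$ along a definable path in $D_{i,t}$ connecting $y$ to $y'$ of length at most $K|y-y'|$ then yields $|\xi_{i,t}(y) - \xi_{i,t}(y')| \le K L_0 |y - y'|$ with $K L_0$ independent of $t$, completing the proof.
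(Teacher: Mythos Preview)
Your proof is correct and follows essentially the same approach as the paper: reduce to $\lambda=e_n$, take a cell decomposition, rule out band cells, obtain a uniform derivative bound, and then invoke the Kurdyka--Parusi\'nski result (Theorem~\ref{thm_kp}) to pass from the inner-metric Lipschitz bound to the outer-metric one. The only difference is packaging: where you argue directly via Whitney~$(a)$ at the fiber level (and should, to be precise, stratify $A_{t_0}$ rather than $A$), the paper simply cites Lemma~\ref{lem_reg_inclusion}, whose proof is exactly that Whitney~$(a)$ argument.
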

\begin{proof} As the ``if'' part is clear, we will focus on the converse.
 Up to a linear isometry we can assume that $\lambda=e_n$. Let  $A \in \smn$. Take a cell decomposition compatible with $A$ and let $C$ be a cell included in $A$. This cell cannot be a band since $e_n$ is regular for $A$ (see Lemma \ref{lem_reg_inclusion}). It is thus the graph of a $\cc^1$ function $\xi:D \to \R$, with $D \in \s_{m+n-1}$, such that $\xi_t$ has bounded first derivative (independently of $t$). It therefore must be uniformly Lipschitz with respect to the inner metric. It follows from Theorem \ref{thm_kp} that there is a definable partition $\Pa$ of $D$ such that for each $E\in \Pa$, the inner metric of $E_t$ is equivalent to its outer metric for all $\tim$, with constants that just depend on $n$. The family of functions $\xi$ induces a uniformly Lipschitz family of functions on every element of $\Pa$. 
\end{proof}

We finish this subsection with an elementary proposition that will be of service to prove Theorem \ref{thm_existence_des_familles}. This proposition yields that we can replace a given collection of families of Lipschitz functions with an increasing collection of families of Lipschitz functions $\xi_{1,t}\le \dots \le \xi_{k,t}$ in such a way that the union of the graphs is unchanged:

\begin{pro}\label{pro_famille_lipschitz_ordonnees}
Let $f_{1,t},\dots,f_{k,t}$, $\tim$, be definable families of functions on $ N_\lambda$, $\lambda \in \sph^{n-1}$, and let $L \in \R$.  Assume that for all $i\le k$ and for all $t \in \R^m$, the function $f_{i,t}$ is  $L$-Lipschitz.   Then, there exist some definable families of functions $\xi_{1,t},\dots,\xi_{k,t}$ on $ N_\lambda$
 such that for all $t \in \R^m$
 \begin{enumerate}
\item $\xi_{i,t}$ is $L$-Lipschitz and all $i \le k$.
\item  $\bigcup_{i=1}^k \Gamma_{\xi_{i,t}}^\lambda=\bigcup_{i=1}^k \Gamma_{f_{i,t}}^\lambda$.
\item $\xi_{1,t}\le \dots \le \xi_{k,t}$.
\end{enumerate}
\end{pro}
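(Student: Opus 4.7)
My plan is to define $\xi_{j,t}(x)$ to be the \emph{$j$-th order statistic} of the tuple $(f_{1,t}(x),\dots,f_{k,t}(x))$, i.e.\ the $j$-th smallest value among $f_{1,t}(x),\dots,f_{k,t}(x)$ counted with multiplicity. Concretely, I would set
$$\xi_{j,t}(x):=\inf\bigl\{c\in\R:\#\{i\le k: f_{i,t}(x)\le c\}\ge j\bigr\}.$$
This is defined by a first-order formula in $(t,x,c)$ over $\R$, and hence gives a definable family. Property (3) is immediate from the definition, and (2) follows because at each $(t,x)$ the multisets $\{\xi_{1,t}(x),\dots,\xi_{k,t}(x)\}$ and $\{f_{1,t}(x),\dots,f_{k,t}(x)\}$ coincide, so the graphs over $N_\lambda$ of the two families have the same union.

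The only nontrivial point is (1), namely that taking order statistics preserves the Lipschitz constant $L$. This is the step I would expect to be the main one. My plan is to fix $t\in\R^m$ and $x,y\in N_\lambda$ and use the counting characterization of $\xi_{j,t}$. For any $c\in\R$, the $L$-Lipschitz property of each $f_{i,t}$ gives
$$f_{i,t}(x)\le c\quad\Longrightarrow\quad f_{i,t}(y)\le c+L|x-y|,$$
hence
$$\#\{i: f_{i,t}(y)\le c+L|x-y|\}\ge\#\{i: f_{i,t}(x)\le c\}.$$
Applied with $c=\xi_{j,t}(x)$, the right-hand side is at least $j$ by the definition of $\xi_{j,t}$, so the left-hand side is at least $j$ as well, which yields $\xi_{j,t}(y)\le\xi_{j,t}(x)+L|x-y|$. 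Exchanging the roles of $x$ and $y$ gives the reverse inequality, and therefore $|\xi_{j,t}(x)-\xi_{j,t}(y)|\le L|x-y|$.

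No o-minimality beyond quantifier elimination is needed, the argument is uniform in $t$, and no partition of $\R^m$ has to be performed, which matches the clean form of the statement. The only care one has to take is with the $\inf$ in the definition of $\xi_{j,t}$: since at a fixed $(t,x)$ it is the infimum of a finite union of intervals of the form $[f_{i,t}(x),+\infty)$, the infimum is in fact attained, so $\xi_{j,t}(x)$ is a genuine value of one of the $f_{i,t}(x)$; this is what guarantees the equality of graphs in (2).
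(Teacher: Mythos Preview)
Your proof is correct, and the functions you define coincide with those in the paper: both constructions produce the order statistics of $(f_{1,t}(x),\dots,f_{k,t}(x))$. The paper writes this as $\xi_{j,t}(x)=f_{i_j(t,x),t}(x)$, where the index functions $i_j(t,x)$ are built inductively by always selecting the smallest remaining value (with a tie-breaking rule), while you describe the same values via the counting formula.

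Where the two arguments genuinely differ is in the verification of the $L$-Lipschitz property. The paper takes a cell decomposition of $\R^m\times N_\lambda$ on which all differences $f_{j}-f_{j'}$ have constant sign, observes that each $i_j$ is then constant on every cell so that $\xi_{j}$ agrees with a fixed $f_{i}$ there, and reduces Lipschitzness to checking that these restrictions glue continuously across cells (a step left to the reader; one then concludes by restricting to a segment from $x$ to $y$, which meets finitely many cells). Your counting argument bypasses all of this: the implication $f_{i,t}(x)\le c\Rightarrow f_{i,t}(y)\le c+L|x-y|$ immediately propagates the threshold count and yields $\xi_{j,t}(y)\le\xi_{j,t}(x)+L|x-y|$ with no cell decomposition, no o-minimality, and nothing left to the reader. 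This is a cleaner and more elementary route; the paper's approach, on the other hand, makes the piecewise-coincidence with the original $f_i$'s explicit, which can be convenient if one later wants to exploit that structure.
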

\begin{proof}
Up to an orthogonal linear mapping with may assume $\lambda=e_n$.  We are going to define  inductively on $j$ some definable  integer valued functions $i_j:\R^m \times \R^{n-1} \to \R$, $j=1,\dots,k$ such that for every $t \in \R^m$ and $j \le k$, the functions   \begin{equation}\label{eq_g_j}\xi_{j,t}(x):=f_{i_j(t,x),t}(x)\end{equation} are $L$-Lipschitz functions   satisfying  $\xi_{1,t}\le \dots \le \xi_{k,t}$. Indeed, let $i_1(t,x):=\min \{ i \le k: f_{i,t}(x) =\min_{l\le k} f_{l,t}(x) \}$. Then, assuming that $i_1,\dots,i_{j-1}$ have been defined, let $$i_j(t,x):= \min \{ i \in I_j(t,x): f_{i,t}(x) =\min_{l \in I_j(t,x)} f_{l,t}(x) \},$$ where $I_j(t,x)$ is the set constituted by the positive integers which are not greater than $k$ and different from $i_1(t,x),\dots,i_{j-1}(t,x)$. We clearly have $\xi_{1,t}(x)\leq \dots \le \xi_{k,t}(x)$ if $\xi_{j,t}(x)$ is defined as in (\ref{eq_g_j}).

  Take a cell decomposition $\C$ of $\R^m \times \R^{n-1}$ such that the functions $(f_{j,t}(x)-f_{j',t}(x))$ have constant sign (positive, negative, or zero)  on every cell and observe that, since the $i_j$'s are constant on every cell, they are definable.

By construction, we have $\bigcup_{j=1} ^k \Gamma_{\xi_{j,t}} =\bigcup_{j=1}^k \Gamma_{f_{j,t}}$, for all $\tim$, which entails that $e_n$ is regular for the graphs of the families $\xi_{j,t}$, $\tim$. As a matter of fact, for each $j$, in order to show that $\xi_{j,t}$ is  $L$-Lipschitz, it suffices to establish that the functions $\xi_{j|C_t}$, $C \in \C$, glue together into a continuous function on $\R^{n-1}$ for every $t$, which is left to the reader. 
\end{proof}

\end{subsection}

\begin{subsection}{Finding regular directions.}\label{sect_finding}

\begin{lem}\label{lem_kurdyka}
Given $\nu \in \mathbb{N}$, there exists   $t_\nu >0$ such that for any
$P_1,\dots, P_\nu$ in $\G^n_*$ there exists a vector $\lambda \in \sph^{n-1} $ such that for any  $i$:
$$d(\lambda,P_i) > t_\nu.$$
\end{lem}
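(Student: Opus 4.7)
The plan is to argue by contradiction via a compactness argument on the Grassmannians. Suppose the conclusion fails for some fixed $\nu$; then for every $j\in\N$ there exist $P_1^j,\dots,P_\nu^j\in\G^n_*$ such that
$$\sph^{n-1}\subset\bigcup_{i=1}^\nu\{\lambda\in\sph^{n-1}:d(\lambda,P_i^j)\leq 1/j\}.$$

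Since $\G^n_*=\bigcup_{k=1}^{n-1}\G^n_k$ is a finite union of compact spaces, a diagonal extraction produces a subsequence along which, for each $i=1,\dots,\nu$, the dimension of $P_i^j$ is a fixed value $k_i\leq n-1$ and $P_i^j$ converges in $\G^n_{k_i}$ to some $P_i\in\G^n_{k_i}$. Next I would use that the map $(\lambda,P)\mapsto d(\lambda,P)$ is continuous on $\sph^{n-1}\times\G^n_{k_i}$ (the orthogonal projection onto $P$ depends continuously on $P$) to pass to the limit in the above inclusion and deduce
$$\sph^{n-1}\subset\bigcup_{i=1}^\nu P_i.$$

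The final step is to derive a contradiction from this last inclusion. Since each $P_i$ is a proper linear subspace of $\R^n$, it is contained in the zero locus of some nonzero linear form $\ell_i$, and therefore $\bigcup_{i=1}^\nu P_i$ is contained in the zero locus of the nonzero polynomial $\ell_1\cdots\ell_\nu$. This polynomial cannot vanish on all of $\sph^{n-1}$ (otherwise it would vanish on $\R^n$ by homogeneity and density), giving the desired contradiction.

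The main obstacle is really only the bookkeeping for the diagonal extraction and the continuity of $d(\cdot,\cdot)$ on $\sph^{n-1}\times\G^n_{k}$; both are routine once one fixes the dimensions $k_i$. The heart of the argument is the elementary observation that a finite union of proper linear subspaces cannot cover $\R^n$.
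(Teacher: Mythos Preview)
Your proof is correct. Both your argument and the paper's rest on the same two ingredients: compactness of the (finite union of) Grassmannians, and the fact that finitely many proper linear subspaces cannot cover $\sph^{n-1}$. The paper simply packages this more directly: it sets $\varphi(P_1,\dots,P_\nu):=\sup_{\lambda\in\sph^{n-1}}\min_{i\le\nu}d(\lambda,P_i)$, observes that $\varphi$ is positive (precisely because no finite union of proper subspaces covers the sphere), and concludes from compactness that $\varphi$ is bounded below by some $t_\nu>0$. Your contradiction-by-sequences argument is the sequential unwinding of exactly this compactness statement; it is a bit longer but equally valid. One small point you glossed over: when passing to the limit in the inclusion you need a pigeonhole step (for each fixed $\lambda$, some index $i$ recurs infinitely often along the subsequence), but this is routine.
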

\begin{proof}
Given $P_1,\dots, P_\nu$ in $\G^n_*$, let $\varphi(P_1,\dots,P_\nu):=\sup_{\lambda\in \sph^{n-1}}\min_{i\le \nu} d(\lambda,P_i)$. Since the $P_i$'s have positive codimension, $\varphi$ is a positive function, which, since the Grassmannian is compact, must be bounded below away from zero. 
\end{proof}

The next lemma is a refinement of the just above lemma which says that the vector $\lambda$ can be chosen among finitely many ones.

\begin{lem}\label{lem  kurdyka proj fixees}
 Given $\nu \in \N$, there exist $\lambda_1, \dots, \lambda_N$ in $\sph^{n-1}$
and $\alpha_\nu >0$ such that for any $P_1 , \dots, P_\nu$ in $\G^n_*$ we may find $i \leq N$ such that for any $j \le \nu$:
$$ d(\lambda_i  , P_j)> \alpha_\nu.$$
\end{lem}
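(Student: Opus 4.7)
The plan is to deduce this from Lemma \ref{lem_kurdyka} by a standard compactness/covering argument in the compact space $(\G^n_*)^\nu$.

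First, for each $\lambda \in \sph^{n-1}$, I would introduce the set
$$V_\lambda := \{(P_1,\dots,P_\nu)\in (\G^n_*)^\nu : d(\lambda,P_j) > t_\nu/2 \text{ for all } j\le\nu\},$$
where $t_\nu$ is the constant provided by Lemma \ref{lem_kurdyka}. Since the distance function $P \mapsto d(\lambda, P)$ is continuous on the Grassmannian (with respect to the natural metric on $\G^n$), the set $V_\lambda$ is open in the compact space $(\G^n_*)^\nu$.

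Next, I would note that Lemma \ref{lem_kurdyka} precisely asserts that the family $\{V_\lambda\}_{\lambda \in \sph^{n-1}}$ covers $(\G^n_*)^\nu$: indeed, for any tuple $(P_1,\dots,P_\nu)$, the previous lemma yields some $\lambda$ with $d(\lambda,P_j) > t_\nu > t_\nu/2$ for every $j$, so this tuple lies in $V_\lambda$.

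By compactness of $(\G^n_*)^\nu$, there exist finitely many vectors $\lambda_1,\dots,\lambda_N$ in $\sph^{n-1}$ such that $V_{\lambda_1}, \dots, V_{\lambda_N}$ already cover $(\G^n_*)^\nu$. Setting $\alpha_\nu := t_\nu/2$, this means that for every tuple $(P_1,\dots,P_\nu) \in (\G^n_*)^\nu$, there is some $i \le N$ with $d(\lambda_i, P_j) > \alpha_\nu$ for all $j\le \nu$, which is exactly the conclusion. No step should present a real obstacle; the only thing to check carefully is the continuity of $P\mapsto d(\lambda,P)$ on the Grassmannian, which is standard.
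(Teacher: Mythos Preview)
Your argument is correct. Both your proof and the paper's deduce the lemma from Lemma~\ref{lem_kurdyka} by a compactness argument, and both end up with $\alpha_\nu = t_\nu/2$; the only difference is \emph{which} compact space is used. The paper covers $\sph^{n-1}$ by finitely many balls $\bou(\lambda_i,\frac{t_\nu}{2})$ and argues by contradiction via the triangle inequality: if every $\lambda_i$ were within $\frac{t_\nu}{2}$ of some $P_j$, then every $\lambda\in\sph^{n-1}$ would be within $t_\nu$ of $\bigcup_j P_j$, contradicting Lemma~\ref{lem_kurdyka}. You instead work on the dual side, covering the compact space $(\G^n_*)^\nu$ by the open sets $V_\lambda$ and extracting a finite subcover. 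The paper's version is marginally more elementary (no need to invoke compactness of the Grassmannian, only of the sphere), while yours is a cleaner textbook open-cover argument; in substance the two are the same idea.
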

\begin{proof}
Let $t_{\nu}$ be the real number given by Lemma \ref{lem_kurdyka} and
let $\lambda_1, \dots, \lambda_N$ in $\sph^{n-1}$ be such that
 $\bigcup_{i=1}^N \bou(\lambda_i,\frac{t_\nu}{2})\supset \sph^{n-1}$.  Suppose that there are  $P_1,
 \dots, P_\nu$ in $\G^n_*$ such that for any $i \in \{1,\dots N\}$
 we have $d(\lambda_i
 ,\bigcup_{j=1} ^\nu P_j)\le \frac{t_\nu}{2}$. This
 implies that  
 any $\lambda$ in $\sph^{n-1}$ satisfies  $$d(\lambda
 ,\bigcup_{j=1} ^\nu P_j)<  t_\nu,$$ contradicting Lemma \ref{lem_kurdyka}. It is thus enough to set $\alpha_\nu:=\frac{t_\nu}{2}$.
\end{proof}
The next lemma will require a definition.
We  estimate the   angle between two vector subspaces $P$ and $Q$ of $\R^n$ in the following
way:
$$\angle (P,Q)=\sup \{ d(\lambda,  Q): \lambda\: \, \mbox{is a unit vector of}\:\, P\}.$$
 This constitutes a metric on each $\G^{n}_k$,  $k\le n$.

\begin{dfn}\label{df alpha flat}
Let $\alpha >0$ and $Z\in \s_{m+n}$. We say that the family $(Z_t)_{\tim}$ is
{\bf $\alpha$-flat}\index{flat@$\alpha$-flat} if: $$\sup \{\angle (P,Q):P, Q \in \bigcup_\tim \tau(Z_{t,reg})\} \le \alpha.$$
We then also say that $Z$ is {\bf $(m,\alpha)$-flat}\index{flat@$(m,\alpha)$-flat}. When $m=0$, we say that $Z$ is {\bf $\alpha$-flat}. 
\end{dfn}

If $P$ and $Q$ are two vector subspaces of $\mathbb R^n$ satisfying $\dim P>\dim Q$ then $\angle (P,Q)=1$. As a matter of fact, if $Z$ is $(m,\alpha)$-flat for some $\alpha<1$, then $Z_t$ must be of pure dimension for all $t$.

\begin{rem}\label{rem_alpha_flat}
 It follows from Lemma \ref{lem  kurdyka proj fixees} that if $Z_{1,t},\dots, Z_{\nu,t}$, $\tim$, are $\alpha_{\nu}$-flat definable families (where $\alpha_\nu$ is the constant provided by the latter lemma) of subsets of $\R^n$ of empty interiors then one of the $\lambda_i$'s (that are also provided by the latter lemma) is regular for all these families. 
\end{rem}

\begin{lem}\label{lem_partition_tangents}
Given $Z\in \s_{m+n}$ and $\alpha>0$, we can find a finite partition  of $Z$ into $(m,\alpha)$-flat sets. 
\end{lem}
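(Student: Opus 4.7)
The plan is to argue by induction on $\dim Z$, combining cell decomposition with a Gauss map refinement and a finite cover of the Grassmannian by sets of small angle diameter.

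For the base case $\dim Z=0$, the set $Z$ is finite, every fibre $Z_t$ is discrete, and since $\G^n=\bigcup_{k\ge 1}\G^n_k$ we have $\tau(Z_{t,reg})=\emptyset$ for every $t$, so $Z$ is trivially $(m,\alpha)$-flat.

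For the inductive step I would first apply the cell decomposition theorem and partition $Z$ into cells $D_1,\dots,D_p$ of $\R^{m+n}$ compatible with the canonical projection $\pi_m:\R^{m+n}\to\R^m$. Each such cell projects onto a cell of $\R^m$ and its non-empty fibres $(D_j)_t$ are cells in $\R^n$ of constant dimension $d_j$, hence smooth submanifolds. When $d_j=0$ the base case already shows that $D_j$ is $(m,\alpha)$-flat, so I may assume $d_j\ge 1$. On such a cell the Gauss map
\[
G_j:D_j\to\G^n_{d_j},\qquad (t,x)\mapsto T_x(D_j)_t,
\]
is well defined and depends continuously and definably on $(t,x)$, after identifying $\G^n_{d_j}$ with a definable compact subset of $\R^{n^2}$ via orthogonal projections.

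The key step is then to cover $\G^n_{d_j}$ by finitely many open angle-balls $B_{j,1},\dots,B_{j,N_j}$ of radius $\alpha/2$, so that each closure $\overline{B_{j,l}}$ has angle diameter at most $\alpha$. The preimages $A_{j,l}:=G_j^{-1}(B_{j,l})$ are open in $D_j$ and cover it, so I would take a cell decomposition $\C_j$ of $\R^{m+n}$ compatible with $D_j$ and with every $A_{j,l}$. For any cell $E\in\C_j$ contained in $D_j$ with $\dim E=\dim D_j$, $E$ is open in $D_j$ and contained in some $A_{j,l}$; hence $E_t$ is open in $(D_j)_t$, so it is a smooth submanifold of $\R^n$ of dimension $d_j$ with $T_xE_t=T_x(D_j)_t\in B_{j,l}$ for every $(t,x)\in E$. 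Consequently $\tau(E_{t,reg})\subset\overline{B_{j,l}}$, whose angle diameter is at most $\alpha$, so $E$ is $(m,\alpha)$-flat. The remaining cells of $\C_j$ contained in $D_j$ have dimension strictly less than $\dim D_j\le\dim Z$, so the induction hypothesis partitions each of them into $(m,\alpha)$-flat pieces; aggregating over all $j$ yields the desired finite partition of $Z$.

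The main obstacle is just bookkeeping the refinements: one must ensure that for a top-dimensional refined cell $E\subset D_j$ the fibre $E_t$ really is open in $(D_j)_t$, so that the tangent spaces to $E_{t,reg}$ coincide with those of $(D_j)_t$ and hence lie in $\overline{B_{j,l}}$. This reduces to the elementary fact that if $E$ is open in $D_j$ then $E_t$ is open in $(D_j)_t$ for the subspace topology, which is the only non-formal point worth checking carefully.
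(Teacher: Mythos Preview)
Your proof is correct and follows the same approach as the paper: cell-decompose so that each fibre is a manifold, then pull back a finite cover of the Grassmannian by balls of radius $\alpha/2$ via the fibrewise Gauss map. The paper's two-line argument in fact stops at a \emph{covering} of $Z$ by $(m,\alpha)$-flat sets (which is all that is ever used downstream), so your induction on $\dim Z$ together with the further cell refinement is precisely the extra bookkeeping needed to turn that covering into an honest partition.
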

\begin{proof}Dividing $Z$ into cells, we may assume that $Z_t$ is a manifold for all $\tim$.
We can cover the Grassmannian by finitely many balls of radius $\frac{\alpha}{2}$, which gives rise to a covering $U_1,\dots,U_k$ of $Z$  (via  the family of  mappings $Z_{t}\ni x \mapsto T_x Z_{t}$) by  $(m,\alpha)$-flat sets. 
\end{proof}

This leads us to the following result that originates in \cite{vlt} and that will be of service in section \ref{sect_triviality}. It is closely related to the $L$-regular cell decompositions introduced and constructed in \cite{kdec}. The difference is that we wish that the regular vector for $\delta C$ can be chosen among finitely many ones. This result was then improved by W. Paw\l ucki \cite{pawlucki}  who has shown that we can require in addition $N=n$.

\begin{pro}\label{pro_dec_L_regulier_avec_proj_fixees}
There exist $ \lambda_1, \dots, \lambda_N$ in  $\sph^{n-1}$
such that for any $A_1,\dots, A_p$   in  $\smn$,
there is a cell decomposition $\C$ of
$\R^{m+n}$ compatible with all the  $A_k$'s and such
that for each cell $C \in \C$ satisfying $\dim C_t=n$ (for all $t\in \supp_m C$), we may find $\lambda _{j(C)}$, $ 1
\leq j(C) \leq N$,
regular for the family $ (\delta C_t)_\tim$.
\end{pro}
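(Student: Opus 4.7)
The plan is to combine three tools from the preceding subsections: Lemma~\ref{lem  kurdyka proj fixees}, which yields a universal finite set of candidate directions avoiding any prescribed number of subspaces; Lemma~\ref{lem_partition_tangents}, which partitions any definable family into $(m,\alpha)$-flat pieces; and Remark~\ref{rem_alpha_flat}, which concludes that one of the universal directions is regular for the union of at most $\nu$ $\alpha_\nu$-flat families of empty interior. Because the vectors $\lambda_1,\dots,\lambda_N$ must be fixed once and for all, independent of the $A_k$'s, the task is to commit to an integer $\nu$ and a covering of $\G^n_*$ up front, and then, for any input, refine a cell decomposition enough that the boundary of each top-dimensional cell is captured by at most $\nu$ tangent-direction classes.

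First I would select $\nu$ large enough that $\G^n_*$ admits a covering by $\nu$ closed balls of radius $\alpha_\nu/2$, where $\alpha_\nu$ is the constant supplied by Lemma~\ref{lem  kurdyka proj fixees} for this $\nu$. Such a $\nu$ exists by a quantitative comparison: $\alpha_\nu$ decays in $\nu$ at the same polynomial rate as the covering number of $\G^n_*$ by $\alpha$-balls grows in $1/\alpha$ (namely $\dim\G^n_*$), so the two can be balanced by taking $\nu$ sufficiently large. Applying Lemma~\ref{lem  kurdyka proj fixees} with this $\nu$ produces the universal vectors $\lambda_1,\dots,\lambda_N\in\sph^{n-1}$ together with $\alpha_\nu>0$; I also fix a covering $B_1,\dots,B_\nu$ of $\G^n_*$ by closed balls of radius $\alpha_\nu/2$.

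Given $A_1,\dots,A_p\in\smn$, the next step is to take a cell decomposition of $\R^{m+n}$ compatible with all the $A_k$'s, and refine it via Lemma~\ref{lem_partition_tangents} (applied with $\alpha=\alpha_\nu/2$) so as to obtain a cell decomposition $\C$ compatible with the $A_k$'s in which every cell is $(m,\alpha_\nu/2)$-flat. Pick any cell $C\in\C$ with $\dim C_t=n$ for all $t\in\supp_m C$: then $C$ is open in $\R^{m+n}$ and $\delta C$ is a finite union of cells $D_1,\dots,D_r$ of $\C$ of strictly smaller dimension, with $\delta C_t=\bigcup_i D_{i,t}$ for every $t\in\supp_m C$. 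Since each $D_i$ is $(m,\alpha_\nu/2)$-flat, its tangent directions all lie in some ball $B_{j(D_i)}$. Grouping the $D_i$'s by $j(D_i)\in\{1,\dots,\nu\}$ and setting $Z_k:=\bigcup_{j(D_i)=k}D_i$, the triangle inequality on $\G^n$ makes each $Z_k$ an $(m,\alpha_\nu)$-flat family whose fibers have empty interior (being of dimension less than $n$). Remark~\ref{rem_alpha_flat} applied to $Z_1,\dots,Z_\nu$ therefore yields an index $j(C)$ for which $\lambda_{j(C)}$ is regular for $(\delta C_t)_\tim$.

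The principal obstacle I anticipate is the quantitative closure of the choice of $\nu$: both $\alpha_\nu$ and the covering number of $\G^n_*$ depend on $\nu$, and one must verify that the inequalities close up. A secondary subtlety is the verification of $\delta C_t=\bigcup_i D_{i,t}$ on $\supp_m C$; this follows from the structural compatibility of cell decompositions of $\R^{m+n}$ with the fiberwise topology, but deserves careful checking.
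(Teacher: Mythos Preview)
Your proposal has a genuine gap in the quantitative closure of the choice of $\nu$, and the claimed rate for $\alpha_\nu$ is incorrect. From the proof of Lemma~\ref{lem_kurdyka}, the worst case for $t_\nu$ is realized by $\nu$ hyperplanes: each set $\{\lambda\in\sph^{n-1}:d(\lambda,P_i)\le t\}$ is a band whose $(n-1)$-volume is $\sim t$, so a simple volume argument (and an explicit covering by hyperplanes through a common $(n-2)$-plane) gives $t_\nu\sim 1/\nu$, hence $\alpha_\nu\sim 1/\nu$. The covering number of $\G^n_*$ by balls of radius $\alpha_\nu/2$ is therefore $\sim \nu^{\,\dim\G^n_*}$, and for $n\ge 3$ one has $\dim\G^n_*\ge n-1\ge 2$, so the inequality ``covering number $\le \nu$'' reads $\nu^{\,\dim\G^n_*}\lesssim \nu$, which fails for every $\nu$. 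In other words, no choice of $\nu$ closes your loop: you cannot simultaneously prescribe the flatness threshold and control the number of direction classes by covering $\G^n_*$ at that same scale.

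The paper avoids this entirely by decoupling the two quantities via an induction on $n$. It proves the stronger assertion that for \emph{every} $\alpha>0$ one may arrange the cell decomposition so that $(\delta C_t)_{t\in\R^m}$ is contained in at most $2n$ definable $\alpha$-flat families of empty interior. The count $2n$ arises structurally: writing an open cell as a band $(\xi,\xi')$ over a cell $D\subset\R^{m+n-1}$ gives $\delta C_t\subset\Gamma_{\xi,t}\cup\Gamma_{\xi',t}\cup\pi_{e_n}^{-1}(\delta D_t)$, and the induction hypothesis handles $\delta D_t$ with $2(n-1)$ pieces. Since the bound $2n$ is independent of $\alpha$, one may apply Lemma~\ref{lem  kurdyka proj fixees} with $\nu=2n$ and then pick $\alpha=\alpha_{2n}$. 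That is the missing idea in your argument: a structural, dimension-independent-of-$\alpha$ bound on the number of flat pieces, rather than an $\alpha$-dependent covering of the Grassmannian. Your ``secondary subtlety'' about $\delta C_t$ versus $(\delta C)_t$ is also nontrivial (for cells $C$ with $\supp_m C$ not open, $C$ has empty interior in $\R^{m+n}$ so $\delta C=cl(C)$, whose fibers have nonempty interior), but it becomes moot once you adopt the inductive description of $\delta C_t$ above.
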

\begin{proof}
According to Lemma \ref{lem  kurdyka proj fixees} (see Remark \ref{rem_alpha_flat} and Lemma \ref{lem_reg_inclusion}) it is sufficient
to prove by induction on $n$ the following assertions: given
 $\alpha >0$ and $A_1, \dots, A_p$ in $\smn$,
 there exists a cell decomposition of $\R^{m+n}$
compatible with $A_1, \dots, A_p$ and
  such that for every  cell $C\subset \R^{m+n}$ of this cell decomposition satisfying $\dim C_t=n$, $(\delta C_t)_\tim$  is included in the union of no more than $2n$  definable families of empty interior that are  all $\alpha$-flat.

  For $n=0$ this is clear.
 Fix $n \in \N$ nonzero, $\alpha >0$, as well as $A_1 ,\dots, A_p$
   in $ \s _{m+n}$. Taking a cell decomposition if necessary, we can assume that the $A_i$'s are cells.  Apply Lemma \ref{lem_partition_tangents}  to all the $A_i$'s, and take a cell decomposition $\D$ of $\R^{m+n}$ compatible with all the elements of the obtained coverings. 
   Applying then the
  induction hypothesis to the elements of  $\pi_{e_{m+n}}(\D)$, we get a refinement $\D'$ of $\pi_{e_{m+n}}(\D)$. 
  
  Given a cell $D$ of $\D'$, each $A_i$ is above $D$, either the graph of a definable function, say $\xi_{i,D}$, or a band, say $(\xi_{i,D},\xi'_{i,D})$, with $\xi_{i,D}<\xi'_{i,D}$ definable functions on $D$ (or $\pm \infty$). Let $\C$ be the cell decomposition given by all the graphs $\Gamma_{\xi_{i,D}}$ and  $\Gamma_{\xi_{i,D}'}$, $i\le p$, $D\in \D'$. To check that it has the required property, fix an open cell  $C=(\xi_{i,D},\xi'_{i,D})$, with $\xi_{i,D}<\xi'_{i,D}$ definable functions on an open cell $D$ of $\D'$  (or $\pm \infty$).   Since $\D'$ is compatible with the images under  $\pi_{e_{m+n}}$ of the $\alpha$-flat sets that cover the $A_i$'s, the sets  $\Gamma_{\xi_{i,D}}$ and  $\Gamma_{\xi'_{i,D}}$ must be  
 $\alpha$-flat families, and since 
 $$\delta C_t \subset \left(\Gamma_{\xi_{i,D}}\right)_t \cup \left(\Gamma_{\xi_{i,D}'}\right)_t \cup \pi^{-1}_{e_{n}}(\delta D_t), $$
we see that the needed fact follows from the induction hypothesis.
 \end{proof}
\begin{rem}\label{rem_borne_ligne_reguliere_L_reg_dec}
We have proved a stronger statement: the distance between the regular vector $\lambda_{j(C)}$ and the tangent spaces to $\delta C_t$ can be bounded below away from zero by a positive number depending only on $n$, and not on the $A_k$'s. This is due to the fact that we apply Lemma \ref{lem  kurdyka proj fixees} with $\nu =2n$.
\end{rem}

\end{subsection}

\end{section}

\begin{section}{Regular systems of hypersurfaces}\label{sect_regular_systems}

This section is entirely devoted to the proof of Theorem \ref{thm_proj_reg_hom_pres_familles} which requires some material. We first introduce our machinery of regular systems of hypersurfaces.

 Let $Z\in \s _{n}$ and  $\lambda \in \sph^{n-1} $, with $Z \subset
 N_\lambda$ (see section \ref{sect_reg_lips} for $N_\lambda$).
If $A\in \s_n$ is the graph
  of a function $\xi:Z \to \R$  for $\lambda$,  we denote by $E(A,\lambda)$ the subset constituted by the points  which lie ``under the graph", i.e. we set:
\begin{equation}\label{eq_E_lambda}
E(A,\lambda):=\{q \in\pi_\lambda^{-1}(Z) : \, q_\lambda \leq
\xi(\pi_\lambda(q)) \}.\end{equation}

\begin{rem}\label{rem_efamilles}If $A \in \s_{m+n}$ is such that $A_t$ is the graph for $\lambda$ of a function $\xi_t:N_\lambda \to \R$ for every $t\in \R^m$, then $E(A_t,\lambda)$, $t \in \R^m$, is a definable family of sets of $\R^m \times \R^n$. Indeed, regarding $\lambda$ as an element of $\sph^{n+m-1}$ (i.e., identifying $\lambda$ with $(0_{\R^m},\lambda)$),  $E(A,\lambda)$ is also well-defined and   $E(A,\lambda)_t=E(A_t,\lambda)$, for all $t\in \R^m$.\end{rem}

\medskip

\subsection{Regular systems of hypersurfaces}
Regular systems of hypersurfaces  will help us to carry out constructions inductively on the dimension of the ambient space.

\begin{dfn}\label{dfn_systemes}
Let $B \in \s_m$. A {\bf regular system of hypersurfaces}  of $B\times \R^{n}$ (parametri\-zed by $B$) is a finite collection
$H=(H_k,\lambda_k)_{1 \leq k \leq b}$ with $b\in\N$, of definable
subsets $H_k$ of $B \times \R^{n}$ and elements  $\lambda_k \in \sph^{n-1}$ such that the
following properties hold for each $k < b $ and every $t\in B$:
\begin{enumerate}[(i)]
 \item The sets $H_{k,t}$ and $H_{k+1,t}$ are the respective
 graphs
 for $\lambda_k$ of two   functions $\xi_{k,t}:N_{\lambda_k}\to \R$ and  $\xi_{k,t}':N_{\lambda_k}\to \R$
 such that $\xi_{k,t} \leq \xi'_{k,t}$ and which are $C$-Lipschitz with $C\in \R$  independent of $t$. 
\item\label{item_reg_syst_ii} The following equality holds:
$$E(H_{k+1,t},\lambda_k)=E(H_{k+1,t},\lambda_{k+1}).$$
\end{enumerate}

Let $A\in \smn$. We
say that  $H$ is {\bf compatible}\index{compatible!regular system} with $A$, if $A
\subset \bigcup_{k=1} ^{b} H_k$. An {\bf extension}\index{extension of a regular system} of $H$ is a
 regular system of hypersurfaces (of $B \times \R^n$) compatible with the set $\bigcup_{k=1} ^b H_k$.
\end{dfn}


  Given a positive integer $k < b$, we set: $$G_k(H):=E(H_{k+1},\lambda_k)\setminus
int(E(H_{k},\lambda_{k})).$$
We shall write  $\Lambda_k(H)$ for  the  connected component  of the set $$\{ \lambda \in \sph^{n-1}: \mbox{$\lambda$ is regular for $H_k\cup H_{k+1}$}\} $$
that contains $\lambda_k$.
 
We will see   (Proposition \ref{pro_connexite_des_projections} below) that
 the  set $G_k(H)$ may be defined using any  $\lambda
\in \Lambda_k(H)$ (instead of $\lambda_k$).

 We will say
that another regular system $H'$ {\bf coincides with}\index{regular system!coincides with $H$ outside} $H$
\textbf{outside} $G_k(H)$ if for each $j$ either  $H'_j \subset
G_k(H)$ or there exists $ j'$ such that $H'_j=H_{j'}$.
\medskip

Given a regular system $H:=(H_{k},\lambda_k)_{k\le b}$ of $B\times \R^n$ and a definable set $B' \subset B$, we denote by $H_{B'}
$ the regular system of hypersurfaces $(H_{k,B'},\lambda_k)$ of $B' \times \R^n$, obtained by considering the sequence of the respective restrictions to $B'$ of the $H_k$'s (see (\ref{eq_rest_familles})). We will call it the {\bf restriction to $B'$}\index{restriction! of a regular system} of the regular system $H$.

\begin{rem}\label{rem G_k int vide}
It is always possible to assume that the $G_k(H)_t$'s are of  nonempty
interior for some $t$. Indeed, if $int(G_k(H)_t)=\emptyset$ for all $t\in B$, then $H_k=H_{k+1}$ and in
this case we may remove $(H_{k},\lambda_{k})$ from the
sequence. 
\end{rem}


\bigskip

Given a  regular system of hypersurfaces (of $B\times \R^n$, $B\in \s_m$) $H$, it will be convenient
to extend the notations in the following way. Set for any $t \in B$:
$H_{0,t}:= \{-\infty\}$ and $H_{b+1,t}:= \{+\infty\}$. By convention, all the
elements of $\sph^{n-1}$ will be regular for these two  sets. We will  also
consider that these two sets
 as the respective graphs of the two functions which take
$-\infty$ and $+\infty$ as respective constant values.
Define also $\lambda_0:=\lambda_1$, $\lambda_{b+1}:=\lambda_b $, as well as
$E(H_0,\lambda_0):= \emptyset$, $G_0(H):=E(H_1,\lambda_1)$,
$G_b(H):= (B \times \R^{n} )\setminus
 int(E(H_b,\lambda_b))$, as well as $E(H_{b+1},\lambda_{b+1}):=B\times \R^n$. Remark that now $B\times \R^{n}=\bigcup_{k=0}^b
 G_k(H)$.

\medskip

\begin{thm}\label{thm_existence_des_familles}
Let   $A \in \smn$ be such that $A_t$ has empty interior for all $t \in \R^m$. There exists a definable partition $\Pa$ of $\R^m$ such that for every $B \in \Pa$ there is a regular system of hypersurfaces of $B \times \R^{n}$ compatible with $A_{B}$.
\end{thm}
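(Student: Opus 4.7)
The plan is to proceed in two main stages: first, produce a cell decomposition of $\R^{m+n}$ compatible with $A$ together with a finite list of candidate regular vectors; second, build the regular system of hypersurfaces by choosing which graph to insert next and which direction $\lambda_k$ to associate with it. For the first stage I would apply Proposition \ref{pro_dec_L_regulier_avec_proj_fixees} to get vectors $\lambda_1,\dots,\lambda_N\in\sph^{n-1}$ and a cell decomposition $\C$ of $\R^{m+n}$ compatible with $A$ such that for each $n$-dimensional cell $C$, some $\lambda_{j(C)}$ is regular for $(\delta C_t)_{t\in\R^m}$. Next I would apply Theorem \ref{thm_hardt_C0} together with Remark \ref{rem_cc} to get a finite partition $\Pa$ of $\R^m$ such that, on each $B\in\Pa$, the combinatorial type of $\C_B$ is constant and each connected component of every cell $C_B$ has connected fibers. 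Since $A_t$ has empty interior, $A_B$ lies in the union of the hypersurfaces $\delta C_B$ (for $n$-dimensional cells $C$) together with cells of dimension less than $n$; by Proposition \ref{pro_proj_reg_decomposition_en_graphes}, after a further refinement of $\Pa$, each such piece decomposes into uniformly Lipschitz graphs for some $\lambda_{j(C)}$.

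For the second stage I would fix $B\in\Pa$ and enumerate the graphs from the previous step together with their associated directions. I would construct the sequence $(H_k,\lambda_k)_{k=1}^{b}$ inductively by sweeping from ``below'' to ``above'': at each step $k$, $H_{k+1}$ is chosen as the lowest remaining graph with respect to the current direction $\lambda_k$, and $\lambda_{k+1}$ is taken equal to $\lambda_k$ whenever possible, otherwise inside the connected component of $\lambda_k$ in the set of directions that are regular for $H_k\cup H_{k+1}$, i.e.\ inside $\Lambda_k(H)$. When many graphs share the same $\lambda$, Proposition \ref{pro_famille_lipschitz_ordonnees} reorders them as a chain of uniformly Lipschitz functions, which secures condition $(i)$ of Definition \ref{dfn_systemes}. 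Condition $(ii)$, the ``no turning back'' constraint $E(H_{k+1,t},\lambda_k)=E(H_{k+1,t},\lambda_{k+1})$, is secured by the connected-component choice: as announced in the outline (cf. Proposition \ref{pro_connexite_des_projections}), the set $E(H_{k+1,t},\lambda)$ depends on $\lambda$ only through its connected component inside the regular locus of $H_{k+1,t}$.

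The main obstacle is the global consistency of this greedy sweep over all $t\in B$: distinct graphs may interlace as $t$ varies and several directions are in play simultaneously, so the notion of ``lowest remaining graph'' must be independent of $t$. This is where the Hardt refinement of $\Pa$, stabilizing the combinatorics of $\C_B$, together with the uniform lower bound on the angle between $\lambda_{j(C)}$ and $(\delta C_t)_t$ (Remark \ref{rem_borne_ligne_reguliere_L_reg_dec}), become decisive: they allow the ordering to be performed using a single fixed $\lambda$ per block and to pass between blocks without discontinuity. Equally delicate is to show that, at each transition, the connected component of $\lambda_k$ in the regular vectors of $H_k\cup H_{k+1}$ contains a vector usable in subsequent steps; this is the point where the connectivity arguments foreshadowed in the outline (around Lemma \ref{lem_direction_generique_droit_proj} and Remark \ref{rem_connexite_pi_tilda}) enter the picture, providing enough room in $\sph^{n-1}$ to move $\lambda$ along a path without ever leaving the regular locus. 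Once these issues are settled, the process terminates after finitely many steps since only finitely many graphs are available, the resulting system is compatible with $A_B$ by construction, and doing this for each $B\in\Pa$ produces the desired partition.
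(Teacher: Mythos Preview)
Your proposal has a genuine gap at the heart of the second stage. The pieces of $A_B$ that you produce in the first stage are graphs for \emph{different} directions $\lambda_{j(C)}$, and nothing you have arranged forces a given piece to be a graph for $\lambda_k$ when $\lambda_k\ne\lambda_{j(C)}$. Hence the phrase ``lowest remaining graph with respect to the current direction $\lambda_k$'' is not well-defined in general, and condition $(i)$ of Definition~\ref{dfn_systemes} (which requires $H_k$ and $H_{k+1}$ to be graphs for the \emph{same} $\lambda_k$) cannot be secured by the greedy sweep as stated. You acknowledge the delicate point --- finding $\lambda_{k+1}\in\Lambda_k(H)$ that is also regular for the next hypersurface --- but you only gesture at Lemma~\ref{lem_direction_generique_droit_proj}. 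That lemma has a precise hypothesis: one needs a vector $\mu$ that is already regular for the whole set under consideration, and then it produces vectors in a fiber $\widetilde{\pi}_\mu^{-1}(y)$ close to a prescribed $l$. In your setup there is no such $\mu$ available globally; you only have the finite list $\lambda_1,\dots,\lambda_N$, none of which is assumed regular for the union of the remaining graphs.

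The paper's proof avoids exactly this trap by a device your proposal omits entirely: induction on $n$, strengthened to force all the $\lambda_k$'s into an arbitrarily small ball $\bou(\lambda,\eta)$. In Step~1 a vector $e\notin\bou(\pm\lambda,\eta)$ is chosen, the induction hypothesis is applied in $N_e\simeq\R^{n-1}$ to the boundaries $\delta W_{i,t}$, and the resulting $(n-1)$-dimensional system is lifted via $\pi_e^{-1}$; this already yields a regular system $H$ in which each $G_k(H)\cap A$ has \emph{some} regular vector $\mu$. The difficulty that $\mu$ need not lie in $\Lambda_k(H)$ is then handled by a second application of the induction hypothesis in $N_\mu$ (Step~2), where Lemma~\ref{lem_direction_generique_droit_proj} is invoked with its hypothesis genuinely satisfied (namely $d(\mu,\tau(A_t\cap G_p(H)_t))>0$), producing $\widehat{\lambda}_k\in\Lambda_p(H)$. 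Steps~3--4 then merge the two systems and apply Lemma~\ref{lemme_extension_des_familles}. Your greedy construction tries to shortcut this two-pass, dimension-reducing architecture, and without it the connectivity step you flag as ``equally delicate'' is not merely delicate but unproven.
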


This theorem is the main ingredient of the proof of Theorem \ref{thm_proj_reg_hom_pres_familles}. 
The basic strategy of the proof of Theorem \ref{thm_existence_des_familles} (given in section \ref{sect_proof}) relies on the following observation.

\begin{pro}\label{pro_connexite_des_projections}
Let $U$ be a connected subset of $\sph^{n-1} $, $\lambda_0 \in U$,  and
let $\xi : N_{\lambda_0} \rightarrow  \R$ be a continuous definable
function.  If $U$ is regular for  $X:=\Gamma_\xi ^{\lambda_0} $ then, for each $\lambda \in U$,
the set $X$ is the graph for $\lambda$ of a function $\xi^\lambda : N_\lambda
\rightarrow \R$.
 Moreover,  $E(X,\lambda)$  is independent of $\lambda \in
U$.
\end{pro}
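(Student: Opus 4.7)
I would prove both assertions by a connectedness argument on $U$. After a linear isometry I may assume $\lambda_0 = e_n$, so $X = \{(x, \xi(x)) : x \in N_{\lambda_0}\}$. As a preliminary step I would verify that $\xi$ is globally $L$-Lipschitz for some $L$: Proposition \ref{pro_proj_reg_decomposition_en_graphes} applied to $X$ yields uniformly $L$-Lipschitz definable functions $\xi_i : B_i \to \R$, $B_i \subset N_{\lambda_0}$, with $X = \bigcup_{i=1}^p \Gamma_{\xi_i}^{e_n}$. Since $X$ is a single graph, the $B_i$ cover $N_{\lambda_0}$ with $\xi|_{B_i} = \xi_i$; by definability, any segment $[x, x'] \subset N_{\lambda_0}$ meets finitely many $B_i$, and a standard total-variation bound along the segment then yields $|\xi(x) - \xi(x')| \le L|x - x'|$.

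The central step is, for each $\lambda' = (\mu', \nu') \in U$ (written in coordinates adapted to $\lambda_0$) and each $y = (y', y_n) \in N_{\lambda'}$, to analyze the intersection of the line $y + \R \lambda'$ with $X$, which is given by the zeros of
\[
g_y(t) := y_n + t \nu' - \xi(y' + t\mu').
\]
Since $\xi$ is $L$-Lipschitz, it is absolutely continuous along lines, so
\[
g_y(t) - g_y(s) = \int_s^t \bigl[\nu' - \mu' \cdot \nabla\xi(y' + r \mu')\bigr]\, dr.
\]
At a $C^1$ point $q = (x, \xi(x))$ of $X$ the integrand equals $\pm \sqrt{1 + |\nabla\xi(x)|^2}\, d(\lambda', T_qX)$, so by the regularity of $\mathrm{cl}(U)$ for $X$ it is bounded in absolute value away from $0$ uniformly in $q$ and $\lambda' \in \mathrm{cl}(U)$ by some $\alpha > 0$. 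I would then argue that this integrand has constant positive sign: fixing $q$, the map $U \ni \lambda' \mapsto \nu' - \mu' \cdot \nabla\xi(q)$ is continuous on $U$, never zero, and equals $1$ at $\lambda_0 = e_n$, so its sign is constantly $+$ on the connected set $U$. This gives $g_y(t) - g_y(s) \ge \alpha(t-s)$ for $t \ge s$, so $g_y$ is strictly increasing, tends to $\pm\infty$ at $\pm\infty$, and has a unique zero. Taking $\xi^{\lambda'}(y)$ to be this zero produces the desired representation $X = \Gamma_{\xi^{\lambda'}}^{\lambda'}$.

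For the last assertion, I would fix $q \in \R^n \setminus X$ and observe that $q \in E(X, \lambda)$ is equivalent to $g_{\pi_\lambda(q)}(q_\lambda) \le 0$, with strict inequality since $q \notin X$. The map $\lambda \mapsto g_{\pi_\lambda(q)}(q_\lambda)$ is continuous on $U$ and never vanishes there, so its sign is constant on $U$ by connectedness. Therefore membership of $q$ in $E(X, \lambda)$ does not depend on $\lambda \in U$, which gives $E(X, \lambda) = E(X, \lambda_0)$ for every $\lambda \in U$ (noting that $X$ itself is contained in $E(X, \lambda)$ for every $\lambda$).

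The main obstacle will be establishing the sign-constancy of $\nu' - \mu' \cdot \nabla \xi$ simultaneously across $\lambda' \in U$ and across all points of the $C^1$-locus of $X$ (which need not be connected, since $X$ may have singular points): this is the juncture where the regularity hypothesis (delivering the quantitative lower bound $\alpha$) and the connectedness of $U$ (used to propagate the sign from $\lambda_0$ pointwise at each fixed regular $q$) must be combined.
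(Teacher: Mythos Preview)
Your proposal is correct and takes a different route from the paper. The paper runs an open-and-closed argument on the set $C \subset U$ of directions $\lambda$ for which $X$ is a graph for $\lambda$: given $\lambda \in C$, it writes $X = \Gamma_{\xi^\lambda}^\lambda$ with $\xi^\lambda$ Lipschitz of constant at most $2/r(\lambda)$ (where $r(\lambda) := d(\lambda, \tau(X))$), and then observes that any line in a direction $\lambda' \in \bou(\lambda, r(\lambda)/3)$, viewed as a graph over $N_\lambda$, has slope exceeding that of $\xi^\lambda$ and hence meets $X$ exactly once; closedness of $C$ follows since $r$ is continuous and positive on $U$. You instead establish the graph property for every $\lambda' \in U$ in one shot, by showing each $g_y$ has derivative $\ge \alpha$ almost everywhere; connectedness of $U$ enters only to fix the sign of $\nu' - \mu'\cdot\nabla\xi$ at each individual regular point. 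Your approach is more analytic and delivers the uniform quantitative bound immediately; it also makes the second assertion essentially automatic (indeed, a direct computation gives $g_{\pi_\lambda(q)}(q_\lambda) = q_n - \xi(q')$, which is literally independent of $\lambda$, so your continuity argument is stronger than needed). The paper's approach is more geometric and avoids your preliminary global Lipschitz step for $\xi$, obtaining Lipschitzness of each $\xi^\lambda$ along the way.

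One point to tighten: in your integral formula the integrand $\mu'\cdot\nabla\xi(y'+r\mu')$ is only defined where the full gradient $\nabla\xi$ exists, and a given line in direction $\mu'$ could in principle lie entirely in the singular set of $\xi$ (a definable set of dimension $\le n-2$), so that only the directional derivative along $\mu'$ is available there. The fix is routine: by Fubini, for almost every parallel translate of the line the intersection with the singular set is a null set and your bound $g_y(t) - g_y(s) \ge \alpha(t-s)$ holds on the nose; then pass to the limit using continuity of $\xi$ to get the bound for all lines.
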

\begin{proof}
Let
$$C:=\{ \lambda \in U :\, \forall x \in N_\lambda, \quad card \;
\pi^{-1}_\lambda (x)\cap X =1\}.$$ We have to check that $C=U$.
Let $\lambda \in C$ and set $r(\lambda):=d(\lambda,\tau(X))$.

 We
claim  that $\bou(\lambda,\frac{r(\lambda)}{3})\cap U \subset C$.
Pick $\lambda' \in \bou(\lambda,\frac{r(\lambda)}{3})\cap U $ different
from $\lambda$ and set
$l'=\pi_\lambda(\lambda')$. We have to show that the line $L$
 generated by $\lambda'$ and passing through any $x \in N_\lambda$ intersects $X$ in exactly
 one point.
The line $L$ is the graph for $\lambda$ of a function
$\eta(x+tl')=\alpha \cdot t$ with $\alpha > \frac{2}{r(\lambda)}$ (we assume $\alpha>0$ for simplicity).

Since $\lambda \in C$, the set $X$ is the graph
for $\lambda$ of a function $\xi^{\lambda}$. By definition of $r(\lambda)$, $|d_x \xi^\lambda|$ (which exists almost everywhere) is bounded by $\frac{2}{r(\lambda)}$. It easily follows from the Mean Value Theorem that $\xi^\lambda$ is $\frac{2}{r(\lambda)}$-Lipschitz.
 
  This implies that for
$t$ positive large enough we will have $\eta(x+tl') \geq \xi^{\lambda}(x+tl')$ and
 $\eta(x-tl') \leq \xi^{\lambda}(x-tl')$ (since $\eta$ is growing faster than $\xi^\lambda$). Thus, there is a point $q\in \pi_{\lambda}(L)$ such
that $\xi^{\lambda}(q)=\eta(q)$, which implies that the line $L$ cuts $X$.
Uniqueness of the intersection point is clear from the fact that one function is growing faster than the other.
This yields   that $\bou(\lambda,\frac{r(\lambda)}{3}) \cap U \subset C$.

We have shown that $C$ is open in $U$. Let us now show that it is also closed in $U$. Consider $\lambda \in U$ and a
continuous definable  arc $\gamma$ in $C$ tending to $\lambda$.
Since $r(\gamma(t))$ tends to $r(\lambda)$ which is nonzero, the
ball $\bou(\gamma(t),\frac{r(\gamma(t))}{3})$ contains $\lambda$ for $t>0$ small
enough. The closeness of $C$ therefore follows from the fact that $\bou(\gamma(t),\frac{r(\gamma(t))}{3})\cap U  \subset C$. As $U$ is connected, this yields $U=C$.

It remains to check that $E(X, \lambda)$ is independent of
$\lambda \in U$. It is the closure of one of the two connected
components of the complement of  $X$.  The set $X$ is the zero locus of the
function $f(q)=q_{\lambda_0}-\xi(\pi_{\lambda_0}(q))$. Locally, at a smooth point $q$ of
$X$ it is clear that $E(X,\lambda)$ is determined by the sign of $d_q f
(\lambda)$.
  But as $U$ is regular for $X$,  this function never vanishes, and consequently $E(X,\lambda)$
 is  independent of $\lambda \in U$.
\end{proof}

\medskip

Given $e \in \sph^{n-1}$, we define a mapping $\widetilde{\pi}_e : \sph^{n-1} \setminus \{\pm e \} \to \sph^{n-1}\cap N_{e}$ by setting 
\begin{equation}\label{eq_pitilda}\widetilde{\pi}_e (u):= \frac{\pi_e (u)}{|\pi_e (u)|}.\end{equation}

 \begin{rem}\label{rem_connexite_pi_tilda}
Let $e \in \sph^{n-1}$ and suppose $\lambda_0\in \sph^{n-1}\cap N_e$ to
   be  regular for a subset $A \subset N_e$.
  Since the elements of $\widetilde{\pi } _{e}^{-1} (\lambda_0)$ lie above the line generated by $\lambda_0$, for each positive real number $a$,  the set
     $$C:=\widetilde{\pi } _{e}^{-1} (\lambda_0) \cap \{\lambda
     \in \sph^{n-1} : d(\lambda, \{\pm e\}) \geq a\}$$
  is regular for $\pi_{e} ^{-1} (A)$.  
Moreover, by Proposition \ref{pro_connexite_des_projections}, if $A$ is the
  graph of a Lipschitz function for $\lambda_0$
  then $\pi_{e}^{-1}
  (A)$ is the graph of a Lipschitz function for each
  $\lambda \in  C$.
Furthermore, the latter proposition also entails that in this case we have  for all
  $\lambda \in  C$:
  $$E(\pi_{e} ^{-1} (A),\lambda)=\pi_{e} ^{-1} (E(A,\lambda_0)).$$
\end{rem}

\subsection{Two preliminary Lemmas}
Proving Theorem
\ref{thm_existence_des_familles} requires two preliminary lemmas on regular systems of hypersurfaces.
The first one will make it possible for us to assume that the
interiors of the  $G_k(H)_t$'s are connected.

\begin{lem}\label{lem connexite des G_k}
Let $H$ be a regular system of hypersurfaces of $B \times \R^n$, $B \in \s_m$. There exists a definable partition $\Pa$ of $B$ such that for every $B' \in \Pa$, we can find an
extension $\widehat{H}$  of $H_{B'}$ such that the set $int(
G_k(\widehat{H} )_t)$ is connected for all $t \in B'$ and all $k$.
\end{lem}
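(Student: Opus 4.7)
The strategy is to refine $H$ by inserting intermediate graphs between each consecutive pair $(H_k,H_{k+1})$ so as to break the band $G_k(H)_t$ into its connected components. The partition of $B$ arises from applying Remark \ref{rem_cc} to the projections of these bands onto the hyperplanes $N_{\lambda_k}$.

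For each $k$ with $1\le k\le b-1$, write $H_{k,t}$ and $H_{k+1,t}$ as graphs for $\lambda_k$ of the $C$-Lipschitz functions $\xi_{k,t}\le \xi'_{k,t}$ on $N_{\lambda_k}$, and set
$$U_k:=\{(t,x)\in B\times N_{\lambda_k}: \xi_{k,t}(x)<\xi'_{k,t}(x)\}.$$
The restriction of $\pi_{\lambda_k}$ to $int(G_k(H)_t)$ is an open surjection onto $U_{k,t}$ with connected (interval) fibers, and the midpoint section $x\mapsto (x,(\xi_{k,t}(x)+\xi'_{k,t}(x))/2)$ is continuous, so the connected components of $int(G_k(H)_t)$ are in bijection with those of $U_{k,t}$. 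Applying Remark \ref{rem_cc} to each $U_k$ and taking a common refinement yields a definable partition $\Pa$ of $B$ such that for every $B'\in\Pa$ and every $k$, the connected components $V^{(k,1)},\ldots,V^{(k,r_k)}$ of $U_{k,B'}$ all have connected fibers $V^{(k,l)}_t$ for each $t\in B'$.

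Fix $B'\in\Pa$. For each $k$ with $r_k\ge 2$ and each $j\in\{1,\ldots,r_k-1\}$, define a definable family of functions on $N_{\lambda_k}$ by
$$\eta^{(k,j)}_t(x):=\begin{cases}\xi'_{k,t}(x) & \text{if } x\in cl(V^{(k,l)}_t)\text{ for some } l\le j,\\ \xi_{k,t}(x) & \text{otherwise}.\end{cases}$$
Setting $\eta^{(k,0)}_t:=\xi_{k,t}$ and $\eta^{(k,r_k)}_t:=\xi'_{k,t}$, one has $\eta^{(k,0)}_t\le \eta^{(k,1)}_t\le\cdots\le \eta^{(k,r_k)}_t$. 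The key point is that every $\eta^{(k,j)}_t$ is $C$-Lipschitz: given $x_1$ in the ``upper region'' (where $\eta^{(k,j)}_t=\xi'_{k,t}$) and $x_2$ in the ``lower region'', the Euclidean segment $[x_1,x_2]$ must cross the boundary of the upper region at some point $x_0$, and $x_0\in cl(V^{(k,l)}_t)\setminus V^{(k,l)}_t$ forces $\xi_{k,t}(x_0)=\xi'_{k,t}(x_0)$; the $C$-Lipschitz property of $\xi_{k,t},\xi'_{k,t}$ combined with the triangle inequality along $x_1,x_0,x_2$ then yields $|\eta^{(k,j)}_t(x_1)-\eta^{(k,j)}_t(x_2)|\le C|x_1-x_2|$.

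The desired extension $\widehat{H}$ of $H_{B'}$ is obtained by inserting, between each step $(H_k,\lambda_k)\to (H_{k+1},\lambda_{k+1})$, the hypersurfaces $(\Gamma_{\eta^{(k,1)}_t}^{\lambda_k},\lambda_k),\ldots,(\Gamma_{\eta^{(k,r_k-1)}_t}^{\lambda_k},\lambda_k)$. Condition (i) of Definition \ref{dfn_systemes} follows from the uniform Lipschitz bound and the ordering of the $\eta^{(k,j)}$; condition (ii) is trivial across consecutive hypersurfaces sharing the common direction $\lambda_k$, and is inherited from $H$ at the transitions involving $\lambda_{k+1}$. By construction, each new band between $\Gamma_{\eta^{(k,j-1)}_t}^{\lambda_k}$ and $\Gamma_{\eta^{(k,j)}_t}^{\lambda_k}$ has interior equal to the interval bundle over $V^{(k,j)}_t$ and is therefore connected, while the extremal bands $G_0(\widehat H)_t$ and $G_{b'}(\widehat H)_t$ are open half-spaces beneath or above a single Lipschitz graph, hence automatically connected. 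The main obstacle in the argument is the uniform Lipschitz estimate for $\eta^{(k,j)}$, which hinges on the equality $\xi_{k,t}=\xi'_{k,t}$ on the boundaries of the $V^{(k,l)}_t$.
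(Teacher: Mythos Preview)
Your proof is correct and follows essentially the same approach as the paper's: partition $B$ via Remark \ref{rem_cc}, take the connected components of each band (you work with the projection $U_k\subset B\times N_{\lambda_k}$ whereas the paper works upstairs with $int(G_k(H))$, but you explicitly justify the bijection), define intermediate functions $\eta^{(k,j)}$ by switching from $\xi_{k,t}$ to $\xi'_{k,t}$ on the first $j$ components, and insert their graphs with direction $\lambda_k$. Your Lipschitz argument for $\eta^{(k,j)}_t$ via the segment crossing is a bit more explicit than the paper's, which simply records that $\xi_{k,t}=\xi'_{k,t}$ on the boundaries of the projected components and declares the $\eta_i$'s Lipschitz; otherwise the constructions coincide.
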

\begin{proof}
Let $1 \leq k \leq b-1$ and suppose that there is $t$ for which $int(G_k(H)_t)$ is not connected.  Applying Remark \ref{rem_cc} to  $int(G_k(H))$ provides a partition $\Pa$ of $B$.  Let $B'\in \Pa$. Possibly replacing $H$ with $H_{B'}$, we see that we can assume that the the property displayed in Remark \ref{rem_cc} holds for  $int(G_k(H))$.

 Let
 $A_1, \dots, A_\nu$ be the connected components of $int(G_k(H))$.
 Set
$A'_i=\pi_{\lambda_k}(A_i)$ for $i\le \nu$. For $t \in B'$,  every fiber  $A_{i,t}$ is of the form:
$$ \{ q
 \in A_{i,t}' \oplus  \R\cdot \lambda_k \; :\;\, \xi_{k,t} (\pi_{\lambda_k}(q))< q_{\lambda_k}
 < \xi'_{k,t} (\pi_{\lambda_k}(q))\}. $$
 Clearly $\xi_{k,t}=\xi'_{k,t}$ on the boundary of $ A'_{i,t}$. We thus may
 define some Lipschitz functions $\eta_i$, $1 \leq i \leq \nu
 -1$, as follows. We set over $A'_{j,t}$,  $\eta_{i,t}:= \xi'_{k,t}$,
 when $1 \leq j \leq i$, and $\eta_{i,t}: =\xi_{k,t}$  whenever $i<j$. Extend the function $\eta_{i,t}$ by setting $\eta_{i,t}:= \xi_{k,t}=\xi_{k,t}'$ on $N_{\lambda_k} \setminus \pi_{\lambda_k}(int(G_k(H)))$.

 Since $\eta_{1,t} \leq \dots \leq \eta_{(\nu-1),t}$,
 it suffices to
 \begin{itemize}
\item let $\widehat{H}_j:=H_j$ and
$\widehat{\lambda}_j:=\lambda_j$ if $j \leq k$
 \item let $\widehat{H}_{j,t}$ be the graph of $\eta_{j-k,t}$ for
 $\lambda_k$ (for every $t\in \R^m$) and $\widehat{\lambda}_j:=\lambda_k$
 for $ k +1 \leq j \leq k+\nu-1 $
\item let $\widehat{H}_j:=H_{j-\nu+1}$ and
$\widehat{\lambda}_j:=\lambda_{j-\nu+1}$ if $ k +\nu \leq j \leq b+\nu-1$.
\end{itemize}
This is clearly a regular system of hypersurfaces. Note that the
$int(G_j(\widehat{H}))$, $k\leq j <k+\nu$, are the connected components of $int(G_k(H))$.
\end{proof}

\begin{lem}\label{lemme_extension_des_familles}
Let $H =\hk$ be a regular system  of hypersurfaces of $B \times \R^n$, $B \in \s_m$, and
let $j \in \{1,\dots, b\}$.
Let $X$ be a definable subset  of $G_j(H)$  and assume that
$\lambda_j$ is regular for $X$.
 Then
$H$ can be extended to a regular system  of hypersurfaces
$H'$ compatible with $X$ and which coincides with $H$
outside $G_j(H)$.
\end{lem}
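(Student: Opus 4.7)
The plan is to decompose $X$ into finitely many graphs of Lipschitz functions for the direction $\lambda_j$ and splice these graphs into $H$ between $H_j$ and $H_{j+1}$, all paired with the common direction $\lambda_j$. Keeping a single direction throughout the newly inserted block automatically makes condition (ii) of Definition \ref{dfn_systemes} trivial inside it, while matching with $H_j$ at the bottom and $H_{j+1}$ at the top will reuse what the original system already provides at index $j$.

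Concretely, since $\lambda_j$ is regular for $X$, Proposition \ref{pro_proj_reg_decomposition_en_graphes} furnishes uniformly Lipschitz definable families $f_{i,t}:B_{i,t}\to\R$, $i=1,\dots,p$, whose graphs for $\lambda_j$ decompose $X_t$ for every $t\in B$. I extend these to uniformly Lipschitz families on all of $N_{\lambda_j}$ using Proposition \ref{pro_extension_fonction_lipschitz} together with Remark \ref{rem_extension_familles_fonctions_lipschitz}. Writing $\xi_{j,t}$ and $\xi'_{j,t}$ for the Lipschitz functions whose graphs for $\lambda_j$ are $H_{j,t}$ and $H_{j+1,t}$, I then clip and set $\hat f_{i,t}:=\max(\xi_{j,t},\min(f_{i,t},\xi'_{j,t}))$. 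Because $X\subset G_j(H)=\{\xi_{j,t}\le q_{\lambda_j}\le\xi'_{j,t}\}$, the clipping leaves $f_{i,t}$ unchanged on $\pi_{\lambda_j}(X_t)$, so the graphs of the $\hat f_{i,t}$ still cover $X_t$, now lie inside the band, and remain uniformly Lipschitz. Applying Proposition \ref{pro_famille_lipschitz_ordonnees} to the augmented list $(\xi_{j,t},\hat f_{1,t},\dots,\hat f_{p,t},\xi'_{j,t})$ produces an ordered sequence $\eta_{1,t}\le\dots\le\eta_{p+2,t}$ of uniformly Lipschitz families with the same union of graphs; since $\eta_1$ is pointwise the minimum and $\eta_{p+2}$ the maximum of the input, the inequalities $\xi_{j,t}\le\hat f_{i,t}\le\xi'_{j,t}$ force $\eta_{1,t}=\xi_{j,t}$ and $\eta_{p+2,t}=\xi'_{j,t}$.

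The system $H'$ is then defined by keeping $(H_k,\lambda_k)$ for $k\le j$, inserting the graphs of $\eta_{2,t},\dots,\eta_{p+1,t}$ as $H'_{j+1},\dots,H'_{j+p}$ all paired with the direction $\lambda_j$, and setting $H'_k:=H_{k-p}$ with direction $\lambda_{k-p}$ for $k\ge j+p+1$. Verification of the regular system axioms is then routine: condition (i) holds inside the inserted block by the pointwise ordering of the $\eta_i$'s, at the bottom seam because $\eta_1=\xi_j$ reproduces $H_j$, and at the top seam because $H_{j+1}$ is already the graph of $\xi'_j=\eta_{p+2}$ for $\lambda_j$ by the original condition (i) at index $j$; condition (ii) is trivial at adjacent inserted hypersurfaces (same direction $\lambda_j$), and at the seam with the shifted $H_{j+1}$ it reduces exactly to the original condition (ii) at index $j$ in $H$. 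Compatibility with $X$ and coincidence with $H$ outside $G_j(H)$ are immediate from the construction, since each inserted graph sits inside the band. I expect the main hurdle to be precisely the clipping step: without it, the Lipschitz extensions produced by Proposition \ref{pro_extension_fonction_lipschitz} could wander outside the band $G_j(H)$, in which case the inserted hypersurfaces would fail to qualify as ``coinciding with $H$ outside $G_j(H)$.''
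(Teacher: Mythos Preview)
Your argument is correct and follows essentially the same route as the paper: decompose $X$ into uniformly Lipschitz graphs for $\lambda_j$ via Propositions \ref{pro_proj_reg_decomposition_en_graphes} and \ref{pro_extension_fonction_lipschitz}, sandwich them between $\xi_{j,t}$ and $\xi'_{j,t}$, order them with Proposition \ref{pro_famille_lipschitz_ordonnees}, and splice them into $H$ with common direction $\lambda_j$. Your explicit clipping step and the inclusion of $\xi_{j,t},\xi'_{j,t}$ in the list before ordering make precise a point the paper leaves implicit (it simply asserts $\xi_{j,t}\le\theta_{i,t}\le\xi'_{j,t}$ after ordering), so your write-up is in fact slightly more careful on this detail.
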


\begin{proof}
By property $(i)$ of Definition \ref{dfn_systemes}, for every parameter $t\in B$, the sets $H_{j,t}$ and $H_{j+1,t}$
 are the respective graphs for $\lambda_j$ of two
  functions $\xi_{j,t}$ and $\xi'_{j,t}$.
By Propositions \ref{pro_proj_reg_decomposition_en_graphes} and \ref{pro_extension_fonction_lipschitz},  the definable family  $X_t$, $t \in B$, may be included in a finite
number of graphs for $\lambda_j$ of definable families of functions on $N_{\lambda_j}$,  say
$\theta_{1,t},\dots,\theta_{\nu,t}$,  $C$-Lipschitz for every $t\in B$ with $C\in \R$ independent of $t$.
Furthermore, by Proposition \ref{pro_famille_lipschitz_ordonnees}, these families of functions can be assumed to be totally ordered (for relation $\le$), and
satisfy  $\xi_{j,t} \leq \theta_{i,t} \leq \xi'_{j,t}$, for every $t$.
Now,
\begin{itemize}
\item let  $H'_k:=H_k$  and $\lambda'_k:=\lambda _k$
 whenever $1 \leq k \leq  j$,
\item let $H'_{k,t}$ be the graph of $\theta_{k-j,t}$ for
$\lambda_j$ and $\lambda '_{k} :=\lambda_j$ for $j < k \leq j+\nu $, $t \in\R$,
\item let   $H'_{k}:=H_{k-\nu}$ and
$\lambda'_{k}:=\lambda_{k-\nu}$, whenever $j+ 1 +\nu \leq k \leq b
+\nu$.
\end{itemize}
Properties  $(i)$ and $(ii)$ of Definition \ref{dfn_systemes} clearly hold by construction.
\end{proof}

\begin{rem}\label{rem_lem_lignes}
 It will be of service that, in the proof of the above lemma, no extra regular vector was necessary, i.e. $\{\lambda_1,\dots,\lambda_b\}=\{\lambda'_1,\dots,\lambda_{b+\nu}'\}$.  
\end{rem}

\subsection{Proof of Theorem \ref{thm_existence_des_familles}}\label{sect_proof}
The proof is divided into four steps.  The strategy is to rely on Lemma \ref{lemme_extension_des_familles}.
The reader is invited to first glance at step \ref{step4}, which was deliberately made very short and sheds light on the reason why this lemma is helpful. The problem to get a regular system of hypersufaces compatible with a set $A$ using this lemma is that it requires to already have a regular system $H=\hk$ such that $\lambda_j$ is regular for $A\cap G_j(H)$ (for all $j$). This fact is not granted by the system $H$ provided by step \ref{step1}, which satisfies a slightly weaker property. We therefore shall provide (in step \ref{step2}) another system $\widehat{H}$ (see the paragraph just before step \ref{step2} for more details on this issue) and then  construct in step \ref{step3} what can be considered as a ``common refinement'' of $H$ and $\widehat{H}$, which will be satisfying to apply   Lemma \ref{lemme_extension_des_familles} in step \ref{step4}.

Let $A \in \smn$ be such that $A_t$ has empty interior for every $t\in \R^m$. Let also $\eta  \in (0,1]$ and $\lambda \in \sph^{n-1}$.
 We are actually going to prove  by induction on $n$ that, given any such $A$, $\lambda$, and $\eta$, there
exists a definable partition $\Pa$ of $\R^m$  such that for every $B \in \Pa$ we can find a regular system  of hypersurfaces of $B \times \R^{n}$ compatible
with   $A_{B}$  and such that all the $\lambda_k$'s (see Definition \ref{dfn_systemes}) can
be chosen  in   $\bou(\lambda,\eta)\cap \sph^ {n-1}
 $. 

Since the result is clear for $n=1$, we take $n\ge 2$ and assume  it to be true for
$(n-1)$. Observe that it is enough to establish the claimed statement for arbitrarily small values of $\eta$. 
 As explained just above, we split the induction step into $4$ steps.

\begin{step}\label{step1}
There exists a definable  partition $\Pa$ of $\R^m$  such that for every $B \in \Pa$, there is a regular system of hypersurfaces $H=\hk$ of $B \times \R^n$,  with $\lambda_k \in \sph^{n-1}\cap \bou(\lambda,\frac{\eta}{2})$,  such that for every $k$ the set $int(G_k(H))\cap A_{B}$ has a regular vector $\mu \in \sph^{n-1}\setminus \bou(\pm \lambda , \eta)$.
\end{step}
\begin{proof}[Proof of step \ref{step1}.]
Take  $e \in \sph^{n-1}$ such that $\pm e\notin \bou(\lambda ,
\eta)$.
 By Lemma \ref{lem_partition_tangents}, 
  for each $\sigma >0$, there are finitely many $(m,\frac{\sigma}{2})$-flat sets $U_1,\dots,U_\omega$  that cover $A$.
Consider such a
 partition for  $\sigma=t_\nu$, where $t_\nu$ is given by
 Lemma \ref{lem_kurdyka},  with $\nu$ equal to the maximal number of connected components of $\pi_e^{-1}(x)\cap A_t$, $(t,x)\in \R^m\times N_e$.   Changing $\eta$, we may assume that $\eta\leq \frac{t_\nu}{4}$.
 
 Take a cell decomposition of $\R^m\times N_e$ (identify $\R^m \times N_e$ with $\R^m \times \R^{n-1}$) which is compatible with the $\pi_e(U_i)$, $i\le \omega$, and denote by
  $(W_i)_{i\in I}$ the collection of the cells of this cell decomposition that are open  (in $\R^m \times N_e$).
  
 Since the set $A_t$ has empty interior for each $\tim$,   the set $A_t\cap \pi_e ^{-1}(W_{i,t})$ is (for each $i\in I$ and $t$), above $W_{i,t}$, the union of at most $\nu$ (possibly $0$) graphs for $e$ of continuous functions (not necessarily Lipschitz).  
  
Choose  $\eta' >0$ such that we have in $\sph^{n-1}\cap N_e$:
\begin{equation}\label{rayon eta prime}
\bou(\widetilde{\pi}_e(\lambda) ,\eta') \subset
\widetilde{\pi}_e(\bou(\lambda ,\frac{\eta}{2})).
\end{equation}

  Apply the
 induction hypothesis (identify $\R^m \times N_e$ with $\R^m \times \R^{n-1}$)
  to the families $(\delta W_{i,t})_{t \in \R^m}$ to get a partition $\Pa$. Fix $B \in \Pa$. There is a  regular system of  $B \times \R^{n-1}$, $\overline{H}=(\overline{H}_k,\overline{\lambda}_k)_{k\leq b}$, such that all the
$\overline{\lambda}_k$'s belong to
 $\bou(\widetilde{\pi}_e(\lambda) ,\eta') $.

By Lemma \ref{lem connexite des
 G_k}, up to a refinement of the partition, we may assume that each $int(G_k(\overline{H})_t)$ is connected for all $t \in B$. We may also assume these sets to be of
 nonempty interior for some $t$ (see Remark \ref{rem G_k int vide}).  Up to an extra refinement, we may assume that it happens for all $t\in B$ (by Remark \ref{rem_cc}). 

  We claim
  that for each $j$ and  $k$ and for every $t$, either $int(G_k(\overline{H})_t)$ is disjoint from $W_{j,t}$
 or
 $int (G_k(\overline{H})_t)\subset W_{j,t}$.
 To see this, observe that, as $\overline{H}$ is compatible with the $\delta W_{j,t}$'s, all the sets   $W_{j,t} \cap int(G_k(\overline{H})_t)$ are open
 and of empty (topological) boundary in $int(G_k(\overline{H})_t)$, for each $t$. Hence, if nonempty,  these are  connected
 components of $int(G_k(\overline{H})_t)$. But, as $int(G_k(\overline{H})_t)$ is connected, this entails that $W_{j,t} \cap int(G_k(\overline{H})_t)$ is either the
 empty set  or $int(G_k(\overline{H})_t)$ itself, as claimed. 

As the $W_{i,t}$'s are disjoint from each other,  for each $k$ there is a unique $i$ such that $int(G_k(H)_t)\subset W_{i,t}$. After a possible refinement of the partition $\Pa$, we can assume that for each $k$, $int(G_k(H)_t)$ meets the same $W_{i,t}$ for all $t$, i.e. that for every $B\in \Pa$,  $i$ depends only on $k$ and not on $t\in B$.

We turn to define the  regular  system  $H$ claimed in step \ref{step1}. For $1\leq k\leq b$, let:
$$ H_k := \pi_e^{-1}(\overline{H}_k).$$

Since $\overline{\lambda}_k \in \bou(\widetilde{\pi}_e(\lambda) ,\eta')$, by (\ref{rayon eta prime}), we have $\overline{\lambda}_k \in
 \widetilde{\pi}_e (\bou(\lambda,\frac{\eta}{2}))$.
 Choose some
 $\lambda_k  \in  \widetilde{\pi}_e ^{-1}(\overline{\lambda}_k) \cap
 \bou(\lambda,\frac{\eta}{2})$.

As $\lambda_k \in \bou(\lambda,\frac{\eta}{2})$ for all $k$ and neither $e$ nor $-e$
belongs to $\bou(\lambda,\eta)$ we have: $$d(\lambda_k , \pm e)
\geq \frac{\eta}{2}, \qquad \forall\, k \leq b.$$
As a matter of fact, by Remark \ref{rem_connexite_pi_tilda},
 as  $\overline{H}$ fulfills conditions $(i)$ and $(ii)$ of Definition \ref{dfn_systemes}, these conditions  are also fulfilled by $H:=(H_k,\lambda_k)_{k\leq b}$.

 By Lemma \ref{lem_kurdyka} and our choice of
$\sigma$, for all $k$, the set $A_B\cap int(G_{k}(H))$ is the union of finitely many definable sets having a common regular element
$\mu \in \sph^{n-1}$ (since we have seen that each $int(G_k(\overline{H})_t)$ is
included in $W_{j,t}$ for some $j$ independent of $t \in B$). Moving slightly $\mu$, we may assume that $d(\mu,\pm \lambda) \geq \eta$ (we have assumed $\eta \leq \frac{t_\nu}{4}$). This completes the proof of the first step.\end{proof}

\medskip

The desired partition of $\R^m$ will be obtained after finitely many refinements of the partition $\Pa$. Clearly, it is enough to prove the result for all the sets $A_{B}$, $B \in \Pa$. We thus can fix $B$ in $\Pa$ and  identify $A$ and $A_{B}$ in the next steps below.  For simplicity, we will not indicate either the partitions of the parameter space $\R^m$ resulting from the successive refinements of $\Pa$, working always  with $A$ (instead of $A_{B}$).

\bigskip

The flaw of the first step is that the regular vector $\mu$ that we get for $G_k(H)\cap A$  might not be in $\Lambda_k(H)$. If it belongs to this set, Proposition \ref{pro_connexite_des_projections} and Lemma \ref{lemme_extension_des_familles} suffice to conclude (see step \ref{step4}). Had the vector $e$ (used in step \ref{step1}) been regular for $A$, we could have required $\mu \in \Lambda_k(H)$ in step \ref{step1}, using
Lemma \ref{lem_direction_generique_droit_proj} below in the same way as we will do in step \ref{step2} to construct $\widehat{H}$ by means of $\pi_\mu$ (see assumption (\ref{hypothese key lemma})). One could therefore think that not much was achieved so far as we need a regular vector to carry out our construction and finding a regular vector is all our purpose. However, since we can focus on the sets $A\cap G_p(H)$, which all have a regular vector (provided by step \ref{step1}), by repeating in step \ref{step2} the construction of the first step (replacing $e$ with $\mu$ and making use of Lemma \ref{lem_direction_generique_droit_proj}), we will get a system $(\widehat{H}_k,\widehat{\lambda}_k)_{k\le \widehat{b}}$ with $\widehat{\lambda}_k \in\Lambda_p (H)$ regular for $G_p(H)\cap G_k(\widehat{H})\cap A$, for each fixed $p\le b$.  
 It will then remain to find (in  step \ref{step3}, see the paragraph before step \ref{step3} for more details) a common extension of $H$ and $\widehat{H}$, obtained at step \ref{step1} and \ref{step2} respectively.

\medskip

\begin{step}\label{step2}
Fix $p\leq b$.   There exists a regular system of hypersurfaces $\widehat{H}=(\widehat{H}_k,\widehat{\lambda}_k)_{k\le \widehat{b}}$ such that for every $k $,  $\widehat{\lambda}_k \in\Lambda_p (H)\cap  \bou(\lambda,\eta)$  and is  regular for $G_p(H)\cap G_k(\widehat{H})\cap A$.
\end{step}
\begin{proof}[Proof of step \ref{step2}.]
 Note that as $\lambda _p$ is regular for the set $H_p
\cup H_{p+1}$,  there exists $r >0$
such that $\bou(\lambda_p,r)$ is regular for $H_p \cup H_{p+1}$. Taking $r$ smaller if necessary, we may assume that $r\leq \frac{\eta }{4}$.

Let $r' >0$ be such that we have in $\sph^{n-1}\cap N_\mu$:
\begin{equation}\label{eq_def_de_r'}
\bou(\widetilde{\pi}_\mu(\lambda_p),r') \subset
\widetilde{\pi}_\mu(\bou(\lambda_p,\frac{r}{2})).
\end{equation}

 To complete the proof of step \ref{step2}, we need a lemma.
\begin{lem}\label{lem_direction_generique_droit_proj}
Let $l$ in $\sph^{n-1}$, $0<r \le 1$, and $\kappa \in \N$. Let  $C$  be a
 subset of $\G^n$ and $\mu\in \sph^{n-1}$ such
that
 \begin{equation}\label{hypothese key lemma}
 d(\mu,C) >0.
  \end{equation}
 There exists $\alpha >0$ such  that
for any $P_1,\dots,P_{\kappa}$ in $C$ and any $y \in
\widetilde{\pi}_\mu( \bou(l,\frac{r}{2}))$  there exists
$\widehat{\lambda} \in \bou(l,r) \cap
\widetilde{\pi}_\mu^{-1}(y)$ such that:
$$d(\, \widehat{\lambda} , \bigcup_{i=1} ^{\kappa} P_i) \geq \alpha.$$
\end{lem}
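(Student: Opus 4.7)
The plan is an explicit geometric argument: identify the fiber $\widetilde{\pi}_\mu^{-1}(y)$ as an open half great circle, estimate how big an $\alpha$-neighborhood of each $P_i$ in this fiber can be using the crucial bound $d(\mu,C)\ge\delta$, and then pick $\alpha$ small enough that the union of these neighborhoods does not cover the piece of the fiber that lies in $\bou(l,r)$.

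Set $\delta:=d(\mu,C)>0$. Since $y\in\sph^{n-1}\cap N_\mu$, the vectors $y,\mu$ form an orthonormal basis of the $2$-plane $V_y:=\mathrm{span}(\mu,y)$, and from $\pi_\mu(u)/|\pi_\mu(u)|=y$ together with $|u|=1$ one gets
$$\widetilde{\pi}_\mu^{-1}(y)=\{u(\psi):=\cos\psi\cdot y+\sin\psi\cdot\mu\,:\,\psi\in(-\pi/2,\pi/2)\},$$
the open half great circle in $V_y$, parametrized by arc length $\psi$. By the hypothesis on $y$ there exists $\widehat{\lambda}_0=u(\psi_0)$ in $\bou(l,r/2)$. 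Since a displacement of arc length $s$ on $\sph^{n-1}$ has chord length $2\sin(s/2)\le s$, moving along the fiber from $\widehat{\lambda}_0$ by arc length at most $r/2$ keeps us within $\bou(l,r)$; as the fiber has total length $\pi>r$, we can pick a sub-arc $\Gamma\subset\bou(l,r)\cap\widetilde{\pi}_\mu^{-1}(y)$ through $\widehat{\lambda}_0$ of arc length at least $r/2$ (extending in whichever direction has room, in case $\widehat{\lambda}_0$ is close to $\pm\mu$).

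For a fixed $P\in C$, let $S_\alpha(P):=\{\psi\in(-\pi/2,\pi/2):d(u(\psi),P)<\alpha\}$. Since $|u(\psi)|=1$, one has $d(u(\psi),P)^2=1-g(\psi)$ where
$$g(\psi):=|\pi_P(u(\psi))|^2=\tfrac{a+b}{2}+R\cos(2\psi-\phi),$$
with $a:=|\pi_P(y)|^2\le 1$, $b:=|\pi_P(\mu)|^2\le 1-\delta^2$ (this is where the hypothesis $d(\mu,C)\ge\delta$ enters), $c:=\langle\pi_P(y),\pi_P(\mu)\rangle$, and $R:=\sqrt{((a-b)/2)^2+c^2}\ge 0$. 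The maximum of $g$ is $M:=\tfrac{a+b}{2}+R\le 1$. If $M\le 1-\alpha^2$ then $S_\alpha(P)$ is empty; otherwise
$$R=M-\tfrac{a+b}{2}\ge(1-\alpha^2)-\tfrac{1+(1-\delta^2)}{2}=\tfrac{\delta^2}{2}-\alpha^2\ge\tfrac{\delta^2}{4}$$
as soon as $\alpha\le\delta/2$. Writing $g(\psi)=M-2R\sin^2(\psi-\psi^*)$ near a maximizer $\psi^*$, the condition $g>1-\alpha^2$ forces $|\sin(\psi-\psi^*)|<\alpha/\sqrt{2R}\le\sqrt{2}\,\alpha/\delta$, so $S_\alpha(P)$ is an interval whose arc length is at most an absolute constant $C_0$ times $\alpha/\delta$.

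Summing over $i=1,\dots,\kappa$, the union $\bigcup_{i=1}^\kappa S_\alpha(P_i)$ has arc length at most $C_0\kappa\alpha/\delta$. Choosing $\alpha:=\min\bigl(\delta/2,\,r\delta/(4C_0\kappa)\bigr)$ makes this total strictly less than $r/4$, hence strictly less than the arc length of $\Gamma$, so $\Gamma$ contains some $\psi$ at which $\widehat{\lambda}:=u(\psi)$ satisfies $d(\widehat{\lambda},P_i)\ge\alpha$ for every $i$. The main obstacle is the uniform lower bound $R\ge\delta^2/4$; without the hypothesis $d(\mu,C)>0$, subspaces $P\in C$ with $\mu$ almost lying in them would let $R$ approach $0$, and no uniform $\alpha$ could be chosen.
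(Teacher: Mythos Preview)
Your proof is correct and takes a genuinely different route from the paper's. The paper linearizes the problem by passing to a projective chart $h_i:U_i^+\to\R^{n-1}$, $x\mapsto(x_1/x_i,\dots,\widehat{x_i/x_i},\dots,x_n/x_i)$: in these coordinates the traces $\sph^{n-1}\cap T$ become affine subspaces, $\widetilde{\pi}_\mu$ becomes an affine projection along a line $L$, and the auxiliary Lemma~\ref{lem trans aux fibres pi P tilda} (that a vector $v$ tangent to a fiber of $\widetilde{\pi}_\mu$ at a point of $T$ satisfies $d(\mu,T)\le d(v,T)$) supplies a uniform angle bound $\angle(L,T)\ge u$ for all $T\in C$; the length argument then reduces to the elementary affine fact that $\{x\in\pi^{-1}(y):d(x,T)\le\alpha\}$ is a segment of length at most $2\alpha/u$. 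You instead stay on the sphere, parametrize the fiber explicitly by $u(\psi)=\cos\psi\cdot y+\sin\psi\cdot\mu$, and compute $|\pi_P(u(\psi))|^2$ as a simple harmonic function of $2\psi$; the hypothesis $d(\mu,C)\ge\delta$ enters directly as the bound $b=|\pi_P(\mu)|^2\le 1-\delta^2$, which forces the amplitude $R\ge\delta^2/4$ and hence a uniform arc-length bound on $S_\alpha(P)$. Your approach is more computational but self-contained (no auxiliary lemma is needed), while the paper's chart reduction makes the geometric reason for the segment bound more transparent. One minor imprecision: $S_\alpha(P)$ need not literally be a single interval, since $2\psi-\phi$ traverses a full period of $\cos$ as $\psi$ ranges over $(-\pi/2,\pi/2)$ and the superlevel set may split into two pieces at the endpoints; but your total arc-length bound is correct regardless, and that is all the counting argument uses.
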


The proof of this lemma is postponed. We first see why it is
enough to carry out the proof of step \ref{step2}. Let $\kappa\ge 1$ be the maximal number of connected components of  $A \cap G_p(H)\cap \pi_\mu^{-1}(x)$, $x\in N_\mu$.
Applying this lemma with this integer $\kappa$, with 
 $C =\bigcup_{t \in B} \,\tau (A_t \cap G_p(H)_t)$ and
$l =\lambda _p$ ($\mu$ being given by step $1$),
we get a positive constant $\alpha$.

By Lemma \ref{lem_partition_tangents}, we can cover $ (G_p(H)_t \cap A_t)_{\tim}$ by $\frac{\alpha}{2}$-flat families, say $U'_{1,t},\dots,U'_{\omega',t}$, $\omega'\in \N$.  
 Take a cell decomposition $(W_i')_{i\in I'}$ of $\R^m\times N_\mu$ (identify $\R^m \times N_\mu$ with $\R^m \times \R^{n-1}$) which is compatible with the $\pi_\mu(U_i')$, $i\le \omega'$.

 For any  $i\in I'$ the family
$\pi_\mu^{-1} ( W_{i,t }') \cap G_p(H)_t \cap A_t$, $t\in B$, is thus included in the union of no more than $\kappa$ $\frac{\alpha}{2}$-flat families.

 Lemma \ref{lem_direction_generique_droit_proj} thus implies that  for any  $i\in I'$ and any $y \in
\widetilde{\pi}_\mu(\bou(\lambda _p , \frac{ r}{2}))$, there exists
$\widehat{\lambda} \in \bou(\lambda _p,r) \cap
\widetilde{\pi}_\mu ^{-1}(y)$ such that for any $t \in B$:
\begin{equation}\label{existence de lamda chapeau}
d\big(\widehat{\lambda}\, ,\, \tau(\pi^{-1} _\mu(W'_{i,t}
) \cap G_p(H)_t \cap A_t) \big) \geq \frac{\alpha}{2}.
\end{equation}
 Apply the induction hypothesis to get a regular system
 of hypersurfaces $H''=(H_k'',\lambda_k'')_{k\le b''}$ of $B \times N_\mu$ (identify $N_\mu$ with $\R^{n-1}$, up to a  refinement of the partition $\Pa$)
  compatible with  the  $\delta W'_{i,t}$'s.
Do it in
 such a way that all the  $\lambda''_k$
 are elements  of  $\bou(\widetilde{\pi}_\mu(\lambda _p),r')$ (where $r'$ is given by (\ref{eq_def_de_r'})).

 Define now:
\begin{equation}\label{def de H chapeau}
 \widehat{H}_{k,t} := \pi_\mu ^{-1}(H''_{k,t}).
\end{equation}

 The compatibility with  the families $\delta W'_{i,t}$ implies that every $int(G_k(H''))_t$
 is included in  $W'_{i,t}$ for some $i$
 (possibly refining the partition of the parameter space), by the same argument as the one  we used in step \ref{step1} for $G_k(H)$ and the partition
 $(W_i)_{i \in I}$.

As a matter of fact, according to (\ref{existence de lamda
 chapeau}) for   $y=\lambda_k ''$, we know  that for every integer  $ k\leq b''$
  there exists
 $\widehat{\lambda} _k  \in \bou(\lambda_p,r) \cap
\widetilde{\pi}_\mu ^{-1}(\lambda'' _k)$ such that for any $t\in B$:
\begin{equation}\label{existence de lamda chapeau par raport a  G''}
d\big(\, \widehat{\lambda}_k \, ,\, \tau(\pi^{-1}
_\mu(int(G_k(H'')_t)) \cap G_p (H)_t\cap A_t) \big) \geq \frac{\alpha}{2}.
\end{equation}

Let us check that  $\widehat{H}:=(\widehat{H}_k,\widehat{\lambda}_k)_{k \leq \widehat{b}}$ (where $\widehat{b}:=b'')$ is the desired   regular system of hypersurfaces.
For this purpose, observe  that, since neither  $\mu$ nor $-\mu$  belongs to $\bou(\lambda ,\eta)$, we have  for each $k$ (recall that $r \leq \frac{\eta}{4}$ and $\lambda_p\in \bou(\lambda,\frac{\eta}{2})$, as well as  $\widehat{\lambda} _k  \in \bou(\lambda_p,r)$):
$$d(\widehat{\lambda}_k, \pm \mu) \geq r.$$

 By Remark \ref{rem_connexite_pi_tilda}, as $\widehat{\lambda}_k \in \widetilde{\pi}_\mu
 ^{-1}(\lambda_k'')$,
 this implies that the family $\widehat{H}$  fulfills the conditions of Definition \ref{dfn_systemes}.

 Furthermore, as $\bou(\lambda_p ,r) \subset
 \bou(\lambda,\eta)$ (since $r \leq \frac{\eta}{4}$ and $\lambda_p\in \bou(\lambda,\frac{\eta}{2})$), all the $\widehat{\lambda}_k$'s belong to
 $\bou(\lambda,\eta)$. Note also that as $\bou(\lambda_p,r)$ is regular for $H_p \cup H_{p+1}$, the vector
   $\widehat{\lambda} _k$ belongs to $\Lambda_p(H)$. This completes the proof of the second step. \end{proof}

\bigskip

The inconvenience of step \ref{step2} (we would like to apply Lemma \ref{lemme_extension_des_familles}, see step \ref{step4}) is that the provided vector is regular for  $A\cap G_p(H)\cap G_k(\widehat{H})$ (instead of $A\cap G_k(\widehat{H})$).  If $\widehat{H}$ were an extension of the family $H$ constructed in step \ref{step1}, this would be no problem since in this case we would have $G_k (\widehat{H})\subset G_p(H)$ (or $int(G_k(\widehat{H})) \cap int(G_p(H))=\emptyset$). Thus, we will have to find a common extension $\widetilde{H}$ of $H$ and $\widehat{H}$ given by steps \ref{step1} and \ref{step2} respectively. This is what is carried out in the proof of step \ref{step3}.

\medskip

\begin{step}\label{step3}Fix  $p\le b$.
 There exists an extension $\widetilde{H}=(\widetilde{H}_k,\widetilde{\lambda}_k)_{k \leq \widetilde{b}}$ of $H$  which  coincides with $H$ outside $G_p(H)$, and such that for each $k$, $\widetilde{\lambda}_k$ belongs to  $\bou(\lambda,\eta)$ and is regular for  $A \cap G_k(\widetilde{H})\cap G_p(H)$.
\end{step}\begin{proof}[Proof of step \ref{step3}.]  Let $\widehat{H}$ be the regular system given by step \ref{step2} and let $k \leq \widehat{b}$ be an integer. Because $\widehat{\lambda}_k
\in \Lambda_p(H)$, by Proposition \ref{pro_connexite_des_projections},  the sets $H_{p}$ and $H_{p+1}$ are respectively the
graphs for $\widehat{\lambda}_k$ of two
functions $\zeta_{k}$ and $\zeta'_{k}$. Moreover, the set $\widehat{H}_{k}$
is also the graph for  $\widehat{\lambda}_k$ of a
 function $\widehat{\xi}_{k}$. Define:
$$\theta_{k} := \min ( \max( \zeta_{k}, \widehat{\xi}_{k}),\zeta'_{k})$$
in order to get a function whose  graph for $\widehat{\lambda}_k$ is included in $G_p(H)$.
We now define the desired regular  family
 $(\widetilde{H}_k,\widetilde{\lambda}_k)_{1\leq k \leq \widetilde{b}} $ as
follows:

\begin{itemize}
 \item Let  $\widetilde{H}_k:=H_k$  and $\widetilde{\lambda}_k
:=\lambda_k$ if $k < p$.
 \item Let $\widetilde{H}_p :=H_p$ and
$\widetilde{\lambda}_p:= \widehat{\lambda}_1$.
 \item  Let
$\widetilde{H}_{k}$  be the graph of $\theta_{k-p}$ for
$\widehat{\lambda}_{k-p}$, and let $\widetilde{\lambda}_k
:=\widehat{\lambda}_{k-p}$, whenever $ p+1 \leq k \leq
p+\widehat{b}$.
 \item And finally let
$\widetilde{H}_k:=H_{k-\widehat{b}} \: $ and $\:
\widetilde{\lambda}_k:=\lambda_{k-\widehat{b}} \: $ if $
\:p+\widehat{b} +1 \leq k \leq b+\widehat{b} $.
\end{itemize}

Let us check that properties $(i)$ and $(ii)$ of Definition \ref{dfn_systemes} hold for the
family $\widetilde{H}$.
For $k < p-1$, or $k \geq p+\widehat{b}+1$, the result is clear
since the family $\widetilde{H}$ is indeed the family $H$ (after renumbering).

For
$k=p-1$, properties  $(i)$ and $(ii)$ for  $\widetilde{H}$ follow from $(i)$ and $(ii)$ for $H$  and Proposition \ref{pro_connexite_des_projections} since we have assumed $\widehat{\lambda}_1 \in \Lambda_p(H)$.


It remains to check $(i)$ and $(ii)$  for $\widetilde{H}_{k+p}$, with $0 \le k \leq \widehat{b}$. We start with $(i)$.
 By $(i)$ for $\widehat{H}$,
the set $\widehat{H}_{k+1}$ is  the graph for
$\widehat{\lambda}_k$ of  a function
 $\widehat{\xi}'_{k}$ such that   $\widehat{\xi}_{k}\leq
 \widehat{\xi}'_{k}$. Define now: $$\theta'_{k}=\min(\max(\zeta_{k},\widehat{\xi}'_{k}),\zeta'_{k}).$$

 \noindent {\bf Claim.} The graph of $ \theta'_{k}$ for
 $\widehat{\lambda}_k$  is  that of  $\theta_{k+1}$ for
 $\widehat{\lambda}_{k+1}$.

 To see this, note that the graph of $
 \theta'_k$ (resp. $\theta _{k+1}$) for $\widehat{\lambda}_k$ (resp. $\widehat{\lambda}_{k+1}$)  matches with  $\widehat{H}_{k+1}$ over  $E(H_{p+1},\widehat{\lambda}_k) \setminus  E(H_p,\widehat{\lambda}_k)$  (resp. $\widehat{\lambda}_{k+1}$).  But, by Proposition \ref{pro_connexite_des_projections},  the sets
 $E(H_p,l)$ and  $E(H_{p+1},l)$ do not depend on $l \in \Lambda_p(H)$.
As $\widehat{\lambda}_k$ and $\widehat{\lambda}_{k+1}$ both belong
to $ \Lambda_p(H)$, this already shows that the two graphs involved in the above claim match over $int(G_p(H))$.

 The graph of $ \theta'_{k}$ (resp. $\theta _{k+1}$) for
 $\widehat{\lambda}_k$ (resp. $\widehat{\lambda}_{k+1}$)  is also constituted by the points of $H_p \setminus int ( E(\widehat{H}_{k+1},\widehat{\lambda}_{k}))$ (resp. $\widehat{\lambda}_{k+1}$) on the one hand and by  the points of $H_{p+1}\cap E(\widehat{H}_{k+1},\widehat{\lambda}_{k})$ (resp. $\widehat{\lambda}_{k+1}$) on the other hand. By $(ii)$ for $\widehat{H}$, the claim ensues.

This claim proves that $\widetilde{H}_{p+k+1}$, which is by definition the graph   of  $\theta_{k+1}$ for
 $\widehat{\lambda}_{k+1}$, is indeed also the graph of
$\theta'_{k}$ for $\widehat{\lambda}_k$. Therefore,  to check
$(i)$  (for $\widetilde{H}_{k+p}$, $k\leq \widehat{b}$), we just have to prove that $\theta_{k} \leq \theta'_{k}$.
But, as $\widehat{\xi}_{k} \leq \widehat{\xi}'_{k}$, this comes down from the respective  definitions of $\theta'_k $ and
 $\theta_{k}$. 

Let us  check property $(ii)$ for $\widetilde{H}_{k+p}$, for $k\leq \widehat{b}$. Observe first that if $k=\widehat{b}$,  it is then
a consequence of Proposition~\ref{pro_connexite_des_projections}, since
we have assumed that $\widehat{\lambda}_k$ belongs to
$\Lambda_p(H)$.

 Let now $k$ be such that $0 \leq k \leq \widehat{b}-1$. First note
 that by $(ii)$ for $\widehat{H}$ we have:
 $$E(\widehat{H}_{k+1},\widehat{\lambda}_k)=
 E(\widehat{H}_{k+1},\widehat{\lambda}_{k+1}).$$

But,  $E(\widetilde{H}_{k+p+1},\widehat{\lambda}_k)$ (resp. $\widehat{\lambda}_{k+1}$)
is  constituted by the points of $E(H_p,\widehat{\lambda}_k)$ (resp. $\widehat{\lambda}_{k+1}$) together with the points of  $E(H_{p+1},\widehat{\lambda}_k)\cap E(\widehat{H}_{k+1},\widehat{\lambda}_k)$ (resp. $\widehat{\lambda}_{k+1}$).
As   $\widehat{\lambda}_{k+1}$  and $\widehat{\lambda}_k$ both
belong to  $\Lambda_p(H)$, this establishes $(ii)$ for $\widetilde{H}$.

To complete the proof of  step \ref{step3}, it remains to make sure that for every $k\leq \hat{b}$ the vector $\widetilde{\lambda}_{k+p}$  is regular for $G_{k+p}(\widetilde{H}) \cap G_p(H) \cap A$. By definition, we have  $\widetilde{\lambda}_p=\widehat{\lambda}_1$,
   $\widetilde{\lambda}_{k+p}=\widehat{\lambda}_k$ for $1\le k\le \widehat{b}$,  and:
\begin{equation}\label{eq incl gtilde}
G_{k+p} (\widetilde{H})\subset  G_k(\widehat{H}) \cap G_p(H),
\end{equation}
for each $0 \leq k \leq \widehat{b}$.

As for any $k$ the vector $\widehat{\lambda}_k$ is regular for $A\cap G_k (\widehat{H}) \cap G_p(H)$ (see step \ref{step2}),
 this implies that for each $ k \leq \widehat{b}$,
 the vector $\widetilde{\lambda}_{k+p}$ is regular for
 $A\cap G_{k+p}(\widetilde{H}) $. This completes the proof of the third step.\end{proof}

\bigskip

\begin{step}\label{step4}
There is a regular system of hypersurfaces $\check{H}=(\check{H}_k,\check{\lambda}_k)_{k \leq \check{b}} $ compatible with $A$ and such that  $\check{\lambda}_k\in\bou(\lambda,\eta)$ for all $k$. 
\end{step}
\begin{proof}[Proof of step \ref{step4}]
 By Lemma \ref{lemme_extension_des_familles} (applied $(\widetilde{b}+1)$ times to  $\widetilde{H}$  of step \ref{step3}), we may extend
 $\widetilde{H}$  to a regular system $\check{H}=(\check{H}_k,\check{\lambda}_k)_{k \leq \check{b}} $ compatible with  the set
 $$ G_p(H) \cap \bigcup_{k=0}^{\widetilde{b}}G_{k}(\widetilde{H})
 \cap A=G_p(H)\cap A.$$
By  step \ref{step3}, we have  $\check{\lambda}_k\in\bou(\lambda,\eta)$ for all $k$ (see Remark \ref{rem_lem_lignes}). 
 Since $\check{H}$ is an extension of $H$ ($\widetilde{H}$ being itself an extension of $H$) which coincides with $H$ outside
 $G_p(H)$, we may carry out this construction  on all the $G_p(H)$'s  successively.
 This provides the desired regular system.
\end{proof}

\medskip

It remains to prove Lemma \ref{lem_direction_generique_droit_proj}.
 The
 lemma below describes a
property of $\widetilde{\pi}_\mu$ that we need for this purpose.

\begin{lem}\label{lem trans aux fibres pi P tilda}
Let $\mu \in \sph^{n-1}$, $T \in \G^n$, and $x \in T 
$. If $v \in \sph^{n-1}$ is tangent  at $x$ to the curve $\widetilde{\pi}^{-1} _\mu ( \widetilde{\pi} _\mu(x))$ then we have:
$$d(\mu,T) \leq d (v, T).$$
\end{lem}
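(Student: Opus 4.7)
The plan is to explicitly parametrize the fiber $\widetilde{\pi}_\mu^{-1}(\widetilde{\pi}_\mu(x))$ as a (half) great circle on the unit sphere and then express $\mu$ as a linear combination of $x$ and the unit tangent vector $v$, with coefficient of $v$ bounded by $1$ in absolute value. Projecting orthogonally onto $T$ and using that $x\in T$ will give the inequality immediately.

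More concretely, I would first unfold the definition \eqref{eq_pitilda}. Setting $w:=\pi_\mu(x)/|\pi_\mu(x)|\in \sph^{n-1}\cap N_\mu$, the fiber $\widetilde{\pi}_\mu^{-1}(\widetilde{\pi}_\mu(x))$ consists of all unit vectors of the form $a\mu+bw$ with $a^2+b^2=1$ and $b>0$, i.e.\ the open half great circle in the 2-plane $\mathrm{span}(\mu,w)$ joining $-\mu$ to $\mu$ through $w$. In particular, since $x$ belongs to this arc, we may write
\[
x=\sin\theta_0\cdot\mu+\cos\theta_0\cdot w
\]
for some $\theta_0\in(-\pi/2,\pi/2)$, and parametrizing the arc by $u(\theta)=\sin\theta\cdot\mu+\cos\theta\cdot w$ one computes $u'(\theta_0)=\cos\theta_0\cdot\mu-\sin\theta_0\cdot w$, a unit vector. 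Hence the unit tangent $v$ satisfies $\pm v=\cos\theta_0\cdot\mu-\sin\theta_0\cdot w$.

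Inverting this $2\times2$ linear system in the orthonormal basis $(\mu,w)$ yields
\[
\mu=\sin\theta_0\cdot x\pm\cos\theta_0\cdot v.
\]
Let $\pi_T$ denote the orthogonal projection onto $T$. As $x\in T$, we have $\pi_T(x)=x$, so applying $(\mathrm{id}-\pi_T)$ to both sides of the above identity gives $\mu-\pi_T(\mu)=\pm\cos\theta_0\,(v-\pi_T(v))$, whence $d(\mu,T)=|\cos\theta_0|\cdot d(v,T)\le d(v,T)$, as required.

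There is essentially no obstacle here: the only subtlety is to confirm that the fiber really is this half great circle (which is immediate from \eqref{eq_pitilda}) and to keep track of the sign ambiguity in $v$, which is harmless since only the norm matters in the final inequality. The geometric content of the lemma is that the tangent to a $\widetilde{\pi}_\mu$-fiber at $x$ lies in the 2-plane $\mathrm{span}(\mu,x)$, and $\mu$ is precisely the linear combination of $x\in T$ and $v$ with $v$-coefficient $|\cos\theta_0|\le 1$.
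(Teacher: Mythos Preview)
Your proof is correct and follows essentially the same route as the paper: both arguments observe that $x$, $\mu$, and $v$ span a $2$-plane with $(x,v)$ orthonormal, write $\mu=\alpha x+\beta v$ with $|\beta|\le 1$, and use $x\in T$ to conclude. Your explicit parametrization even yields the equality $d(\mu,T)=|\cos\theta_0|\,d(v,T)$, whereas the paper only records the inequality via $d(\mu,T)\le |\mu-(\alpha x+\beta w)|$ for $w\in T$ realizing $d(v,T)$.
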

\begin{proof}
Let $w$ be the vector of $ T$ which realizes $d(v,
T)$. Remark that the vectors $x$, $\mu$, and  $v$ are in the same
two dimensional vector space. Moreover  $(x,v)$ is an orthonormal
basis of this plane.  Write $\mu= \alpha x+ \beta v$ with $\alpha
^2+\beta^2=1$. Then, as $x$ and $w$ both belong to $T$ we have $$d(\mu,T)\leq |\mu-(\alpha x+\beta w)|=|\beta|\cdot |v-w| \leq
d(v, T).$$
\end{proof}

\begin{proof}[Proof of Lemma \ref{lem_direction_generique_droit_proj}]
We will work up to a (``projective'') coordinate system of  $\sph^{n-1}$ defined as follows. Let $U_i ^+$ (resp.  $U_i ^-$) denote
$$\{x \in \sph^{n-1} : x_i \geq \epsilon\}$$
(resp. $x_i \leq -\epsilon$) with $\epsilon >0$.
Define then $h_{i} : U_i^+ \rightarrow \R^{n-1}$ (resp. $g_i:U_i^-\to \R^{n-1}$) by
$h_{i}(x_1,\dots,x_n)=(\frac{x_1}{x_i},\dots,\widehat{\frac{x_i}{x_i}} ,\dots,\frac{x_n}{x_i})$ (resp. $g_i(x_1,\dots,x_n)$, here the $\,\widehat{ }\,$ means the term is omitted).
 We can assume that $\bou(l,r)$ entirely fits in one single $U_i^+$ or $U_i^-$, say $U_i^+$ (up to a linear change of coordinate of $\R^n$). 

Through such a   chart, the elements $\sph^{n-1}\cap T$,   $T \in C$, will be identified with their respective images, which are affine subspaces of $\R^{n-1}$.  The set $\sph^{n-1} \cap N_\mu$ also becomes an affine subspace $\Delta$, and
  $\widetilde{\pi}_\mu$ an
orthogonal projection along a line, say $L$. We denote by $\pi$ this projection.  By
Lemma \ref{lem trans aux fibres pi P tilda} and hypothesis (\ref{hypothese key lemma}),  there exists $u >0$
such that for any $T \in C$ (the angle between affine spaces is defined as the angle between the associated vector spaces):
\begin{equation}\label{eq_preuve_angle_key_lemma}
 \angle(L,T) \geq u.
\end{equation}

 We have to find  $\alpha >0$ such  that
for any $P_1,\dots,P_{\kappa}$ in $C$ and any $y \in
\pi(h_i( \bou(l,\frac{r}{2})))$  there exists
$\widehat{\lambda} \in h_i(\bou(l,r)) \cap
\pi^{-1}(y)$ such that:
\begin{equation}\label{eq_preuve_key_lemma_apres_carte}
 d(\, \widehat{\lambda} , \bigcup_{i=1} ^{\kappa} P_i) \geq \alpha.
\end{equation}

For any $y \in \pi(h_i(\bou(l,\frac{r}{2})))$, the length of $\pi^{-1}(y) \cap h_i(\bou(l,r))$ is
bounded below away from zero by a strictly positive real number $\alpha_0$ ($h_i$ is bi-Lipschitz).

   It is an easy exercise of elementary geometry to derive from (\ref{eq_preuve_angle_key_lemma}) that for any $\alpha>0$ the set $\{x \in \pi^{-1}(y):d(x,T)\le \alpha \}$ is a segment of length not greater than $\frac{2 \alpha}{u}$, for all $T\in C$ and $y \in \Delta$.  


Let $\alpha:=\frac{\alpha_0u}{4\kappa}$.
By the preceding paragraph, we see that if (\ref{eq_preuve_key_lemma_apres_carte}) failed for some $y \in
\pi(h_i(\bou(l,\frac{r}{2})))$, we could cover 
$\pi^{-1}(y) \cap h_i(\bou(l,r))$ by $\kappa$ segments of
length not greater than $\frac{\alpha_0}{2\kappa}$. This contradicts the
fact that the length of $\pi^{-1}(y) \cap h_i(\bou(l,r))$ is not less than $\alpha_0$.
\end{proof}

\subsection{Proof of Theorem \ref{thm_proj_reg_hom_pres_familles}}\label{sect_proof_main}
  By Theorem \ref{thm_existence_des_familles}, there is a definable partition $\Pa$ of $\R^m$ such that for every $B \in \Pa$ there exists a  regular system of
hypersurfaces compatible with $A_{B}$. 
 Fix  $B \in \Pa$ and such a regular system of
hypersurfaces $H=\hk$.

 For each $t\in B$, we shall define the desired definable mapping $h_t$
over $E(H_{k,t},\lambda_k)$  by induction on $k$, in such a way that for all $t\in B$
$$h_t(E(H_{k,t},\lambda_k))=E(F_{k,t} ,e_n),$$  where
$F_{k,t}$ is the graph for $e_n$ of  $\eta_{k,t}: \R^{n-1}\to\R$, with $(\eta_{k,t})_{t \in B}$ uniformly Lipschitz definable family of functions.

For $k=1$, choose an orthonormal  basis of $N_{\lambda_1}$ and set
$h_t(q):=(x_{\lambda_1},q_{\lambda_1})$, where $x_{\lambda_1}$
 stands for the coordinates of $\pi_{\lambda_1}(q)$ in this basis.

Let now $k \geq 1$. By $(i)$ of Definition \ref{dfn_systemes}, for any $t\in B$ the sets $H_{k,t}$ and $H_{k+1,t}$ are the respective
graphs for $\lambda_k$ of two functions $\xi_{k,t}$ and
$\xi'_{k,t}$. For $q \in E(H_{k+1,t},\lambda_{k}) \setminus
E(H_{k,t},\lambda_k)$, extend $h_t$ by defining $h_t(q)$ as the element:
$$h_t\big(\pi_{\lambda_k}(q)+\xi_{k,t} (\pi_{\lambda_k}(q))\cdot \lambda_k\big)
+\big(q_{\lambda_k}-\xi_{k,t} ( \pi_{\lambda_k}(q))\big)e_n.$$

Thanks to the property $(ii)$ of  Definition \ref{dfn_systemes}, we have: $$ E(H_{k+1,t},\lambda_{k+1})=
E(H_{k+1,t},\lambda_{k}),$$ and hence  $h_t$ is actually defined over  $
E(H_{k+1,t},\lambda_{k+1})$. Since $\xi_{k,t}$ is $C$-Lipschitz with $C$ independent of $t$, the $h_t$'s constitute a uniformly bi-Lipschitz family of
 homeomorphisms. Note also that the image of $h_t$ is
$E(F_{k+1,t},e_n)$, where $F_{k+1,t}$ is the graph (for $e_n$) of the  uniformly Lipschitz family of
functions:
$$\eta_{k+1,t}(x):=\eta_{k,t} (x)+(\xi'_{k,t}-\xi_{k,t})
\circ \pi_{\lambda_k}\circ h^{-1}_t(x,\eta_{k,t}(x)),$$
for $(t,x) \in  B \times \R^{n-1}$. This completes the induction step, giving $h_t$ over $E(H_{b,t} , \lambda_b)$. To extend $h_t$ to the
whole of $ \R^n$ do it similarly as in the case $k=1$.

\subsection{Regular vectors and set germs}\label{sect regular vectors and set germs}
For $R$ positive real number and $n>1$  we set   $$\ccn :=\{(t,x) \in [0,+\infty)\times \R^{n-1}: |x|\leq Rt\}. $$ 
We also set $\mathcal{C}_1(R):=[0,+\infty)$. 

Our purpose is to show Theorem \ref{thm_proj_loc}, which is an improvement of Corollary \ref{cor_proj_reg_hom_pres}  asserting that, in the case of germs of subsets of $\mathcal{C}_{n}(R)$, 
 the provided homeomorphism may be  required to preserve the first coordinate in the canonical basis. This fact is an essential ingredient of the Lipschitz conic structure theorem \cite{gvpoincare,livre}, which recently proved very useful to study Sobolev spaces of bounded subanalytic domains  \cite{ poincwirt, linfty, gvpoincare, trace,lprime}.

\begin{dfn}
Let $A, B \subset \R^n$. A definable map $h:A \to B$  is {\bf vertical}\index{vertical} if it
preserves the first coordinate in the canonical basis of $\R^n$, i.e. if for any $t \in \R$, $\pi(h(t,x))=t$, for all $x \in A_t$,  where $\pi:\R^n \to \R$ is the orthogonal projection onto the first coordinate.
\end{dfn}

  We start with a preliminary lemma which is of its own interest. We use the notation $f(t)\ll g(t)$ to express that $f(t)\le g(t)\phi(t)$, for some function $\phi$ tending to zero as $t$ goes to zero.

\begin{lem}\label{lem_lipschitz_param_local}
Let $h:(X,0)\to( \mathcal{C}_n(R),0)$ be a germ of vertical definable  map, with $X\subset \mathcal{C}_n(R')$, for some $R$ and $R'$. If $(h_t)_{t \in \R}$ is uniformly Lipschitz  then $h$ is a Lipschitz map germ.
\end{lem}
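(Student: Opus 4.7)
My plan is to argue by contradiction using the definable curve selection lemma, together with a reduction to the ``comparable'' regime $t_1\sim t_2$ via the cone bound and a rescaling trick adapted to $\mathcal C_n(R)$.

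Suppose that $h$ is not Lipschitz as a germ at~$0$. Then the definable function $(p,q)\mapsto |h(p)-h(q)|/|p-q|$ on $(X\times X)\setminus\{\mathrm{diag}\}$ is unbounded on every neighborhood of $(0,0)$, and the curve selection lemma furnishes two continuous definable arcs $\gamma_i:(0,\varepsilon)\to X$ ($i=1,2$) with $\gamma_i(s)\to 0$, $\gamma_1(s)\ne\gamma_2(s)$, and $|h(\gamma_1(s))-h(\gamma_2(s))|/|\gamma_1(s)-\gamma_2(s)|\to+\infty$ as $s\to 0^+$. Writing $\gamma_i(s)=(t_i(s),x_i(s))$, verticality of $h$ gives
\[
|h(\gamma_1)-h(\gamma_2)|^2=(t_1-t_2)^2+|h_{t_1}(x_1)-h_{t_2}(x_2)|^2,
\]
so the blow-up is carried by the fiber term: $|h_{t_1}(x_1)-h_{t_2}(x_2)|\gg|\gamma_1-\gamma_2|$.

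For the first reduction I use the cone constraint on the image: $|h_{t_i}(x_i)|\le Rt_i$ gives $|h_{t_1}(x_1)-h_{t_2}(x_2)|\le R(t_1+t_2)$, whence, combined with $|t_1-t_2|\le|\gamma_1-\gamma_2|$ and the blow-up, one gets $|t_1-t_2|\ll t_1+t_2$ and $|x_1-x_2|\ll t_1+t_2$ as $s\to 0^+$. So $t_1(s)\sim t_2(s)$, and after reparameterization $t_1(s)=s$ and $t_2(s)=s(1+o(1))$. Next I pass to rescaled coordinates: for $t>0$ set $\tilde h_t(y):=h_t(ty)/t$ (defined on $X_t/t\subset \Bb(0,R')$), which is again $L$-Lipschitz and takes values in $\Bb(0,R)$. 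The coordinatewise definable Lipschitz extension of Proposition~\ref{pro_extension_fonction_lipschitz}, in its parametrized form from Remark~\ref{rem_extension_familles_fonctions_lipschitz}, produces a definable family of $L'$-Lipschitz maps $\bar h_t:\R^{n-1}\to\R^{n-1}$ extending $h_t$, so that $\widetilde{\bar h}_t(y):=\bar h_t(ty)/t$ is $L'$-Lipschitz on all of $\R^{n-1}$.

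Setting $y_i(s):=x_i(s)/t_i(s)$, the elementary computation $t_2|y_1-y_2|\le R'|t_1-t_2|+|x_1-x_2|$ and the identity
\[
h_{t_1}(x_1)-h_{t_2}(x_2)=(t_1-t_2)\widetilde{\bar h}_{t_1}(y_1)+t_2\bigl(\widetilde{\bar h}_{t_1}(y_1)-\widetilde{\bar h}_{t_2}(y_2)\bigr),
\]
combined with the Lipschitz bound $|\widetilde{\bar h}_{t_1}(y_1)-\widetilde{\bar h}_{t_1}(y_2)|\le L'|y_1-y_2|$, absorb every in-fiber contribution, so the blow-up reduces to
\[
t_2(s)\,\bigl|\widetilde{\bar h}_{t_1(s)}(y_2(s))-\widetilde{\bar h}_{t_2(s)}(y_2(s))\bigr|\gg|t_1(s)-t_2(s)|.
\]
The left-hand side is the definable one-variable function $F(s):=(t_2/t_1)\bar h_{t_1}((t_1/t_2)x_2(s))-h_{t_2(s)}(x_2(s))$, which vanishes at $s=0^+$ and has norm $\lesssim s$. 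The main obstacle, and the crux of the argument, will be to show that the definable L'Hopital / monotonicity theorem (applied to $|F(s)|$ against the definable quantity $|t_1(s)-t_2(s)|$) forces $|F(s)|=O(|t_1(s)-t_2(s)|)$. The cone inclusion $X\subset\mathcal C_n(R')$, together with the Lipschitz extension estimate $|\bar h_u(z)|\le Ru+L'\,d(z,X_u)$, is what keeps the extension error linear in $|t_1-t_2|$, making this bound available and contradicting the blow-up.
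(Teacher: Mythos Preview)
Your setup through the curve selection lemma and the reduction $t_1(s)\sim t_2(s)$ is fine and matches the paper's first moves. But the argument breaks down precisely at the step you yourself flag as ``the crux'': you need
\[
t_2(s)\,\bigl|\widetilde{\bar h}_{t_1(s)}(y_2(s))-\widetilde{\bar h}_{t_2(s)}(y_2(s))\bigr|=O(|t_1(s)-t_2(s)|),
\]
and nothing you have written establishes this. The monotonicity/L'H\^opital theorem compares the \emph{orders} of two one-variable definable functions via their derivatives, but you have no control on the $t$-derivative of $t\mapsto \bar h_t(z)$ at points $z\notin X_t$; the Lipschitz extension is only Lipschitz in the space variable, not in $t$. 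The estimate $|\bar h_u(z)|\le Ru+L'\,d(z,X_u)$ bounds the \emph{size} of the extension, not the difference $\bar h_{t_1}-\bar h_{t_2}$, and $d(z,X_{t_1})$ need not be $O(|t_1-t_2|)$ for an arbitrary definable $X$. In short, you have reduced the problem to controlling the parameter dependence of the extended family, which is essentially what you are trying to prove; the argument is circular at this point, and the rescaling/extension machinery has bought you nothing.

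The paper's proof sidesteps this entirely with a one-line idea you are missing: instead of extending $h_t$ off $X_t$, stay inside $X$ by sliding along the arc $p$. Parametrize $p$ by its first coordinate, let $t'$ be the first coordinate of $q(t)$, and replace $p(t)$ by $p(t')$. Because $p$ and $h\circ p$ are definable arcs lying in cones $\mathcal C_n(R')$ and $\mathcal C_n(R)$ with first coordinate equal to the parameter, one has $|p(t)-p(t')|\sim|t-t'|$ and $|h(p(t))-h(p(t'))|\sim|t-t'|$, both $\le |p(t)-q(t)|$. Now $p(t')$ and $q(t)$ share the same first coordinate $t'$, so the uniform Lipschitz bound on $h_{t'}$ gives $|h(p(t'))-h(q(t))|\lesssim|p(t')-q(t)|\lesssim|p(t)-q(t)|$, and the triangle inequality yields $|h(p(t))-h(q(t))|\lesssim|p(t)-q(t)|$, the desired contradiction. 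No extension, no rescaling, no L'H\^opital.
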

\begin{proof}
Suppose that  $h$ fails to be Lipschitz.
 Then, by Curve Selection Lemma,  we can find two definable arcs in $X$,
say $p(t)$ and $q(t)$, tending to zero along which:
\begin{equation}\label{eq hyp p et q }
|p(t)-q(t)| \ll |h(p(t))-h(q(t))|.
\end{equation}

 We may assume that  $p(t)$ (and so $h(p(t))$) is parametrized by its first coordinate (since the first coordinate of  $p(t)$ induces a homeomorphism from a right-hand-side neighborhood of zero in $\R$ onto a right-hand-side neighborhood of zero in $\R$), i.e. we may assume $p(t)=(t,p_2(t),\dots,p_n(t))$.

 As  $p(t)$ and $h(p(t))$ are definable arcs in $\C_n(R')$ and $\C_n(R)$ respectively, we
have:
\begin{equation}\label{eq proof r lips along h(p)}
|h(p(t))-h(p(t'))| \sim |t-t'|\leq |p(t)-q(t)|
\end{equation} and
\begin{equation}\label{eq proof r lips along p}
|p(t)-p(t')| \sim |t-t'| \leq |p(t)-q(t)|,
\end{equation}
where $t'$ denotes the first coordinate  of $q(t)$.

Therefore, we can easily derive from  (\ref{eq hyp p et q }), (\ref{eq proof r lips
along h(p)}), and (\ref{eq proof r lips along p}): 
$$|h(p(t))-h(q(t))| \sim |h(p(t'))-h(q(t))| \sim |p(t')-q(t)|\lesssim |p(t)-q(t)|,$$
%
 in contradiction with (\ref{eq hyp p et q }). 
\end{proof}

\begin{thm}\label{thm_proj_loc}
Let $X$ be the germ at $0$ of a definable  subset of $\mathcal{C}_{n}(R)$ (for some $R$) of
empty interior. There exists a germ of
vertical bi-Lipschitz definable homeomorphism (onto its image) $H:(\C_{n}(R),0)\to
(\C_n(R),0)$ such that $e_{n}$ is regular for $H(X)$.
\end{thm}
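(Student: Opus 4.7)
The plan is to view the germ $X$ as a definable family $(X_t)_t$ parameterized by its first coordinate, apply the parametric regular vector theorem (Theorem \ref{thm_proj_reg_hom_pres_familles}) to the slices $X_t \subset \R^{n-1}$, and assemble the slicewise homeomorphisms into a vertical germ $H$. Lemma \ref{lem_lipschitz_param_local} will be the crucial tool, invoked twice: once to control the Lipschitz character of $H$ as a germ, and once to control the joint Lipschitz character of the functions whose graphs describe $H(X)$.

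More precisely, I first shrink $X$ to a germ at $0$ in $\mathcal{C}_n(R)$ so that, by cell decomposition, each slice $X_t\subset\{x\in\R^{n-1}:|x|\le Rt\}$ has empty interior in $\R^{n-1}$. Applying Theorem \ref{thm_proj_reg_hom_pres_familles} with $m=1$ and ambient dimension $n-1$ gives a uniformly $L$-bi-Lipschitz definable family of homeomorphisms $h_t:\R^{n-1}\to\R^{n-1}$ such that $e_{n-1}$ is regular for the family $(h_t(X_t))_t$. Replacing $h_t$ by $h_t-h_t(0)$ (a pointwise translation, which preserves both the Lipschitz constants and the regular vector) I may arrange $h_t(0)=0$. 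Set $H_0(t,x):=(t,h_t(x))$: this $H_0$ is vertical and maps $\mathcal{C}_n(R)$ into $\mathcal{C}_n(LR)$. Lemma \ref{lem_lipschitz_param_local} applied to $H_0$ and to $H_0^{-1}$ (whose slices are also uniformly Lipschitz) shows that $H_0$ is a bi-Lipschitz germ. Post-composing with the vertical linear scaling $S(t,y):=(t,y/L)$, which is a bi-Lipschitz homeomorphism from $\mathcal{C}_n(LR)$ onto $\mathcal{C}_n(R)$, yields the desired vertical bi-Lipschitz germ $H:=S\circ H_0:(\mathcal{C}_n(R),0)\to(\mathcal{C}_n(R),0)$.

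The main obstacle is to verify that $e_n\in\R^n$ is regular for $H(X)$, not merely that $e_{n-1}$ is regular for each slice $h_t(X_t)\subset\R^{n-1}$. I plan to apply Proposition \ref{pro_proj_reg_decomposition_en_graphes} to the family $(h_t(X_t))$, obtaining uniformly Lipschitz definable families of functions $\xi_{i,t}:B_{i,t}\to\R$ on subsets of $N_{e_{n-1}}\cong\R^{n-2}$, $i=1,\dots,p$, such that $h_t(X_t)=\bigcup_i\Gamma^{e_{n-1}}_{\xi_{i,t}}$ for every $t$. Consequently $H_0(X)=\bigcup_i\Gamma^{e_n}_{\xi_i}$ where $\xi_i(t,z):=\xi_{i,t}(z)$ is defined on a definable set $B_i\subset\mathcal{C}_{n-1}(LR)$. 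Since each vertical map $(t,z)\mapsto(t,\xi_i(t,z))$ has uniformly Lipschitz slices, Lemma \ref{lem_lipschitz_param_local} (invoked a second time) yields that $\xi_i$ is a Lipschitz function germ in the joint variables $(t,z)$. By Proposition \ref{pro_proj_reg_decomposition_en_graphes} (case $m=0$), $e_n$ is therefore regular for $H_0(X)$; bi-Lipschitzness of the linear map $S$ transfers this regularity to $H(X)=S(H_0(X))$. The crux is precisely this second application of Lemma \ref{lem_lipschitz_param_local}: verticality of $H_0$ forces the tangent spaces of $H_0(X)$ at smooth points to contain ``time-derivative'' directions that a priori could lie close to $e_n$, and it is the lemma that rules this out by upgrading slicewise uniform Lipschitzness to joint Lipschitzness in $(t,z)$.
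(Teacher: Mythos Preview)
Your proposal is correct. The construction of $H$ (apply Theorem~\ref{thm_proj_reg_hom_pres_familles} fiberwise, normalize $h_t(0)=0$, rescale into $\mathcal C_n(R)$, invoke Lemma~\ref{lem_lipschitz_param_local} for the bi-Lipschitz germ property) matches the paper almost verbatim; the genuine difference is in the verification that $e_n$ is regular for $H(X)$. The paper argues by contradiction via curve selection: if $e_n\in\tau:=\lim T_{\gamma(t)}Y_{reg}$ along an arc $\gamma\to 0$ in $Y=H(X)$, then comparing $\tau\cap N_{e_1}$ with $\lim(T_{\gamma(t)}Y\cap N_{e_1})$ (the latter cannot contain $e_n$, by fiberwise regularity of $e'_{n-1}$) forces $\tau$ to be non-transverse to $N_{e_1}$, hence $e_1\perp\tau$; but then $\lim\gamma/|\gamma|\in\tau$ is orthogonal to $e_1$, contradicting $\gamma\subset\mathcal C_n(R)$. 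Your route is softer and more structural: you pass through the graph characterization of regularity (Proposition~\ref{pro_proj_reg_decomposition_en_graphes}) and invoke Lemma~\ref{lem_lipschitz_param_local} a second time to upgrade the slicewise Lipschitz functions $\xi_{i,t}$ to jointly Lipschitz germs $\xi_i$, thereby avoiding any explicit limit-of-tangent-spaces computation. One small caveat worth making explicit: Lemma~\ref{lem_lipschitz_param_local} as stated has source and target in the same $\mathcal C_n$, whereas your second application sends $B_i\subset\mathcal C_{n-1}(LR)\subset\R^{n-1}$ into $\R^2$; the lemma's proof nowhere uses equality of ambient dimensions and goes through verbatim, but you should either remark this or pad the target, e.g.\ apply it to $(t,z)\mapsto(t,0,\dots,0,\xi_{i,t}(z))\in\mathcal C_{n-1}(LR)$.
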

\begin{proof}
We denote by $e_1,\dots,e_n$ the canonical basis of $\R^{n}$ and by $e_1',\dots,e_{n-1}'$   the canonical basis of $\R^{n-1}$ (so that $e_n=(0,e'_{n-1})$).
 Apply Theorem \ref{thm_proj_reg_hom_pres_familles} to $X$, regarded as a family of $\R\times \R^{n-1}$.
This provides a uniformly bi-Lipschitz family of homeomorphisms $h_t:\R^{n-1} \rightarrow
\R^{n-1}$, $t\in (0,\ep)$,  such that $e'_{n-1}$ is regular for the family 
$(h_t(X_t))_{t \in \R}$.   Up to a family of translations, we may
assume that $h(t,0)\equiv 0$, which implies that $H:(\R^n,0)\to (\R^n ,0)$, $(t,x)\mapsto (t,h_t(x))$, maps $\C_{n}(R)$ into
$\C_n(R')$ for some $R'$ (and up to a family of homothetic transformations we may  assume $R=R'$). 
By Lemma \ref{lem_lipschitz_param_local}, the map-germ $H$  is bi-Lipschitz near the origin.   

 We now have to check
 that $e_n$ is regular for  the germ of the definable set  $Y:=H(X)$. Suppose not. Then, by Curve Selection Lemma,  there exists
 a definable arc $\gamma : [0,\epsilon]\rightarrow Y_{reg}$
 with $\gamma(0)=0$ and  $e_n \in \tau :=\lim_{t\to 0} T_{\gamma(t)}
 Y_{reg}$. But, as $e'_{n-1}$ is regular for the family
 $(Y_{t})_{t\in [0,\ep]}$, we have $e'_{n-1}\notin \lim_{t \to 0} T_{\tilde{\gamma}(t)}
 Y_{\gamma_{1}(t)}$, if $\gamma(t)=(\gamma_1(t),\tilde{\gamma}(t))\in \R \times \R^{n-1}$. This implies that: $$\tau \cap N_{e_1} \neq \lim_{t\to 0}\, (T_{\gamma(t)}
 Y_{reg} \cap N_{e_1})$$
(since the latter does not contains the vector  $e_{n}=(0,e'_{n-1})$ while the former does). Hence, $\tau$ cannot be transverse to  $ N_{e_1}
 $ (since otherwise the intersection with the limit would be the limit of the intersection)  which means that  $e_{1}$ is orthogonal to  $\tau $. This implies that the limit
 vector  $\lim_{t\to 0}
 \frac{\gamma(t)}{|\gamma(t)|} =\lim_{t\to 0}
 \frac{\gamma'(t)}{|\gamma'(t)|}\in\tau $ is orthogonal to $e_{1}$, from which we can conclude
  $$\lim_{t \to 0} \frac {\gamma_{1}(t)}{|\gamma(t)|}=0,$$
  in contradiction with  $\gamma(t) \in \C_n(R)$.
\end{proof}
\end{section}

\begin{section}{Definable bi-Lipschitz triviality in polynomially bounded o-minimal structures}\label{sect_triviality}
The results of this section are valid {\it under the extra assumption that the structure is polynomially bounded}. It is not difficult to produce counterexamples to these results (except however to Proposition \ref{pro_lipschitz_parametres}) as soon as this assumption fails. We are going to establish  a bi-Lipschitz triviality theorem for definable families (Theorem \ref{hardt}), from which we will derive a stratification result (Corollary \ref{cor_strat_bil_triv}).

We denote by $\mathscr{F} $ the valuation field of the structure, which is the subfield of $\R$ constituted by all the real numbers $\alpha$ for which the function $(0,+\infty)\ni x\mapsto x^\alpha\in \R$ is definable.

\begin{dfn}\label{dfn_trivialite_bilip}
Let $A\in \s_{m+n}$. We will say
that $A$ is \textbf{definably bi-Lipschitz trivial along} $U
\subset \R^m$ if it is definably topologically trivial along this set, with a trivialization
  $h_t:A_{t_0} \to A_t$ which is bi-Lipschitz  for every $t\in U$.
\end{dfn}

We shall show that this happens for generic parameters (Theorem \ref{hardt}).  We first recall a result that we shall need (restated with the notations of the present article), sometimes called ``the preparation theorem for definable functions''.
\begin{thm}\label{thm_prep}\cite[Theorem $2.1$]{vds}
Given some definable functions $f_1,\dots,f_l:\R^{n} \to \R$, there is a
definable partition $\C$ of $\R^{n}$  such that for each set $S \in \C$, there
are exponents $\alpha_1 ,\dots, \alpha_l \in   \mathscr{F}$,  as well as definable functions $\theta, a_1 ,\dots, a_l :\R^{n-1}\to \R$, satisfying $\Gamma_\theta\cap S=\emptyset$ and for $x=(\xt,x_n)\in S \subset \R^{n-1}\times \R$ and $i\in \{1,\dots, l\}$:  $$f_i(x) \sim |x_n-\theta(\xt)|^{\alpha_i}a_i(\xt).$$
\end{thm}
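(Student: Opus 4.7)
The plan is to proceed by induction on the number $l$ of functions. The case $l=1$ is the core analytic input; the step from $l-1$ to $l$ is a bookkeeping exercise that harmonizes the centers produced by successive applications of the single-function case.

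For $l=1$, start by applying $\cc^1$ cell decomposition compatible with $f_1$, so that on each cell $S$ the function $f_1$ is continuous and either identically zero or nonvanishing. If $S$ is a graph (not of top dimension over $\R^{n-1}$) one may set $\alpha_1=0$ and $a_1\equiv f_1$, so the content lies in bands $S=(\gamma_1,\gamma_2)$ over a cell $D\subset \R^{n-1}$. For each $\widetilde{x}\in D$, view $x_n\mapsto f_1(\widetilde{x},x_n)$ as a definable one-variable function on $(\gamma_1(\widetilde{x}),\gamma_2(\widetilde{x}))$. Here polynomial boundedness is essential: it forces any such one-variable germ at the endpoint to satisfy $f_1(\widetilde{x},x_n)\sim c(\widetilde{x})\,|x_n-\gamma_i(\widetilde{x})|^{\alpha(\widetilde{x})}$ for some exponent $\alpha(\widetilde{x})\in \mathscr{F}$ and some $c(\widetilde{x})\in \R$. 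The valuation exponent $\alpha(\widetilde{x})$ is itself definable in $\widetilde{x}$, and by o-minimality applied to the graph of $\widetilde{x}\mapsto \alpha(\widetilde{x})$ one refines $D$ into finitely many subcells on which $\alpha$ is constant. Depending on whether $S$ lies closer (in the $x_n$-direction) to $\gamma_1$ or to $\gamma_2$, one picks $\theta=\gamma_i$ as the center; a further split of $S$ into two bands separated by the graph $\{x_n=(\gamma_1+\gamma_2)/2\}$ ensures the relation $f_1(x)\sim |x_n-\theta(\widetilde{x})|^{\alpha_1}a_1(\widetilde{x})$ holds on each piece, with $\Gamma_\theta\cap S=\emptyset$ by construction.

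For the inductive step, suppose the statement is proved for any collection of at most $l-1$ functions. Apply the hypothesis to $(f_1,\dots,f_{l-1})$ to obtain a partition $\mathcal{C}'$ with common center $\theta'$ and exponents $\alpha_1,\dots,\alpha_{l-1}$. Apply the $l=1$ case to $f_l$ to obtain a partition $\mathcal{C}''$ with center $\theta''$ and exponent $\alpha_l$. Take the common refinement $\mathcal{C}$ of $\mathcal{C}'$ and $\mathcal{C}''$, then split each $S\in\mathcal{C}$ further so that the sign of $\theta''-\theta'$ and the sign of $x_n-\theta'$, $x_n-\theta''$ are constant. On each resulting piece exactly one of the inequalities $|x_n-\theta''|\ll |\theta'-\theta''|$, $|x_n-\theta''|\sim |\theta'-\theta''|$, or $|x_n-\theta''|\gg |\theta'-\theta''|$ holds, and in each regime $|x_n-\theta''(\widetilde{x})|$ is equivalent either to $|x_n-\theta'(\widetilde{x})|$ or to a definable function of $\widetilde{x}$ alone. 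Substituting this into the preparation of $f_l$ rewrites it in the desired form with the common center $\theta:=\theta'$; on pieces where $x_n=\theta'$ would fall in $S$, one excludes it via an additional refinement so that $\Gamma_\theta\cap S=\emptyset$.

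The main obstacle is the one-variable preparation inside the $l=1$ step. Producing the asymptotic $f_1(\widetilde{x},x_n)\sim c(\widetilde{x})|x_n-\theta(\widetilde{x})|^{\alpha(\widetilde{x})}$ uniformly in the parameter $\widetilde{x}$, rather than just pointwise, requires reworking of o-minimal machinery: one must know that the ``leading exponent'' and ``leading coefficient'' operations are themselves definable, and that in a polynomially bounded structure the set of exponents arising from $f_1$ on a fixed cell is finite. Once this uniformity is in hand (it is essentially the Loi--Miller--van den Dries Puiseux-type result for polynomially bounded structures), the rest of the proof is the combinatorial harmonization described above. If this uniformity is taken as a black box, what remains is elementary.
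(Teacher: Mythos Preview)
The paper does not prove this theorem: it is quoted with a citation to van den Dries--Speissegger and then used as an input in the proof of Lemma~\ref{lem_eq_dist_cor_th_prep}. There is therefore no proof in the paper to compare your argument against.

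On its own merits, your outline has the standard architecture, and you correctly isolate the real difficulty as the \emph{uniform} (in the parameter $\xt$) one-variable preparation, which you take as a black box. One step in your inductive harmonization of centers is not right as written. You assert that in each of the three regimes, $|x_n-\theta''(\xt)|$ is equivalent either to $|x_n-\theta'(\xt)|$ or to a function of $\xt$ alone. This fails precisely in the regime $|x_n-\theta''|\ll |\theta'-\theta''|$: there $x_n$ ranges freely near $\theta''$, so $|x_n-\theta''|$ is neither $\sim$ to a function of $\xt$ alone nor $\sim |x_n-\theta'|$ (the latter being $\sim |\theta'-\theta''|$ on that piece). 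Consequently you cannot force the common center to be $\theta'$ there. The easy fix is to swap roles on that piece: take $\theta:=\theta''$ as the common center, and rewrite $f_1,\dots,f_{l-1}$ instead, using that $|x_n-\theta'|\sim|\theta'-\theta''|$ depends only on $\xt$, so each $f_i$ with $i<l$ becomes $\sim$ a function of $\xt$ alone (new exponent $0$). With this correction the bookkeeping goes through; the substantive content remains the parametric one-variable result you invoke.
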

This theorem will be needed to establish the following lemma.

\begin{lem}\label{lem_eq_dist_cor_th_prep}
Let $\xi :  \R^{m+n} \rightarrow \R$ be a definable nonnegative function.
There exist some definable subsets of $\R^{m+n}$, say $W_1,\dots,W_k$,  and a  definable  partition $\Pa$ of $\R^{m+n}$ such that for any $V \in \Pa$ there are  $\alpha_1,\dots,\alpha_k$ in $\mathscr{F} $ such that for each $\tim$ we have on $V_t \subset \R^n$:  \begin{equation}\label{eq_equiv_dist}\xi_t(x) \sim d(x,W_{1,t})^{\alpha_1} \cdots d(x,W_{k,t})^{\alpha_k}   .
      \end{equation}

\end{lem}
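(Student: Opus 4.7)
The plan is to apply the preparation theorem (Theorem \ref{thm_prep}) iteratively to $\xi$ viewed as a function on $\R^{m+n}$, peeling off one variable at a time, so as to express $\xi$ as a product of ``vertical'' differences $|y_i-\theta_i(y_1,\dots,y_{i-1})|^{\alpha_i}$ raised to exponents in $\mathscr{F}$, and then to translate each such vertical difference into a genuine Euclidean distance to a definable hypersurface by exploiting the equivalence of inner and outer metrics provided by Theorem \ref{thm_kp}.

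Concretely, a first application of Theorem \ref{thm_prep} to $\xi$ yields a definable partition of $\R^{m+n}$ and, on each cell, an equivalence $\xi(t,x)\sim |x_n-\theta_n(t,x_1,\dots,x_{n-1})|^{\alpha_n}\, a_n(t,x_1,\dots,x_{n-1})$ with $\alpha_n\in\mathscr{F}$ and $\sim$-constants depending only on the cell. Iterating this procedure on $a_n$ (which is a definable function on $\R^{m+n-1}$), and then on each successive remainder, after $m+n$ iterations we arrive at a definable partition $\Pa$ of $\R^{m+n}$ and, on each $V\in\Pa$, an equivalence
\begin{equation*}
\xi(t,x)\sim c\prod_{i=1}^{n}|x_i-\theta_i(t,x_1,\dots,x_{i-1})|^{\alpha_{i}}\prod_{j=1}^{m}|t_j-\phi_j(t_1,\dots,t_{j-1})|^{\beta_j},
\end{equation*}
with $c>0$ a constant, exponents in $\mathscr{F}$, and $\theta_i, \phi_j$ definable.

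To convert these vertical differences into Euclidean distances in $\R^n$, I would refine $\Pa$ via a cell decomposition making every $\theta_i$ and $\phi_j$ of class $\mathcal{C}^1$, and apply Theorem \ref{thm_kp} to guarantee that inner and outer metrics of every refined cell are equivalent with constants depending only on the ambient dimension. This ensures that each $\theta_i(t,\cdot)$ is Lipschitz in $(x_1,\dots,x_{i-1})$ with Lipschitz constant independent of $t$; then, setting $W_i:=\{(t,x)\in\R^{m+n}:x_i=\theta_i(t,x_1,\dots,x_{i-1})\}$, a direct estimate using the Lipschitz property of $\theta_i$ gives $d(x,W_{i,t})\sim|x_i-\theta_i(t,\ldots)|$ on each cell, with constants independent of $t$. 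An analogous construction applied to each $\phi_j$ produces a further family of sets $W_{n+j}\subset\R^{m+n}$ whose fibers realize $|t_j-\phi_j|$ as a Euclidean distance in $\R^n$, after refining $\Pa$ so that the fibers $V_t$ of the partition sit appropriately (of small enough diameter relative to the $t$-factor) with respect to these sets; collecting all these into $W_1,\dots,W_k$ with exponents $\alpha_1,\dots,\alpha_k\in\mathscr{F}$ gives the statement, the multiplicative constant $c$ being absorbed into the $\sim$-constant of $V$.

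The main technical point is the uniform-in-$t$ Lipschitz control of the prepared functions $\theta_i$: without it, the passage from $|x_i-\theta_i|$ to $d(x,W_{i,t})$ would involve $t$-dependent constants. This uniformity is exactly the content of Theorem \ref{thm_kp}. A secondary subtlety is the encoding of the $t$-only factors $|t_j-\phi_j|$ as Euclidean distances in $\R^n$, which forces the partition $\Pa$ to be refined so that each fiber $V_t$ is geometrically well-positioned with respect to the hyperplane-type sets $W_{n+j}$ used to represent these $t$-factors.
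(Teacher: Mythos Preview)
There is a genuine gap in your argument at the step where you claim that refining via Theorem~\ref{thm_kp} makes each $\theta_i(t,\cdot)$ uniformly Lipschitz. Theorem~\ref{thm_kp} says that the inner and outer metrics of the \emph{cells} are equivalent; it says nothing about the functions $\theta_i$ produced by the preparation theorem, which are only known to satisfy $\Gamma_{\theta_i}\cap S=\emptyset$. Making $\theta_i$ of class~$\cc^1$ on each cell does not bound its derivative: take $m=0$, $n=2$, and $\xi(x_1,x_2)=|x_2-\sqrt{x_1}|$ on $\{x_1>0\}$. Here $\theta(x_1)=\sqrt{x_1}$ has unbounded derivative near $0$, and the vertical distance $|x_2-\sqrt{x_1}|$ is \emph{not} equivalent to $d((x_1,x_2),\Gamma_\theta)$ near the origin, since the graph has a vertical tangent there. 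No cell refinement in the $x_1$-direction repairs this.

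This is precisely the difficulty the paper's proof is organized to avoid. Instead of trying to make $\theta$ Lipschitz, the paper first uses Proposition~\ref{pro_dec_L_regulier_avec_proj_fixees} to rotate coordinates so that each open cell $C$ is a band $(\eta_1,\eta_2)$ between \emph{uniformly Lipschitz} families $\eta_1<\eta_2$. Since $\Gamma_\theta\cap C=\emptyset$, one has (say) $\theta\le\eta_1$ on the base, and then the decomposition
\[
x_n-\theta_t(\xt)=(x_n-\eta_{1,t}(\xt))+(\eta_{1,t}(\xt)-\theta_t(\xt))
\]
reduces the problem to two manageable pieces: $|x_n-\eta_{1,t}(\xt)|\sim d(x,\Gamma_{\eta_{1,t}})$ because $\eta_{1,t}$ \emph{is} Lipschitz, while $|\eta_{1,t}-\theta_t|$ is an $(n-1)$-variable function handled by induction on $n$. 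The induction is on the fiber dimension $n$, not on $m+n$.

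A secondary point: your attempt to encode the $t$-only factors $|t_j-\phi_j|$ as distances in $\R^n$ is unnecessary and, as written, does not work (the fiber $W_{n+j,t}$ would be either empty or all of $\R^n$). The statement allows the $\sim$-constants to depend on $t$ (see the remark following the lemma), so these factors are simply absorbed into the equivalence constants.
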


\begin{proof}
We prove it by induction on $n$. For $n=1$, the result follows from
 Theorem \ref{thm_prep}. 
 Let  $n\geq 2 $ and assume that the proposition is true for $(n-1)$. Let  $\lambda_1,\dots,
\lambda_N$ be the elements of $\sph^{n-1}$ given by Proposition \ref{pro_dec_L_regulier_avec_proj_fixees}.

 For each $i$, applying the Preparation Theorem (Theorem \ref{thm_prep})  to $\xi \circ \Lambda_i: \R^{m+n} \to \mathbb{R}$, where $\Lambda_i$ is an orthogonal linear mapping of $\R^{m+n}$ sending $(0_{\R^m}, e_n)$ onto $(0_{\R^m},\lambda_i)$ and preserving the $m$ first coordinates (below, we sometimes regard $\Lambda_i$ as a transformation of $\R^n$),   we obtain a partition of   $\R^{m+n}$. The images of all the elements of this partition under the map $\Lambda_i$ provide a new partition of $\R^{m+n}$, denoted by $\Pa_i$. Let $ (V_j)_{j \in J}$ be a common refinement of the  $\Pa_i$'s. 
Applying
Proposition \ref{pro_dec_L_regulier_avec_proj_fixees} to the finite family
constituted by all the sets of the partition  $(V_j)_{j \in J}$, we get a partition $\Sigma$ of $\R^{m+n}$ into cells.

Let $E \in \Sigma$ be an open cell.  By construction and Proposition \ref{pro_dec_L_regulier_avec_proj_fixees}, there  is $i\le N$  such that $\lambda_i$ is regular for $\delta E $.  It means that $e_n$ is regular for the family $(\Lambda_i^{-1}(\delta E)_t)_{\tim}$.  Hence, it follows from  Proposition \ref{pro_proj_reg_decomposition_en_graphes} that there is a partition $\mathcal{Q}_E$  of $\Lambda_i^{-1}(cl(E))$ into cells, such that each element $C$ is either the graph of a uniformly Lipschitz family of functions  or a set of the form
\begin{equation}\label{eq_cell_sim_dist}
C = \{(z,y)\in B\times \R : \eta_{1}(z) < y < \eta_{2}(z)\},
\end{equation}
with $B\in \s_{m+n-1}$ and 
 $\eta_1< \eta_2$   definable functions on $B$ such that $(\eta_{1,t})_{\tim}$ and  $(\eta_{2,t})_{\tim}$ are uniformly Lipschitz.
 
Observe that it suffices to show the desired statement for the  restriction to each cell  $C\in \mathcal{Q}_E$ of the family of  functions $\xi_t\circ \Lambda_i$, $\tim$. For the elements $C$ of the partition $\mathcal{Q}_E$ which are graphs of some uniformly Lipschitz family of functions, one
may easily  deduce the result from the induction hypothesis.

Fix thus a cell $C\subset \Lambda_i^{-1}(E)$ as in (\ref{eq_cell_sim_dist}).  There is $j$ such that $C\subset \Lambda_i^{-1}(V_j)$. By construction,
there are $r \in \mathscr{F} $ and some functions $a$ and $\theta$ on the basis $B$ of $C$ such that for $x=(\xt,x_n)\in C_t$, $\tim$, we have:$$\xi_t\circ \Lambda_i(x)\sim a_t(\xt)|x_n-\theta_t(\xt)|^r.$$    
 Thanks to the induction hypothesis we thus only have to check the result for the function $|x_n-\theta_t(\xt)|$.

 As $\Gamma_\theta\cap C=\emptyset$,
 we  can assume that for every $(t,\xt)\in B$,   either
$\theta_t(\xt) \leq \eta_{1,t}(\xt)$ or  $ \theta_t(\xt) \geq \eta_{2,t}(\xt)$.
Assume for instance that $\theta_t(\xt) \leq \eta_{1,t}(\xt)$. 
Writing for $t \in\supp_m(C)$ and $x=(\xt,x_n) \in C_{t}$: $$x_n -
\theta_t(\xt)=(x_n-\eta_{1,t}(\xt))+(\eta_{1,t}(\xt)-\theta_t(\xt)),$$ we see that (up to a partition of $C$ we may assume that the terms of the right-hand-side are comparable for the partial order relation $\le$)
$|x_n - \theta_t(\xt)|$
 is $\sim$ either to $|x_n-\eta_{1,t}(\xt)|$ or to
 $|\eta_{1,t}(\xt)-\theta_t(\xt)|$.

 For the latter functions, since they are $(n-1)$-variable functions, the desired result is a consequence of the induction
  hypothesis. Moreover, since
 $\eta_{1,t}$ is Lipschitz,  $|x_n -
 \eta_{1,t}(\xt)|$ is $\sim$ to the distance to the graph of $\eta_{1,t}$
  for every $t$. This shows the result for the given  cell $C$.
\end{proof}

\begin{rem} The constants of the equivalence in the above lemma depend on $t$. However, the family of exponents $\alpha_1,\dots,\alpha_k$  just depends on $V\in \Pa$.
\end{rem}
We recall that the structure is assumed to be polynomially bounded in this section. 
\begin{thm}\label{hardt}
Given $A\in \s_{m+n}$, there
exists a definable  partition of $\R^m$ such that  $A$
is definably bi-Lipschitz trivial along each  element of this
partition.
\end{thm}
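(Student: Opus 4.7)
The plan is an induction on the fiber dimension $n$, with the parameterized regular vector theorem (Theorem \ref{thm_proj_reg_hom_pres_familles}) doing the geometric reduction and Lemma \ref{lem_eq_dist_cor_th_prep} providing the analytic control that fails outside the polynomially bounded setting. The base case $n=0$ is immediate, as fibers are finite.

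For the inductive step, first apply Theorem \ref{thm_proj_reg_hom_pres_familles} to the family $(\delta A_t)_{t\in \R^m}$, which has empty interior in each fiber, to produce a uniformly bi-Lipschitz family $(h_t)_{t\in\R^m}$ of homeomorphisms of $\R^n$ so that $e_n$ is regular for $(h_t(\delta A_t))_t$. Since conjugating a trivialization by such a family preserves uniform bi-Lipschitz triviality, we may assume $e_n$ itself is regular for $(\delta A_t)_t$. Proposition \ref{pro_proj_reg_decomposition_en_graphes}, together with a cell decomposition compatible with $A$, then displays each open $n$-dimensional cell of $A$ as a band $\xi_{i,t}(x')<y<\xi'_{i,t}(x')$ over a base $D_i\subset \R^m\times \R^{n-1}$, with $(\xi_{i,t})_t$ and $(\xi'_{i,t})_t$ uniformly Lipschitz definable families, and lower-dimensional cells contained in graphs of such functions. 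Letting $\pi:\R^{m+n}\to \R^{m+n-1}$ be the projection forgetting the last coordinate, I apply the inductive hypothesis to $\pi(A)$, together with every $D_i$, to obtain a partition $\Pa$ of $\R^m$ along which $\pi(A)$ admits a uniformly bi-Lipschitz trivialization $g_t:\pi(A)_{t_0}\to \pi(A)_t$ mapping each $D_{i,t_0}$ onto $D_{i,t}$.

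Fix $B\in\Pa$ and $t_0\in B$ and build the trivialization $H_t:A_{t_0}\to A_t$ band-by-band by the piecewise affine rule
$$H_t(x',y):=\Bigl(g_t(x'),\ \xi_{i,t}(g_t(x'))+\rho_{i,t}(x')\bigl(y-\xi_{i,t_0}(x')\bigr)\Bigr), \quad \rho_{i,t}(x'):=\frac{\xi'_{i,t}(g_t(x'))-\xi_{i,t}(g_t(x'))}{\xi'_{i,t_0}(x')-\xi_{i,t_0}(x')},$$
with the natural definition for points lying on separating graphs. Definability and continuity of $H_t$ follow from the band structure and from the fact that adjacent bands share their common graph.

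The main obstacle is to show, after a final refinement of $\Pa$, that $(H_t)_{t\in B}$ is uniformly bi-Lipschitz; the slopes of $\xi_{i,t}$ are uniformly bounded by regularity, and $g_t$ is uniformly bi-Lipschitz by induction, so the content is a two-sided uniform bound on the vertical ratios $\rho_{i,t}(x')$, on the analogous ratios coming from two adjacent bands meeting along a common graph, and on the cross-terms $(y-\xi_{i,t_0}(x'))\,\partial_{x'}\rho_{i,t}$. To obtain these, I would invoke Lemma \ref{lem_eq_dist_cor_th_prep} on the nonnegative definable widths $\xi'_{i,t}-\xi_{i,t}$ and on the differences $\xi_{j,t}-\xi_{i,t}$ of consecutive graphs: after refining $\Pa$, each such function factorizes on each piece as $\prod_\ell d(x',W_{\ell,t})^{\alpha_\ell}$ with fixed exponents $\alpha_\ell\in\mathscr{F}$ and fixed definable sets $W_\ell$ — this is precisely where polynomial boundedness, via the preparation theorem, is essential. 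A further application of the same lemma to the distance functions $d(g_t(\cdot),W_{\ell,t})$, combined with the uniform bi-Lipschitzness of $g_t$, ensures $d(g_t(x'),W_{\ell,t})\sim d(x',W_{\ell,t_0})$ uniformly in $x'$ and $t$, so that all ratios are uniformly comparable to $1$ and the corresponding cross-term derivatives are bounded. This yields the required uniform bi-Lipschitz triviality of $H_t$ on each $B\in\Pa$ and closes the induction.
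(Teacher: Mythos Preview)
Your overall strategy matches the paper's: induct on $n$, use Theorem~\ref{thm_proj_reg_hom_pres_familles} to make $e_n$ regular for $(\delta A_t)_t$, write $A$ as a union of bands between uniformly Lipschitz graphs over a cell decomposition of $\R^{m+n-1}$, apply the inductive hypothesis on the base, and lift piecewise-affinely in the last coordinate, controlling the vertical ratios $\rho_{i,t}$ via Lemma~\ref{lem_eq_dist_cor_th_prep}. The formula you write for $H_t$ is exactly the one the paper uses.

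The gap is in the order of operations. You apply the inductive hypothesis to $\pi(A)$ and the band-bases $D_i$ \emph{before} invoking Lemma~\ref{lem_eq_dist_cor_th_prep}, so the sets $W_\ell$ the lemma produces are not among the sets your trivialization $g_t$ is required to preserve. Hence there is no reason that $g_t(W_{\ell,t_0})=W_{\ell,t}$, and without this the comparison $d(g_t(x'),W_{\ell,t})\sim d(x',W_{\ell,t_0})$ does not follow from bi-Lipschitzness of $g_t$: the latter only gives $d(g_t(x'),g_t(W_{\ell,t_0}))\sim d(x',W_{\ell,t_0})$, and $g_t(W_{\ell,t_0})$ need have nothing to do with $W_{\ell,t}$. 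Your proposed patch, a further application of the lemma to $d(g_t(\cdot),W_{\ell,t})$, just introduces new sets and exponents and does not close the loop. The fix, which is precisely what the paper does, is to apply Lemma~\ref{lem_eq_dist_cor_th_prep} to the gaps $\xi_{i+1}-\xi_i$ \emph{first}, refine the cell decomposition of $\R^{m+n-1}$ so that each $W_\ell$ is a union of cells, and only then apply the inductive hypothesis, requiring the base trivialization to preserve every cell. Then $h_t(W_{\ell,t_0})=W_{\ell,t}$ by construction, bi-Lipschitzness of $h_t$ gives the distance comparison immediately, and the width equivalence $(\xi_{i+1,t}-\xi_{i,t})\circ h_t\sim(\xi_{i+1,t_0}-\xi_{i,t_0})$ follows. (Once that equivalence holds, the cross-terms you mention are automatically bounded, since $|y-\xi_{i,t_0}(x')|\le\xi_{i+1,t_0}(x')-\xi_{i,t_0}(x')$ and all the $\xi_i$'s and $g_t$ are uniformly Lipschitz; no extra appeal to the lemma is needed there.)
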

\begin{proof}
We prove the result by induction on $n$. We shall show that the trivialization $H$ may be required to induce a trivialization of some given definable subsets of $A$. 

Let $A\in \s_{m+n}$ and let $C_1,\dots,C_k$ be some definable subsets of $A$. Apply Theorem \ref{thm_proj_reg_hom_pres_familles} to the set $\{(t,x):x \in \delta A_t\cup \bigcup_{i=1}^k \delta C_{i,t} \}$. This provides  a definable family of   bi-Lipschitz maps $G_t:\R^n \to \R^n$, $\tim$, such that $e_n$ is regular for the families of sets   $(\delta G_t(C_{i,t}))_{\tim}$, $i=1,\dots,k$, and  $(\delta G_t(A_t))_{\tim}$. 

 As we can work up to a family of bi-Lipschitz maps, we will identify $G_t$ with the identity map. By Propositions \ref{pro_extension_fonction_lipschitz}, \ref{pro_proj_reg_decomposition_en_graphes}, and \ref{pro_famille_lipschitz_ordonnees}, we can find some  definable  functions $\xi_{1}\le \dots\le \xi_{s}$ on $ \R^{m+n-1}$, with $(\xi_{i,t})_{\tim}$ uniformly Lipschitz for all $i$, and a cell decomposition $\D$ of $\R^{m+n-1}$ such that $A$ and the $C_j$'s are unions of some graphs $\Gamma _{\xi_{i|D}}$, $i\in \{1,\dots,s\}$, $D \in \D$, or bands $(\xi_{i|D},\xi_{i+1|D})$, $i\in \{0,\dots, s\}$, $D \in \D$ (where $\xi_0\equiv -\infty$ and $\xi_{s+1}\equiv+\infty$).

Refining the cell decomposition $\D$ if necessary (without changing notations), we can assume it to be compatible with the zero loci of the functions $(\xi_{i+1}-\xi_i)$. By Lemma \ref{lem_eq_dist_cor_th_prep}, up to an extra refinement of the cell decomposition, we can assume that there are finitely many definable subsets $W_1,\dots,W_c$ of $\R^m \times \R^{n-1}$ such that on every cell we can find  $r_1,\dots,r_c$ in $\mathscr{F} $ such that for all $i=1,\dots,s-1$ and any $\tim$:
\begin{equation}\label{eq_W_i_preuve_hardt}\xi_{i+1,t}(x)-\xi_{i,t}(x) \sim d(x,W_{1,t})^{r_1}\cdots d(x,W_{c,t})^{r_c}.\end{equation}
Refining one more time the cell decomposition $\D$, we may assume that the $W_i$'s are unions of cells. 

Applying now the induction hypothesis to the cells  of $\D$ provides a partition $\Pa$.  Fix $B \in \Pa$
and let $H(t,x)=(t,h_t(x))$ denote the obtained trivialization of $B \times \R^{n-1}$ along $B$.  We have $h_t(C_{t_0})= C_t$ for some $t_0\in B$ and for all $C\in\D$. We are going to lift the isotopy $H$ to an isotopy of $B\times \R^n$.

Given a point $(t,x)\in B \times \R^{n-1}$ and $1\le i \le s-1$ let  $$\widetilde{H}(t,x,\nu \xi_{i,t_0}(x)+(1-\nu)\xi_{i+1,t_0}(x)):=(t,h_t(x),\nu\xi_{i,t}(h_t(x))+(1-\nu)\xi_{i+1,t}(h_t(x)),$$
for all $\nu \in [0,1]$. Set also for $\nu \in (0,\infty)$:
$$\widetilde{H}(t,x,\xi_{1,t_0}(x)-\nu):= (t,h_t(x),\xi_{1,t}(h_t(x))-\nu),$$
as well as
$$\widetilde{H}(t,x,\xi_{s,t_0}(x)+\nu):= (t,h_t(x),\xi_{s,t}(h_t(x))+\nu).$$
 Because $\D$ is compatible with the zero loci of the functions $(\xi_{i+1}-\xi_i)$  and since the trivialization $h$ was required to preserve the cells of $\D$, it is easily seen that $\widetilde{H}_t$ is a continuous mapping for each $\tim$. Observe also that, since the $W_i$'s  are unions of cells of $\D$, we have  $h_t(W_{i,t_0})=W_{i,t}$, for all $i$. Since $h_t$ is bi-Lipschitz for every $t\in B$, we can derive from (\ref{eq_W_i_preuve_hardt}), that for each $t \in B$ we have:
$$(\xi_{i+1,t}-\xi_{i,t})\circ h_t\sim (\xi_{i+1,t_0}-\xi_{i,t_0}).$$
This shows the bi-Lipschitzness of $\widetilde{H}_t$ on the sets $[\xi_{i,t|D_t},\xi_{i+1,t|D_t}]$, $D \in \D$, $D \subset B$, $i<s$. The bi-Lipschitzness of $\widetilde{H}_t$ on  the sets $(-\infty, \xi_{1,t|D_t})$ and  $(\xi_{s,t|D_t},+\infty)$ is clear since the  $(\xi_{i,t})_{t\in B}$  are families of  Lipschitz functions.

 The continuity of $H_t$ and $H_t^{-1}$ with respect to $t$ follows from a well-known fact, up to an extra refinement of the partition of the parameter space \cite[Lemma 5.17 and Exercise 5.21]{costeomin}.
  \end{proof}

\begin{rem}\label{rem thm princ}
   We have proved a stronger statement since the
isotopy is also  defined on the ambient space $B\times \R^n $. We can also
require the isotopy to preserve a finite number of given definable
 subfamilies of $A$.
\end{rem}

In Theorem \ref{hardt},  the constructed trivialization $h_t$ is Lipschitz for every $t$ (see Definition \ref{dfn_trivialite_bilip}).  The Lipschitz condition  may also be required to hold with respect
to the parameter $t$ on relatively compact sets, as it  will be established by  Proposition
\ref{pro_hardt_avec_parametres}, which requires the following proposition.

\begin{pro}\label{pro_lipschitz_parametres}
 Let $A \in \s_{m+n}$ and let $f_t:A_t \to \R$ be a definable family of functions. If $f_t$ is Lipschitz for all $\tim$ then there exists a definable partition $\Pa$ of $\R^m$ such that for every $B \in \Pa$,   $f:A\to  \R$, $(t,x)\mapsto f_t(x)$ induces a Lipschitz function on  $A\cap K$, for every compact subset $K$ of  $ B\times \R^n$.
\end{pro}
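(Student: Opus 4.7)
The plan is to argue by induction on $m$, reducing the problem (via $C^1$ cell decomposition and Theorem \ref{thm_kp}) to a local boundedness statement for $|\nabla f|$ on compact subsets of $B \times \R^n$, and then handling the $x$- and $t$-partial derivatives separately.

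First, I would apply a definable $C^1$-cell decomposition of $\R^{m+n}$ adapted to $A$ and to the graph of $f$, and project the resulting cells to obtain an initial partition of $\R^m$. This reduces us to the case where $A$ is a $C^1$ cell, $f$ is $C^1$ on $A$, and $B$ is a cell of $\R^m$. Next, applying Theorem \ref{thm_kp} both to $A$ as a family over $\R^m$ and to $A$ viewed as a subset of $\R^{m+n}$ with trivial parametrization (i.e.\ with $m=0$ in that theorem), I may further assume that the inner and outer metrics of $A$ and of each fiber $A_t$ are equivalent, with constants depending only on $m+n$. Under these assumptions, $f$ is Lipschitz on $A \cap K$ whenever $|\nabla f|$ is bounded on $A \cap K$. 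The $x$-partial derivatives are controlled pointwise by $L(t) := L_{f_t}$, a definable function of $t$; a further refinement of the partition of $\R^m$ ensures $L$ is continuous, hence bounded on every compact subset of each $B$.

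The main obstacle is the control of the $t$-partial derivatives $\partial_{t_i} f$. To handle this, consider the definable \emph{blow-up set}
\begin{equation*}
\Sigma := \bigl\{ (t_0,x_0) \in cl(A) : \limsup_{A \ni (t,x) \to (t_0,x_0)} |\nabla_t f(t,x)| = +\infty \bigr\}.
\end{equation*}
By the curve selection lemma, $\Sigma$ coincides with the set of limits of definable arcs in $A$ along which some $|\partial_{t_i} f|$ diverges. Since $f$ is $C^1$ on $A$, we have $\Sigma \subset cl(A)\setminus A$. The plan is to refine the partition of $\R^m$ so that $\pi_m(\Sigma)$ is a union of strata of strictly smaller dimension than the ambient cells of $\Pa$: on a top-dimensional $B$, no compact $K \subset B\times \R^n$ meets $\Sigma$, so $|\nabla_t f|$ remains bounded on $A\cap K$; on a lower-dimensional $B$, reparametrizing $B$ as $\R^{\dim B}$ strictly decreases the parameter dimension and the induction hypothesis applies.

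The hard part will be the dimension estimate on $\pi_m(\Sigma)$: although $\dim\Sigma < \dim A$, the projection could a priori pick up dimension. The key observation is that the Lipschitz constraint on each fiber $f_t$ prevents the ``vertical'' direction $e_{m+n+1}$ from being tangent to $\Gamma_f$ along arcs staying within a single fiber, so the only way $|\nabla_t f|$ can blow up at $(t_0,x_0)$ is along arcs in $A$ on which the $t$-coordinate itself moves; combining this with the uniform metric equivalence from Theorem \ref{thm_kp} and a tangential analysis of $\Gamma_f$ near $\Sigma$, one should be able to show that $\pi_m(\Sigma) \cap B$ is a proper definable subset of $B$ for each top-dimensional $B$ of the refined partition, giving the strict dimension drop that drives the induction.
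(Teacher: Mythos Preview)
Your overall strategy---induction on $m$, control of $x$-derivatives via continuity of $t\mapsto L_{f_t}$, a bad set where $t$-derivatives blow up, a dimension estimate on its projection, and Theorem~\ref{thm_kp} to pass from bounded gradient to Lipschitz---is exactly the paper's. Two remarks.

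First, the paper simplifies the setup differently: rather than cutting $A$ into $C^1$ cells, it uses Remark~\ref{rem_extension_familles_fonctions_lipschitz} to extend each $f_t$ to an $L_{f_t}$-Lipschitz function on all of $\R^n$, so one may assume $A=\R^{m+n}$ from the outset. The bad set is then defined on the graph $\Gamma_f\subset\R^m\times\R^{n+1}$ as the set $Z$ of limits of regular points $q_k$ with $(0_{\R^m},e_{n+1})\in\lim T_{q_k}(\Gamma_f)_{reg}$; this is equivalent to your $\Sigma$ but placed in the graph rather than in $cl(A)$.

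Second, and more importantly, your heuristic for $\dim\pi_m(\Sigma)<m$ is not yet a proof, and the actual mechanism is not quite the one you describe. The observation that ``$|\nabla_t f|$ can only blow up along arcs where $t$ moves'' does not by itself bound $\dim\pi_m(\Sigma)$: such arcs could in principle accumulate over every $t_0\in B$. The paper argues by contradiction via a Verdier $(w)$-regular stratification of $\Gamma_f$ compatible with $Z$. If $\dim\pi(Z)=m$, pick a stratum $S\subset Z$ with $\dim\pi(S)=m$; by Sard there is $q\in S$ where $\pi_{|S}$ is a submersion, so $T_qS$ is transverse to $\{0_{\R^m}\}\times\R^{n+1}$. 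The $(w)$ condition forces any limit tangent $\tau_q=\lim T_{q_k}(\Gamma_f)_{reg}$ to contain $T_qS$, hence $\tau_q$ is transverse to $\{0_{\R^m}\}\times\R^{n+1}$ as well. But transversality makes the intersection with the limit equal to the limit of the intersections, and the latter are $\lim T_{x_k}\Gamma_{f_{t_k}}$, which cannot contain $e_{n+1}$ since $L_{f_t}$ is locally bounded. This contradicts $(0_{\R^m},e_{n+1})\in\tau_q$. It is this transversality-versus-fiberwise-Lipschitz tension, mediated by the $(w)$ condition, that is the missing idea in your sketch; the appeal to Theorem~\ref{thm_kp} plays no role at this step.
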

\begin{proof} We prove the result by induction on $m$. The case $m=0$ being vacuous, assume the result to be true for $(m-1)$, $m\ge 1$. By Proposition \ref{pro_extension_fonction_lipschitz} (see Remark \ref{rem_extension_familles_fonctions_lipschitz}), we may assume that $A= \R^{m+n}$.  It is well-known that there is  a definable partition $\Pa$ of the parameter space, such that  $f$ is continuous on every $B\times \R^n$, $B\in \Pa$ (again, see  \cite[Lemma 5.17 and Exercise 5.21]{costeomin}). Fix an element $B\in \Pa$ (we shall refine several times the partition $\Pa$). 

 We start with the (easier) case where $\dim B <m$. In this case, there is a partition of $B$ such that every element of this partition has a regular vector (using for instance Remark \ref{rem_alpha_flat}), that, without loss of generality, we can assume to be $e_m\in \sph^{m-1}$.   Thanks to Proposition \ref{pro_proj_reg_decomposition_en_graphes}, it is therefore enough to deal with the case where $B$ is the graph of a Lipschitz function, say $\xi:D \to \R$, $D \in \s_{m-1}$. The result in this case now follows from the induction hypothesis applied to the function $D \times \R^n \ni (t,x) \mapsto f(t,\xi(t),x)$.

 We now address the case  $\dim B=m$. The function $B\ni t \mapsto L_{f_t}$ being definable, partitioning $B$ if necessary, we  can assume this function to be continuous on this set. In particular, it is bounded on compact subsets of $B$.
 Let $Z$ be the set of points  $q \in \Gamma_f$ for which there exists a sequence $q_k\in (\Gamma_f)_{reg}$ tending to $q$ such that   $$(0_{\R^m},e_{n+1}) \in\lim T_{q_k} (\Gamma_f)_{reg},$$ where $e_{n+1}$ is the last vector of the canonical basis of $\R^{n+1}$. Let $\pi :\R^m \times \R^{n+1} \to \R^m$ denote the projection omitting the last $(n+1)$ coordinates. We claim that $\pi(Z)$ has dimension less than $m$. 

 Assume otherwise. Take a $(w)$-regular stratification of $\Gamma_f$ compatible with $Z$   and  let $S\subset Z$  be a stratum such that $\pi(S)$ has dimension $m$. Let $S'$ be the set of points of $S$ at which $\pi_{|S}$ is a submersion. Since $\pi(S)$ is of dimension $m$, by Sard's Theorem,  the set $S'$ cannot be empty. Moreover, by definition of $S'$, $T_q S'$ is transverse to $\{0_{\R^m}\}\times \R^{n+1}$   at any point $q$ of $S'$.

Let $q \in S'\subset Z$. By definition of $Z$,  there is a sequence $q_k$ tending to $q$ such that $(0_{\R^m},e_{n+1}) \in \tau_q:=\lim T_{q_k} (\Gamma_f)_{reg}$.  The $(w)$ condition ensures that $\tau_q\supset T_q S'$ ($S'$ is a manifold for it is open in $S$). Consequently, $\tau_q$ is transverse to $\{0_{\R^m}\}\times \R^{n+1}$ as well.

 But since $L_{f_t}$ is locally bounded (it was assumed to be continuous),  the vector $e_{n+1}$ does not belong to $\lim T_{x_k}\Gamma_{f_{t_k}}$, if $q_k=(t_k,x_k)$ in $\R^m \times \R^{n+1}$, which means that 
 $$(\lim T_{q_k}\Gamma_f) \cap\{0_{\R^m}\}\times \R^{n+1}\neq \lim  \big{(}T_{q_k}\Gamma_f \cap\{0_{\R^m}\}\times \R^{n+1}\big{)} $$
(since the latter  does not contain the vector  $(0_{\R^m},e_{n+1})$ while the former does), and hence, that $\tau_q$ cannot be transverse to $\{0_{\R^m}\}\times \R^{n+1}$ (since otherwise the intersection with the limit would be the limit of the intersection). A contradiction.

This establishes that $\dim \pi(Z)<m$. Since we can refine $\Pa$ into a partition which is compatible with $\pi(Z)$, we thus see that we can suppose $B \subset\R^m \setminus \pi(Z)$  (we are dealing with the case $\dim B=m$).

  For $(t,R)\in (\R^m \setminus \pi(Z)) \times [0,+\infty)$ set:   $$\varphi(t,R):=\sup\{\frac{\pa f}{\pa t}(t,x):x\in \bou(0_{\R^n},R), \mbox{ $f$ is $\cc^1$ at $x$}   \}$$  (which is finite, by definition of $Z$, since $L_{f_t}$ is bounded).
  As $\varphi$ is definable,  up to a partition of $B$, this function  may be assumed to be continuous (and thus bounded on compact sets)  for $R \ge \zeta(t)$, with $\zeta:B\to \R$ definable function.   The function $f$ therefore induces a function which is Lipschitz with respect to the inner metric on every compact set of $B\times \R^n$.
 By Theorem \ref{thm_kp}, up to an extra refinement partition,  we can suppose that the inner  metric and the outer metric of $B$ are equivalent, which means that so are the inner and outer metrics of $B\times \R^n$, establishing that $f$ is Lipschitz on every compact set of $B\times \R^n$. 
\end{proof}

As a matter of fact, the trivialization given by Theorem \ref{hardt} may be required to  satisfy the Lipschitz condition with respect to the  parameters on compact  sets:

\begin{pro}\label{pro_hardt_avec_parametres}
Let $A \in \s_{m+n}$. Refining the partition   provided by 
 Theorem \ref{hardt}, we may obtain the following extra fact: for any element $B$ of this partition, the trivialization $H:B\times A_{t_0} \to A_{B} $, $(t,x) \mapsto (t,h_t(x))$, induces a bi-Lipschitz mapping on  $ (B\times A_{t_0})\cap K$, for every compact subset $K$ of  $ B\times \R^n$.
\end{pro}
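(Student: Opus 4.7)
The plan is to combine the uniformly bi-Lipschitz trivialization provided by Theorem \ref{hardt} with Proposition \ref{pro_lipschitz_parametres} applied to its coordinate functions, so as to upgrade the Lipschitz condition from ``in $x$, uniformly in $t$'' to ``in $(t,x)$ on compact sets''.

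First I would apply Theorem \ref{hardt} to $A$ to obtain a partition $\Pa$ of $\R^m$ and, on each element $B\in \Pa$, a trivialization $H(t,x)=(t,h_t(x))$ with $h_t$ and $h_t^{-1}$ both $L$-Lipschitz for every $t\in B$ and some $L\in \R$ independent of $t$. Viewing $h$ as a definable map $B\times A_{t_0}\to \R^n$, each coordinate of $h$ is a definable family of $L$-Lipschitz functions of $x$. Proposition \ref{pro_lipschitz_parametres} applied componentwise then gives, after a refinement of $\Pa$, that $h$ itself is Lipschitz on every compact subset of $B\times \R^n$. Similarly, extending each $h_t^{-1}:A_t\to A_{t_0}$ to an $L$-Lipschitz definable function on $\R^n$ (using Proposition \ref{pro_extension_fonction_lipschitz} and Remark \ref{rem_extension_familles_fonctions_lipschitz}) and applying Proposition \ref{pro_lipschitz_parametres} once more yields, after a further refinement, that $h^{-1}$ is Lipschitz on every compact subset of $B\times \R^n$ as well.

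Once these two facts are in hand, deducing bi-Lipschitzness of $H$ on $(B\times A_{t_0})\cap K$ is routine. Given two points $(t,x)$ and $(t',x')$ in that set, one writes
$$h_t(x)-h_{t'}(x')=\big(h_t(x)-h_{t'}(x)\big)+\big(h_{t'}(x)-h_{t'}(x')\big),$$
bounding the first summand by the Lipschitz constant of $h$ on $K$ times $|t-t'|$ and the second by $L|x-x'|$, which gives a linear bound on $|H(t,x)-H(t',x')|$ in terms of $|(t,x)-(t',x')|$. The analogous argument applied to $H^{-1}$ on the compact set $H(K\cap (B\times A_{t_0}))\subset B\times \R^n$ provides the bi-Lipschitz estimate in the other direction.

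The main obstacle is not really a substantive one, since Proposition \ref{pro_lipschitz_parametres} does all the hard work. The only point requiring a little care is that the source $A_t$ of $h_t^{-1}$ varies with $t$, so one should first extend $h_t^{-1}$ definably to all of $\R^n$ before invoking the proposition; the extension provided by Proposition \ref{pro_extension_fonction_lipschitz} handles this uniformly in $t$ and preserves the Lipschitz constant $L$, thanks to Remark \ref{rem_extension_familles_fonctions_lipschitz}.
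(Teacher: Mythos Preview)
Your approach is the same as the paper's, which simply states that the result follows from Theorem \ref{hardt} and Proposition \ref{pro_lipschitz_parametres}; you have essentially unpacked that one-line proof correctly.

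Two small remarks are in order. First, Theorem \ref{hardt} does \emph{not} guarantee that the Lipschitz constants of $h_t$ and $h_t^{-1}$ are bounded independently of $t$: the constants in (\ref{eq_W_i_preuve_hardt}) depend on $t$ (see the remark after Lemma \ref{lem_eq_dist_cor_th_prep}), and hence so does the bi-Lipschitz constant of the trivialization. Fortunately this does not matter, since Proposition \ref{pro_lipschitz_parametres} only assumes that $f_t$ is Lipschitz for each $t$, with no uniformity. Second, and relatedly, your final decomposition argument is superfluous (and slightly flawed, since $(t',x)$ need not lie in $K$): once Proposition \ref{pro_lipschitz_parametres} tells you that $(t,x)\mapsto h_t(x)$ is Lipschitz on $K$, you immediately get $|H(t,x)-H(t',x')|\lesssim |(t,x)-(t',x')|$ without splitting anything. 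Your treatment of the inverse via the definable Lipschitz extension of Remark \ref{rem_extension_familles_fonctions_lipschitz} is a clean way to handle the moving domain $A_t$.
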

\begin{proof}
 This is a consequence of Theorem \ref{hardt} and Proposition \ref{pro_lipschitz_parametres}.
\end{proof}

The compactness assumption is essential, as shown by the following example.
\begin{exa}\label{ex lips param a  l infini}
Consider the  set  $A=\{(t,x,y) \in  \R^3 :
y=tx\}$. By Theorem \ref{hardt}, this set is bi-Lipschitz trivial along a right-hand-side neighborhood of zero in $\R$. However, it is easy to check that we could not
 require a trivialization $H(t,x,y)$ to be  bi-Lipschitz   with
respect the parameter $t$, even along a compact interval (i.e., we have to require that $x$ and $y$ also remain in a compact set in order to ensure Lipschitzness with respect to $t$).
\end{exa}

The inconvenience of bi-Lipschitz triviality theorems that are provided by integration of Lipschitz vector fields, such as the bi-Lipschitz version of Thom-Mather isotopy theorem that holds on Mostowski's Lipschitz stratifications \cite{most, parusub}, is that they do not provide definable trivializations. Theorem \ref{hardt} enables us to construct stratifications that are definably bi-Lipschitz trivial along the strata. 

In the definition below, we  write smooth without specifying the degree of smoothness. It is well-known that one can construct $\cc^k$ stratifications, for every given $k$. When the structure has $\cc^\infty$ cell decomposition, we can construct stratifications that have $\cc^\infty$ strata. By ``smooth retraction'', we mean a retraction that extends to a smooth mapping on a neighborhood of $V_S$ in $\R^n$.  Definable $\cc^k$ manifolds admit definable $\cc^{k-1}$ tubular neighborhoods \cite{costeomin}.
\begin{dfn}\label{dfn_strat}
 A  stratification $\Sigma$ of a set $X$ is {\bf locally definably bi-Lipschitz trivial}\index{locally definably bi-Lipschitz trivial stratification} if for every $S\in \Sigma$, there are an open neighborhood $V_S$ of $S$ in $X$ and a smooth definable retraction $\pi_S:V_S\to S$  such that every $x_0\in S$ has an open neighborhood $W$ in $S$ for which there is a definable bi-Lipschitz homeomorphism  $$\Lambda:\pi_S^{-1}(W)\to \pi_S^{-1}(x_0) \times W, $$ satisfying:
  \begin{enumerate}[(i)]
   \item  $\pi_S(\Lambda^{-1}(x,y))= y$, for all $(x,y)\in \pi_S^{-1}(x_0)\times  W$.
   \item   $\Sigma_{x_0}:=\{  \pi_S^{-1}(x_0)\cap Y:Y\in \Sigma\} $ is a stratification of $ \pi_S^{-1}(x_0)$,  and $\Lambda(\pi_S^{-1}(W)\cap Y)=(\pi_S^{-1}(x_0)\cap Y)\times W$, for all $Y\in \Sigma$.
  \end{enumerate}
\end{dfn}
 
We now can draw the following consequence of our definable bi-Lipschitz triviality theorem, valuable for applications \cite{trace,lprime}:
\begin{cor}\label{cor_strat_bil_triv}
 Given a definable set $X$, we can find a stratification of this set which is locally definably bi-Lipschitz trivial. This stratification may be required to be compatible with finitely many given definable subsets of $X$.
\end{cor}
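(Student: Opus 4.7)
The plan is to obtain the stratification $\Sigma$ by starting from a definable smooth Whitney stratification of $X$ compatible with the given subsets (which exists by \cite{taleloi, livre}) and iteratively refining it, using Theorem \ref{hardt} to split each stratum into pieces over which a chosen tubular retraction admits a definable bi-Lipschitz trivialization.

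First, for each stratum $S$ of the initial stratification $\Sigma_0$, I would choose a definable smooth tubular neighborhood of $S$ in $\R^n$ (which exists for every definable smooth submanifold, see \cite{costeomin}) and restrict its retraction to $X$ to obtain a smooth definable retraction $\pi_S\colon V_S\to S$ on an open neighborhood $V_S$ of $S$ in $X$. Through finitely many definable smooth charts identifying pieces of $S$ with open subsets of $\R^{\dim S}$, I would view $V_S$ as a definable family over the parameter $y\in S$, with subfamilies $V_S\cap Y$ indexed by the other strata $Y\in\Sigma_0$. Applying Theorem \ref{hardt}, together with Remark \ref{rem thm princ} to ensure compatibility with those subfamilies, would produce a definable finite partition $\Pa_S$ of $S$ such that over each piece $B\in\Pa_S$ the family $(\pi_S^{-1}(y)\cap X)_{y\in B}$ admits a definable bi-Lipschitz trivialization of the form $(y,x)\mapsto (y,h_y(x))$ respecting every subfamily $V_S\cap Y$.

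Next, I would refine $\Sigma_0$ to a smooth Whitney stratification compatible with every $\Pa_S$, $S\in\Sigma_0$, and then iterate the construction for the newly created strata of strictly lower dimension than their ancestor, assigning each its own tubular retraction and Hardt partition. Proceeding by induction on the dimension of the strata, processed in increasing order, each refinement step only produces strata of dimension at most that of the stratum being refined, and the top-dimensional pieces are open in the ancestor and therefore need no further subdivision; this guarantees termination and yields a smooth Whitney stratification $\Sigma$ in which every stratum $S$ lies in a single open piece of its associated $\Pa_S$. To verify Definition \ref{dfn_strat}, given $S\in\Sigma$ and $x_0\in S$ I would choose an open neighborhood $W\subset S$ of $x_0$ contained in a single chart and in the open piece of $\Pa_S$ containing $S$; the Hardt trivialization over $W$, composed with the factor swap, yields a definable bi-Lipschitz homeomorphism $\Lambda\colon \pi_S^{-1}(W)\to \pi_S^{-1}(x_0)\times W$, whose property (i) is immediate from the parameter-preserving form $(y,x)\mapsto(y,h_y(x))$ and whose property (ii) follows from the compatibility of the trivialization with the subfamilies $\pi_S^{-1}(\cdot)\cap Y$. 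The main obstacle will be the bookkeeping of the iterative refinement, namely ensuring simultaneously that newly refined strata are smooth submanifolds (handled by taking Whitney refinements at each step), that a definable smooth retraction is always available for them (handled by the existence of definable tubular neighborhoods), and that the iteration terminates (handled by the dimension induction just described).
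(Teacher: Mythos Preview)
Your overall strategy matches the paper's: exhibit local definable bi-Lipschitz triviality as a ``stratifying condition'' by applying Theorem~\ref{hardt} to the fibers of a tubular retraction, then iterate the standard stratification construction. However, there is a genuine gap.

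Definition~\ref{dfn_strat} requires $\Lambda:\pi_S^{-1}(W)\to\pi_S^{-1}(x_0)\times W$ to be a bi-Lipschitz homeomorphism \emph{as a single map}, i.e.\ Lipschitz jointly in the fiber and base variables. Theorem~\ref{hardt} (see Definition~\ref{dfn_trivialite_bilip}) only guarantees that each $h_y$ is bi-Lipschitz for every fixed parameter $y$; the Lipschitz constants may blow up as $y$ varies, and there is no control on the dependence $y\mapsto h_y(x)$. Example~\ref{ex lips param a  l infini} illustrates exactly this failure. The paper closes this gap by invoking Proposition~\ref{pro_lipschitz_parametres} (equivalently Proposition~\ref{pro_hardt_avec_parametres}), which upgrades the trivialization to a bi-Lipschitz map on compact subsets of $B\times\R^n$ after a further refinement of the partition. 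You need to include this step: refine each $\Pa_S$ once more via Proposition~\ref{pro_hardt_avec_parametres}, and then choose $W$ relatively compact in the resulting piece, with $V_S$ shrunk so that $\pi_S^{-1}(W)$ sits inside a compact set.

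A secondary point you do not address is why $\Sigma_{x_0}=\{\pi_S^{-1}(x_0)\cap Y:Y\in\Sigma\}$ consists of manifolds. The paper handles this by imposing Whitney's $(a)$ condition, which ensures that each stratum $Y$ is transverse to the fibers of $\pi_S$ near $S$; your Whitney refinements give you this, but you should say so explicitly.
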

\begin{proof}This follows from standard arguments of construction of stratifications.
 Theorem \ref{hardt} and Proposition \ref{pro_lipschitz_parametres} yield that local definable bi-Lipschitz triviality holds generically, which is sometimes rephrased by saying that it is a stratifying condition (see for instance \cite[Proposition 2.7.5]{livre} for more details). We can require our stratification to satisfy Whitney's $(a)$ condition (which is also a stratifying condition \cite{bcr, taleloi, livre}), which yields that $\Sigma_{x_0}$ (in $(ii)$ of Definition \ref{dfn_strat}) exclusively consists of manifolds.
\end{proof}

\end{section}


\begin{thebibliography}{mmm}
\bibitem{bfg} L.  Birbrair, A. Fernandes, V. Grandjean, Thick-thin decomposition for real definable isolated singularities, Indiana Univ. Math. J. 66 (2017), no. 2, 547--557.
 \bibitem{bfgo}L.  Birbrair, A. Fernandes, V. Grandjean, D. O'Shea,  Choking horns in Lipschitz geometry of complex algebraic varieties, Appendix by Walter D. Neumann. J. Geom. Anal. 24 (2014), no. 4, 1971--1981.

 \bibitem{bgg}  L.  Birbrair, A. Fernandes, V. Grandjean, A. Gabrielov,  Lipschitz contact equivalence of function germs in $\R^2$, Ann. Sc. Norm. Super. Pisa Cl. Sci. (5) 17 (2017), no. 1, 81--92.
 \bibitem{bnp}L.  Birbrair, W. D. Neumann, A. Pichon, The thick-thin decomposition and the bilipschitz classification of normal surface singularities, Acta Math. 212 (2014), no. 2, 199--256.


\bibitem{bcr}J. Bochnak, M. Coste,  M.-F. Roy, G\'eom\'etrie alg\'ebrique r\'eelle, Springer 1987.

\bibitem{costeomin} M. Coste, An introduction to o-minimal geometry, Dip. Mat. Univ. Pisa, Dottorato di Ricerca in Matematica, Istituti Editoriali e Poligrafici Internazionali, Pisa 2000.

\bibitem{wt}M. Czapla,	
Definable triangulations with regularity conditions, Geom. Topol. vol. 16 (2012), 2067--2095.

\bibitem{depauw} T. de Pauw, R. Hardt, Linear isoperimetric inequality for normal and integral currents in compact subanalytic sets,  preprint, 	arXiv:2012.02667.

\bibitem{vdd}L. 
 van den Dries, Tame topology and o-minimal structures, London Mathematical Society Lecture Note Series, 248. Cambridge University Press, Cambridge, 1998. x+180 pp.
 \bibitem{vds} L. van den Dries, P. Speissegger, {O-minimal preparation theorems}, Model theory and applications, 87--116, Quad. Mat., 11, Aracne, Rome, 2002.

\bibitem{guillermou}S. Guillermou, G. Lebeau, A. Parusi\'nski, P. Schapira, J.-P. Schneiders, Subanalytic sheaves and Sobolev spaces. Ast\'erisque No. 383 (2016), xvi+120 pp.

 \bibitem{hardt} R. M. Hardt,
Semi-algebraic local-triviality in semi-algebraic mappings, Amer.
J. Math. 102 (1980), no. 2, 291--302.
\bibitem{kdec}K. Kurdyka,  On a subanalytic stratification
 satisfying a Whitney property with exponent $1$. Real algebraic geometry
 (Rennes, 1991), 316--322, Lecture Notes in Math.,
 1524, Springer, Berlin, 1992.
\bibitem{kp} K. Kurdyka,  A. Parusi\'nski,  Quasi-convex decomposition in o-minimal structures. Application to the gradient conjecture, Advanced Studies in Pure Mathematics Vol. 43 (2006) 137 -- 177.

\bibitem{most} T. Mostowski, Lipschitz Equisingularity, Dissertationes Math., CCXLIII, 1985, PWN, Warszawa.
\bibitem{lipsomin} N. Nguyen, G. Valette, Lipschitz stratifications in o-minimal structures, Ann. Sci. Ecole Norm. Sup. (4) 49,   (2016), no. 2, 399--421.  

\bibitem{nhan} N. Nguyen, Regular projection in o-minimal structures, Preprint.

\bibitem{oudrane} M. Oudrane, Regular projections and regular covers in o-minimal structures, preprint, arXiv:2110.06391.

\bibitem{pdec} A. Parusi\'nski, Regular  projections  for  subanalytic  sets,  C.  R.  Acad.  Sci.  Paris  S\'er.  I, Math. 307 (1988), 343--347. 
\bibitem{parureal}A. Parusi\'nski, {Lipschitz stratification of real analytic sets}, Singularities (Warsaw, 1985), 323--333, Banach Center Publ., 20, PWN, Warsaw, 1988.

\bibitem{parusub}A. Parusi\'nski, Lipschitz stratification of subanalytic sets,
 Ann. Sci. Ecole Norm. Sup. (4) 27 (1994), no. 6, 661--696.

\bibitem{pawlucki} W. Paw\l ucki, 
Lipschitz cell decomposition in o-minimal structures, Illinois J. Math. 52 (2009), no. 3, 1045--1063.

\bibitem{keq}M. A. S. Ruas, G. Valette, $C^0$ and bi-Lipschitz ${\cal K}$-equivalence,   Math. Z.,
 269 (2011), 293--308.

\bibitem{taleloi} Ta L\^e Loi,
Verdier and strict Thom stratifications in o-minimal structures,
Illinois J. Math. 42 (1998), no. 2, 347-356.

\bibitem{poincwirt} A. Valette, G. Valette, Poincar\'e inequality on subanalytic sets, J. Geom. Anal. 31 (2021), 1046--10472.
\bibitem{trace} A. Valette, G. Valette, Trace operators on bounded subanalytic manifolds, arXiv:2101.10701.

\bibitem{vlt} G. Valette, Lipschitz triangulations, Illinois J. Math. 49 (2005), issue 3,
953--979.
\bibitem{link} G. Valette, The link of the germ of a semi-algebraic metric space, Proc. Amer. Math. Soc. 135 (2007), no. 10, 3083--3090.

\bibitem{sullivan}G. Valette, On metric types that are definable in an o-minimal structure, J. Symbolic Logic 73 (2008), no. 2, 439--447.


\bibitem{linfty} G. Valette, $L^\infty$ cohomology is intersection cohomology,   Adv. Math.  231  (2012),  no. 3-4, 1818-1842.

\bibitem{gvpoincare} G. Valette, Poincar\'e duality for $L^p$ cohomology on subanalytic singular spaces,  Math.  Ann, 380 (2021), 789--823.

\bibitem{livre} G. Valette, On subanalytic geometry, survey.  Available at  \href{http://www2.im.uj.edu.pl/gkw/sub.pdf}{http://www2.im.uj.edu.pl/gkw/sub.pdf}

\bibitem{lprime} G. Valette, On Sobolev spaces of bounded subanalytic manifolds, arXiv:2111.12338.


\bibitem{laplac} G. Valette, On the Laplace equation on bounded subanalytic manifolds, arXiv:2208.11931.
\end{thebibliography}
\end{document}